\def\@strippedMR{}
\def\@scanforMR#1#2#3\endscan{%
  \ifx#1M\ifx#2R\def\@strippedMR{#3}%
  \else\def\@strippedMR{#1#2#3}%
  \fi\fi}
\renewcommand\MR[1]{\relax
  \ifhmode\unskip\spacefactor3000 \space\fi
  \@scanforMR#1\endscan
  MR\MRhref{\@strippedMR}{\@strippedMR}}
\newcommand{\C}{\mathbb{C}}
\newcommand{\N}{\mathbb{N}}
\newcommand{\Q}{\mathbb{Q}}
\newcommand{\R}{\mathbb{R}}
\newcommand{\Z}{\mathbb{Z}}
\newcommand{\f}{{\bm{f}}}
\newcommand{\g}{{\bm{g}}}
\newcommand{\h}{{\bm{h}}}
\newcommand{\bfQ}{{\bm{Q}}}
\newtheorem*{mthm}{Main Theorem}
\newtheorem{thm}{Theorem}[section]
\newtheorem{cor}[thm]{Corollary}
\newtheorem{lem}[thm]{Lemma}
\newtheorem{prop}[thm]{Proposition}
\newtheorem*{claim*}{Claim}
\theoremstyle{definition}
\newtheorem*{defn}{Definition}
\newtheorem*{exam}{Example}
\newtheorem*{ackn}{Acknowledgment}
\newtheorem*{question}{Question}
\DeclareMathOperator{\poly}{Poly}
\DeclareMathOperator{\supp}{supp}
\DeclareMathOperator{\crit}{Crit}
\DeclareMathOperator{\per}{per}
\DeclareMathOperator{\Int}{int}
\DeclareMathOperator{\mult}{mult}
\DeclareMathOperator{\attr}{attr}
\DeclareMathOperator{\rep}{rep}
\DeclareMathOperator{\geom}{geom}
\DeclareMathOperator{\sector}{Sector}
\DeclareMathOperator{\Mis}{Mis}
\DeclareMathOperator{\preper}{Preper}
\DeclareMathOperator{\capt}{cap}
\theoremstyle{remark}
\newtheorem{rem}[thm]{Remark}
\renewcommand{\theenumi}{\rm (\roman{enumi})}
\newcommand{\condI}{(C1)}
\newcommand{\condII}{(C2)}
\newcommand{\condIII}{(C3)}
\newcommand{\semiconj}{conjugate by an irreducible holomorphic correspondence}
\newcommand{\Semiconj}{Conjugate by an irreducible holomorphic correspondence}
\title{Combinatorics and topology of straightening maps II:
Discontinuity}
\author{Hiroyuki Inou}
\address{Department of Mathematics, Kyoto University, Kyoto 606-8502, JAPAN}
\date{\today}
\begin{document}
\maketitle

\begin{abstract}
 We continue the study of straightening maps for the family of
 polynomials of degree $d \ge 3$. 
 The notion of straightening map is
 originally introduced by Douady and Hubbard to study relationship between 
 polynomial-like renormalizations and self-similarity of the Mandelbrot set.
 In the quadratic case, straightening maps are always
 continuous, and this is one of the critical steps to prove the
 Mandelbrot set has small copies in itself.

 On the other hand, for higher degree case, 
 we do not have such a nice self-similar property:
 As expected from an example of a cubic-like family with
 discontinuous straightening map by Douady and Hubbard,
 we prove that the straightening map is discontinuous unless it is of
 disjoint type.
\end{abstract}

\section{Introduction}

Consider the family of monic centered polynomials $\poly(d)$ of degree
$d \ge 2$. The \emph{connectedness locus} $\mathcal{C}(d)$ is the
set of $f \in \poly(d)$ having connected filled Julia set $K(f)$.
In the case of degree two,
$\mathcal{M}=\mathcal{C}(2)$ is the well-known Mandelbrot set.
Douady and Hubbard \cite{Douady-Hubbard-poly-like} proved that there
exist infinitely many small copies of the Mandelbrot set in itself.
In fact, for any $z^2+c_0 \in \mathcal{M}$ such that the critical point
$0$ is periodic, there exist a subset $M' \subset \mathcal{M}$ and a homeomorphism $\chi:M' \to \mathcal{M}$ 
such that $c_0 \in M'$, the boundary of $M'$ is contained in that of
$\mathcal{M}$ and $\chi(c_0)=0$ \cite{Haissinsky-Mandelbrot}.

The map $\chi$ above is an example of {\itshape
straightening maps}.
For a family of polynomial-like mappings parameterized by a complex
manifold $\Lambda$ of degree $d \ge 2$, we can define such a map defined
on the connectedness 
locus of $\Lambda$, taking values in the set of affine conjugacy classes
of polynomials in $\mathcal{C}(d)$.

In the preceding paper \cite{Inou-Kiwi-straightening}, 
we consider straightening maps for families of renormalizable
polynomials of degree $d \ge 3$. 
We characterize the combinatorics of a family of renormalizable
polynomials in terms of rational laminations introduced by Thurston
\cite{Thurston}. 
A \emph{rational lamination} $\lambda_f$ for $f \in \mathcal{C}(d)$
is the landing relation of external rays of rational angles.
Let $\lambda_0$ be a post-critically finite $d$-invariant rational
lamination (equivalently, let $\lambda_0$ be the rational lamination of
a post-critically finite polynomial in $\mathcal{C}(d)$),
and let $\mathcal{C}(\lambda_0)=\{f \in \mathcal{C}(d);\
\lambda_f \supset \lambda_0\}$ denote the set of {\itshape
$\lambda_0$-combinatorially renormalizable} polynomials.
For $f \in \mathcal{C}(\lambda_0)$, we say $f$ is {\itshape
$\lambda_0$-renormalizable} if it has a polynomial-like restriction
whose filled Julia sets are \emph{$\lambda_0$-fibers},
which are continua defined in terms of $\lambda_0$. 
For a $\lambda_0$-renormalizable map $f$, 
we can straighten such a restriction ({\itshape
$\lambda_0$-renormalization}) to get a new polynomial by the
straightening theorem by Douady and Hubbard \cite{Douady-Hubbard-poly-like}.
More precisely, since there might exist several critical points,
we borrow the notion of mapping schema introduced by Milnor
\cite{Milnor-hyp} to describe the dynamics of $\lambda_0$-fibers
containing critical points.
Therefore, the straightening of a $\lambda_0$-renormalization of $f$
is an affine conjugacy class of polynomials over a mapping schema
(say, $T(\lambda_0)$) of $\lambda_0$.

Under this definition, the straightening map is at most
finite-to-one.
By introducing ``markings'' for polynomials and polynomial-like maps, we
can define an \emph{injective} straightening map
$\chi_{\lambda_0}:\mathcal{R}(\lambda_0) \to \mathcal{C}(T(\lambda_0))$,
where $\mathcal{C}(T(\lambda_0))$ is the 
\emph{fiberwise connectedness locus of the family $\poly(T(\lambda_0))$ of
monic centered polynomials over $T(\lambda_0)$}.
We recall these notions and results in the preceding paper in
Section~\ref{sec-comb}.

In this paper, we study discontinuity of straightening maps.
\begin{mthm}
 \label{thm-discont}
 Let $d \ge 3$.
 Assume a $d$-invariant post-critically finite rational lamination
 $\lambda_0$ has a non-trivial Fatou critical relation and its
 straightening map $\chi_{\lambda_0}$ has nonempty domain of definition.

 Then $\chi_{\lambda_0}$ is not continuous. 
 More precisely, $\chi_{\lambda_0}$ is not continuous on any
 neighborhood of any Misiurewicz $\lambda_0$-renormalizable polynomial.
\end{mthm}

We say a polynomial $f$ of degree $d \ge 2$ is \emph{Misiurewicz}
if all critical points are (strictly) preperiodic.
Note that since Misiurewicz maps are quasiconformally rigid (moreover,
they are combinatorially rigid), straightening maps are continuous
\emph{at} those parameters.
It is known that the closure of the set of Misiurewicz polynomials 
coincides with the support of bifurcation measure \cite{Dujardin-Favre}.
Therefore, we may also say
that $\chi_{\lambda_0}$ is not continuous on any open set intersecting
the support of the bifurcation measure.

An equivalent condition for the domain to be nonempty is stated in
\cite{Inou-Kiwi-straightening} (see Proposition~\ref{prop-nonempty}).

It is well-known that straightening maps for quadratic-like families are
always continuous \cite{Douady-Hubbard-poly-like}.
Therefore, we have a complete classification:
\begin{cor}
 Let $d \ge 3$ and let $\lambda_0$ be a $d$-invariant post-critically
 finite rational lamination with $\mathcal{R}(\lambda_0) \ne \emptyset$.
 Then the straightening map $\chi_{\lambda_0}:\mathcal{R}(\lambda_0)
 \to \mathcal{C}(T(\lambda_0))$ is continuous if and only if
 $\lambda_0$ is of disjoint type.
\end{cor}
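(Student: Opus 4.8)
I will prove the statement in the following equivalent local form: for every Misiurewicz $\lambda_0$-renormalizable polynomial $f_*$ and every neighborhood $N$ of $f_*$ in $\poly(d)$, the straightening map $\chi_{\lambda_0}$ is discontinuous at some parameter $g\in N\cap\mathcal R(\lambda_0)$. Because Misiurewicz maps are combinatorially, hence quasiconformally, rigid, they are continuity points of $\chi_{\lambda_0}$, so necessarily $g\neq f_*$; the whole content is the \emph{existence} of such a $g$ in every neighborhood of $f_*$. The structural input of a non-trivial Fatou critical relation is a $\lambda_0$-fiber $F_0$ of Fatou type --- a nondegenerate continuum properly contained in the filled Julia sets --- whose periodic cycle carries, under the $\lambda_0$-renormalization, the orbits of at least two critical points; equivalently, the reduced mapping schema $T(\lambda_0)$ has a periodic vertex $v_0$ of local degree $\delta\ge 3$, or else a quadratic-like renormalization that captures a second critical point. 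In either case there is a \emph{secondary} critical point whose combinatorial position inside the renormalized filled Julia set of the $v_0$-cycle is a free parameter of the renormalization; the straightening map records this position while rescaling the ambient geometry away, and it is this mechanism that forces discontinuity --- the higher-dimensional form of Douady and Hubbard's cubic-like example.

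First I would set up the local combinatorial picture at $f_*$: by combinatorial rigidity $f_*$ is determined by its rational lamination, its $\lambda_0$-renormalization is post-critically finite, and $\mathbf g_*:=\chi_{\lambda_0}(f_*)$ is a Misiurewicz polynomial tuple. Using the marking and realization apparatus of the preceding paper, together with the fact that $f_*$, being Misiurewicz, lies in the support of the bifurcation measure and is accumulated by parameters of every nearby admissible combinatorial type, one shows that every combinatorial prescription for the renormalization at $v_0$ consistent with $\lambda_0$ is realized by a $\lambda_0$-renormalizable parameter arbitrarily close to $f_*$; this also underlies the bifurcation-measure reformulation in the introduction. Within this picture one chooses a base parameter $g\in N\cap\mathcal R(\lambda_0)$ that agrees with $f_*$ away from the secondary critical point but whose $v_0$-renormalization places the secondary critical value at a combinatorially rich spot of the renormalized filled Julia set --- a position accumulated, from genuinely distinct combinatorial directions, by positions of unboundedly growing combinatorial depth. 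Such $g$ lies in $N$ for $N$ small, again because $f_*$ is accumulated by parameters of every nearby combinatorial type.

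The core is to produce a sequence $g_n\to g$ in $\mathcal R(\lambda_0)$ with $\chi_{\lambda_0}(g_n)=:\mathbf g_n$ not converging to $\mathbf g:=\chi_{\lambda_0}(g)$. Choose $g_n$ so that $\mathbf g_n$ coincides with $\mathbf g$ except at the position $P_n$ of the secondary critical value inside the renormalized filled Julia set, which one drives to a limit position $P_\infty$ --- in the dynamical plane of $g$ --- that differs from the one recorded by $\mathbf g$ (for instance $P_n$ preperiodic of preperiod $n$, or a period-$n$ superattracting position), arranged so that no subsequence of $\mathbf g_n$ converges to $\mathbf g$. Two points require verification. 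First, each $g_n$ is genuinely $\lambda_0$-renormalizable with the prescribed renormalized data: here one uses that the critical relation is \emph{Fatou}, so $F_0$ has interior in which the secondary critical point may be moved without destroying $\lambda_0$-renormalizability, together with the straightening theorem to produce an actual $g_n$ realizing the prescription. Second, $g_n\to g$ in $\poly(d)$: this rests on a tuning/small-copy estimate asserting that the diameter of the parameter slice supporting a given combinatorial type for the secondary critical value shrinks to $0$ as its depth grows, so that a perturbation large at the renormalized scale is arbitrarily small at the scale of $\poly(d)$. Granting both, $g_n\to g$ while $\mathbf g_n\not\to\mathbf g$, so $\chi_{\lambda_0}$ is discontinuous at $g\in N$. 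The Corollary then follows, the disjoint-type direction being the product of the continuous quadratic-like straightening maps of Douady and Hubbard, and the converse being the Main Theorem --- for a post-critically finite lamination with simple critical points, non-disjointness forces a non-trivial Fatou critical relation.

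The step I expect to be the main obstacle is reconciling the parameter-space convergence $g_n\to g$ with the dynamical-space divergence $\mathbf g_n\not\to\mathbf g$ --- that is, the tuning estimate together with the verification that the prescribed renormalized data are attained by parameters that neither leave $\mathcal R(\lambda_0)$ nor escape $N$. This is precisely what the Fatou, as opposed to Julia, nature of the critical relation is designed to supply, and making it quantitative will require the puzzle and rigidity machinery adapted to $\lambda_0$: modulus bounds along the nest of puzzle pieces around the secondary critical point, and a parameter--dynamics correspondence transporting a wandering internal combinatorics in the dynamical plane of $g$ into an honest parameter sequence. By comparison, the reduction to the local statement, the choice of the base parameter $g$, and the bookkeeping of the secondary-critical combinatorial types are routine once the framework of the preceding paper is available.
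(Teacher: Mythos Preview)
Your proposal has a genuine gap at the step you yourself flag as the main obstacle, and it is not merely a matter of missing estimates: the approach as outlined cannot work.

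You want to prescribe straightened data $\mathbf g_n$ that agree with $\mathbf g=\chi_{\lambda_0}(g)$ except in the position of the secondary critical value, arrange $\mathbf g_n\to\mathbf g'\neq\mathbf g$ (or $\mathbf g_n$ non-convergent), and then argue that $g_n=\chi_{\lambda_0}^{-1}(\mathbf g_n)\to g$ via a ``tuning/small-copy estimate''. But the shrinking of parameter slices as combinatorial depth grows tells you that $g_n$ accumulates on the parameter realizing the \emph{limiting} combinatorics, not on $g$. Concretely, whenever the limit $\mathbf g'$ is quasiconformally rigid, the partial continuity of $\chi_{\lambda_0}^{-1}$ (Proposition~\ref{prop-conti-inv}) forces any convergent subsequence of $g_n$ to tend to $\chi_{\lambda_0}^{-1}(\mathbf g')\neq g$. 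So your claimed convergence $g_n\to g$ is false in that case, and in the remaining non-rigid case you would have to exploit the qc-deformation space of $\mathbf g'$ --- but you give no mechanism for doing so. The sentence ``a perturbation large at the renormalized scale is arbitrarily small at the scale of $\poly(d)$'' is precisely the discontinuity you are trying to prove, not an input you may assume.

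The paper's proof is structurally different and does not proceed by a direct small-copy estimate. It first perturbs the Misiurewicz $f_0$ to a nearby \emph{parabolic} $f_1$ satisfying condition \condII\ (Theorem~\ref{thm-mis-perturb}), then uses parabolic implosion (Theorem~\ref{thm-para-purturb}) to build double sequences $f_{n,m}\to f_n\to f_1$ along which continuity of $\chi_{\lambda_0}$ would force $|\mult_{f_1}(\alpha)|=|\mult_{P_1}(\psi(\alpha))|$ for \emph{every} repelling periodic point $\alpha$ in the quadratic-like piece (Theorem~\ref{thm-conti-mult}). The Sullivan--Prado--Przytycki--Urba\'nski theorem then upgrades this to an analytic conjugacy, and Theorem~\ref{thm-semiconj} yields a degree equality that contradicts non-triviality of $\lambda_0$. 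The essential mechanism --- parabolic implosion, Lavaurs maps, H\"older exponents of the limiting qc conjugacy at repelling points --- is entirely absent from your outline.
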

We say a $d$-invariant post-critically finite rational lamination
$\lambda_0$ is of \emph{disjoint type} if it is the rational
lamination of a post-critically finite polynomial such that all Fatou
critical points are simple and periodic, and lie in different orbits
(an equivalent definition without polynomial realization is given
in Section~\ref{sec-comb}).
In this case, renormalizations consist of quadratic-like mappings,
so the corresponding straightening map is continuous.

Also, in the same article \cite{Douady-Hubbard-poly-like}, Douady and
Hubbard have already given an example of
cubic-like family whose straightening map is discontinuous.
Their example strongly suggests that straightening maps
are often discontinuous. However, their example is constructed by
putting some invariant complex dilatation outside filled Julia sets of
polynomials, 
hence their argument does not allow us to know whether a given
straightening map is continuous or not.

Epstein \cite{Epstein-manuscript} have also proved that straightening
maps are discontinuous on the boundary of the \emph{main} hyperbolic
component (i.e., the one containing the map hybrid equivalent to the
power map), and his result can be 
generalized to all hyperbolic components such that an attracting
periodic orbit attracts at least two critical points, by use of the
author's result \cite{Inou-semiconj}.
Epstein's result and our result have many similarities; 
both depend on parabolic implosion, and prove existence of an analytic
conjugacy between renormalization and its straightening assuming
that the straightening map is continuous.
However, the proofs for the existence are completely different.
Epstein's proof depends on analytic dependence of Ecalle-Voronin
invariants and our proof depends on combinatorial constructions with the
help of rational laminations. This difference yields completely different
sequences for which the straightening map is discontinuous; 
Epstein's one is in a hyperbolic component, and ours is in the
bifurcation locus. 

In the case of cubic polynomials,
fully renormalizable polynomials are divided into four types in terms of
mapping schema, according to Milnor \cite{Milnor-cubic}; adjacent,
bitransitive (bicritical), capture and disjoint.
The target space of straightening maps are determined by these types:
It is the cubic connectedness locus $\mathcal{C}(3)$ for adjacent type, 
the connectedness locus of the biquadratic family
$\mathcal{C}(2\times2)=\{(a,b) \in \C^2;~K((z^2+a)^2+b)$ is
connected$\}$ for bitransitive type,
the full family of connected quadratic filled Julia sets
$\mathcal{MK}=\{(c,z);\ c \in \mathcal{M},\ z \in K(z^2+c)\}$ for
capture type, and
the product space of the Mandelbrot set with itself $\mathcal{M} \times
\mathcal{M}$ for disjoint type.
Any disjoint type straightening map is continuous because it consists
of straightening maps of two quadratic-like families.
Straightening maps are not continuous for all the other cases.

On the other hand, for capture renormalizations, 
straightening maps are continuous on each fiber.
Buff and Henriksen \cite{Buff-Henriksen} have proved
there are natural quasiconformal embeddings of the filled Julia set
$K(\lambda z+z^2)$ for $|\lambda| \le 1$ into the connectedness locus of
a cubic one-parameter family of the form $\{\lambda z+az^2+z^3\}_{a \in \C}$, 
and we proved that any connected filled
Julia set can be homeomorphically embedded to the connectedness locus of
any higher degree polynomials \cite{Inou-intertwine}. 
Furthermore, we have proved in the preceding paper
\cite{Inou-Kiwi-straightening} that for a cubic 
rational lamination of primitive capture type, 
the straightening map $\chi$ is surjective onto $\mathcal{MK}$,
and its restriction to $\mathcal{K}_c = \chi^{-1}(\{c\}\times K(z^2+c))$
for each $c \in \mathcal{M}$ can be extended to a quasiconformal
embedding, possibly after desingularizing the one-dimensional analytic
set containing $\mathcal{K}_c$.
Therefore, by Main Theorem, such quasiconformal embeddings of connected
Julia sets does not move continuously on polynomials.

Partially, the proof of Main Theorem can be also applied to
renormalizable rational maps and transcendental entire maps. We discuss
that in the last section (Section~\ref{sec-rat-trans}).

Our argument needs two-dimensional bifurcations to prove discontinuity:
One is bifurcation of two critical orbits in one grand orbit, and the
other is parabolic 
bifurcation. Therefore, we cannot apply our argument to a one-parameter
family of polynomials.
Hence it is natural to ask whether we can get discontinuous
straightening maps for smaller parameter spaces.
And we may also ask whether parabolic bifurcation is the unique
possibility to get discontinuity.
So it is natural to ask the following:
\begin{question}
 Can straightening maps be discontinuous under the following conditions?
 \begin{enumerate}
  \item On real polynomial families.
  \item On dynamically defined complex one-parameter spaces.
  \item On anti-holomorphic one-parameter families.
  \item At non-parabolic parameters (having a Siegel disk, or an
	invariant line field on the Julia set).
 \end{enumerate}
\end{question}
The typical example of the third families is the \emph{unicritical
anti-holomorphic family} of degree $d$, which is of the form $\bar{z}^d+c$.
The connectedness locus of this family is called the \emph{multicorn},
and it is called the \emph{tricorn} when $d=2$.
Milnor also observed a subset which looks similar to the tricorn in the
real cubic family \cite{Milnor-cubic}, which is the simplest family of (i).
By numerical experiment, one can see many ``umbilical cords''
accumulating to hyperbolic components, which do not land at a point.
In fact, Mukherjee and the author gave a complete description for
the landingness of umbilical cords for multicorns, and then 
proved the straightening map for any ``multicorn-like set'' in a
multicorn of even degree or the real cubic family is not continuous
\cite{Inou:2014aa-2} \cite{Inou:2016aa}. 
Multicorn-like sets naturally appear in other families with
anti-holomorphic symmetry (such as real families) and anti-holomorphic
families. So it is natural to expect that straightening maps are also
discontinuous for such families.

The proof of Main Theorem consists of several steps.
The first step is to relate the continuity of a straightening map for
an \emph{analytic family of polynomial-like mappings with two marked points}
(abbr.\ AFPL2MP) to the multipliers of repelling periodic orbits
(Theorem~\ref{thm-conti-mult}).
Here we consider a similar situation as the example of
discontinuous straightening map by Douady and Hubbard.
We start with a polynomial-like map having a parabolic periodic point
whose basin contains both of the marked points. 
If the straightening map is continuous in a neighborhood of this map,
and it has nice perturbations described in terms of a given repelling
periodic point, parabolic implosion and Lavaurs map, then the modulus of
the multiplier of the repelling periodic point is preserved by
straightening.
Two marked points will be post-critical points in the application, so that
the continuity of $\chi_{\lambda_0}$ implies the continuity of the
straightening map of the corresponding AFPL2MP.

Secondly, we study parabolic bifurcations to find nice perturbations 
so that we can apply the first step (Section~\ref{sec-para-bif}).

Thirdly, we find a nice parabolic map $f_1$ arbitrarily close to a given
Misiurewicz polynomial $f_0$ for which we have nice perturbations in the
second step (Section~\ref{thm-mis-perturb}).

By gluing these three steps together, 
if the straightening map $\chi$ is continuous in a neighborhood of
$f_0$, we can get a hybrid conjugacy preserving multipliers
between quadratic-like restrictions of some iterates of a
renormalization of $f_1$ and its straightening $P_1=\chi(f_1)$.
Thus they are analytically conjugate by
Sullivan-Prado-Przytycki-Urbanski theorem (Theorem~\ref{thm-sppu}).

Then, applying the results on analytic conjugate polynomial-like
restrictions of polynomials \cite{Inou-semiconj} to get a
contradiction.

Since this proof is constructive, we can get some information at which
parameter a straightening map is not continuous.
See Remark~\ref{rem-discont-at-f_n} for details.

One of the most difficulties in the proof is that we need to perturb
inside the connectedness locus. Moreover, we need to perturb in the
domain of the straightening map $\mathcal{R}(\lambda_0)$. 
To do this, we construct a sequence of rational laminations or critical
portraits which a desired sequence of maps should have,
then we realize them by polynomials.
However, since those combinatorial objects are not complete invariants,
we cannot apply this construction to parabolic polynomials.
Hence we first construct Misiurewicz polynomials and take a limit to
find such perturbations. 
To show that Misiurewicz polynomials constructed in this way
and their limits are in $\mathcal{R}(\lambda_0)$,
we also need some facts saying that $\mathcal{R}(\lambda_0)$ contains
plenty of dynamics (Theorem~\ref{thm-cpt} and Theorem~\ref{thm-pcf-tuning}),
proved in the preceding paper \cite{Inou-Kiwi-straightening}.

\begin{ackn}
 The author would thank Mitsuhiro Shishikura for helpful comments.
 He would also thank Peter Ha\"issinsky, Tomoki Kawahira and Jan Kiwi
 for valuable discussions.
 He would also like to express his gratitude to Institut de
 Math\'ematiques de Toulouse for its hospitality during his visit during
 2007/2008 when this paper was mostly written.
\end{ackn}


\section{Polynomial-like mappings}

In this section, we recall the notion of polynomial-like mappings.
We also describe Sullivan-Prado-Przytycki-Urbanski theorem, which gives
a sufficient condition for given two polynomial-like mappings to be
analytically equivalent (Theorem~\ref{thm-sppu}).
We apply this theorem to polynomial-like restrictions of rational maps,
and prove existence of a global conjugacy in a weak sense
(Theorem~\ref{thm-mult-semiconj}).

\begin{defn}[Polynomial-like mapping]
 A \emph{polynomial-like mapping} is a proper holomorphic map $f:U'
 \to U$ with $U' \Subset U \subset \C$.
 We always assume the degree of $f$ is at least two.
 The \emph{filled Julia set} $K(f)=K(f;U',U)$ is defined by
 \[
  K(f;U',U)=\bigcap_{n \ge 0} f^{-n}(U')
 \]
 and we call $J(f) = J(f;U',U)=\partial K(f;U',U)$ the \emph{Julia set}.
\end{defn}

We introduce the notion of \emph{external markings},
which is necessary to distinguish polynomials whose renormalizations are
hybrid equivalent but combinatorially different.
\begin{defn}[Access and external marking]
 Let $f:U' \to U$ be a polynomial-like mapping.
 A \emph{path to $K(f)$} is a path $\gamma:[0,1] \to U'$ such that
 $\gamma(0) \in J(f)$ and $\gamma((0,1]) \subset U' \setminus K(f)$.
 For a path $\gamma$ to $K(f)$, there exists a unique component of
 $f(\gamma) \cap U'$ which is also a path to $K(f)$ (after a suitable
 reparametrization). We denote it by $f_*\gamma$.

 We say two paths $\gamma_0$, $\gamma_1$ to $K(f)$ are 
\emph{homotopic} if they are homotopic rel $K(f)$, i.e., if there exists a
 homotopy $\gamma:[0,1] \times [0,1] \to U'$ such that
 $\gamma(0,t)=\gamma_0(t)$, $\gamma(1,t)=\gamma_1(t)$ and
 $\gamma(s,0)=\gamma_0(0)$. 
 An \emph{access} for $f:U' \to U$ is a homotopy class of paths to
 $K(f)$.
 We say an access $[\gamma]$ is \emph{invariant} 
 if $f_*\gamma$ is homotopic to $\gamma$.
 It is easy to see that this definition does not
 depend on the choice of representatives.

 An \emph{external marking} of a polynomial-like mapping is an
 invariant access.
 An \emph{externally marked polynomial-like mapping} is 
 a pair $(f:U' \to U, [\gamma])$ of a polynomial-like mapping and an
 external marking of it.
\end{defn}

\begin{exam}
 Let $f$ be a monic centered polynomial of degree $d \ge 2$.
 For sufficiently large $R>0$,
 let $U=\Delta(R)=\{|z|<R\}$ and $U'=f^{-1}(U)$.
 Then $f:U' \to U$ is a polynomial-like mapping of degree $d$.
 If the external ray $R_f(0)$ of angle $0$ does not bifurcate
 (e.g., when $K(f)$ is connected),
 it lands at a fixed point in $J(f)$ and defines an external marking for
 it. 
 We call it the \emph{standard external marking for $f$}.
\end{exam}

Let $\poly(d)$ be the family of monic centered polynomials of degree
$d$ and let $\mathcal{C}(d)$ be its connectedness locus, i.e., 
the set of all $f \in \poly(d)$ such that the filled Julia set $K(f)$ is
connected.

Equipped with the standard external markings,
$\mathcal{C}(d)$ can be considered as the set of affine
conjugacy classes of externally marked polynomials of degree $d$ with
connected Julia sets.

\begin{defn}[Hybrid equivalence]
 We say two polynomial-like mappings $f:U' \to U$ and $g:V' \to V$ are
 \emph{hybrid equivalent} if there exists a quasiconformal
 homeomorphism $\psi:U'' \to V''$ between neighborhoods of the filled
 Julia sets of $f$ and $g$ such that $\psi \circ f = g \circ \psi$ and
 $\bar{\partial}\psi \equiv 0$ a.e.\ on $K(f;U',U)$.

 For externally marked polynomial-like mappings $(f,[\gamma_f])$ and
 $(g,[\gamma_g])$, 
 we say a hybrid conjugacy $\psi$ between $f$ and $g$ 
 \emph{respects external markings}
 if $\psi(\gamma_f)$ is homotopic to $\gamma_g$.
\end{defn}

The following theorem by Douady and Hubbard
\cite{Douady-Hubbard-poly-like} classifies polynomial-like mappings in
the sense of hybrid conjugacy. It also asserts that most dynamical
properties for polynomials also holds for polynomial-like mappings.
We can further add some information on external markings (see
\cite{Inou-Kiwi-straightening}).
\begin{thm}[Straightening theorem]
 \label{thm-DH-straightening}
 Any polynomial-like mapping $f:U' \to U$
 of degree $d$ is hybrid equivalent to some polynomial $g \in \poly(d)$.

 Moreover, if $K(f;U',U)$ is connected and $f:U' \to U$ is externally marked,
 then such a polynomial $g \in \mathcal{C}(d)$ is unique assuming that a
 hybrid conjugacy respects the external markings, where the external
 marking of $g$ is the standard external marking.
\end{thm}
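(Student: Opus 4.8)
The plan is the classical construction of Douady and Hubbard, with an extra bookkeeping of accesses for the uniqueness half. For existence I would first reduce, as usual, to a restriction of $f$ in which $\partial U$ is a smooth Jordan curve and $U'$ is a smoothly bounded topological disk, so that $A:=\overline U\setminus U'$ is a smooth closed annulus on whose inner boundary $f$ is a degree-$d$ covering onto $\partial U$. Then glue to $\overline U$ along $\partial U$ a copy of $\overline{\C}\setminus\D$ carrying the model $w\mapsto w^{d}$, and extend $f$ across $A$ to a quasiregular degree-$d$ map matching $w\mapsto w^{d}$ on $\partial U$; such an extension exists because any two smooth degree-$d$ self-coverings of a circle are smoothly conjugate and a conjugacy of circle coverings interpolates over an annulus. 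This yields a quasiregular degree-$d$ branched self-cover $F$ of $\overline{\C}$, holomorphic off $A$, fixing $\infty$ with local degree $d$, conformally $w\mapsto w^{d}$ near $\infty$, with $\bigcap_{n\ge0}F^{-n}(\overline{U'})=K(f)$. Next I build an $F$-invariant Beltrami coefficient $\mu$ with $\|\mu\|_{\infty}<1$: set $\mu=0$ on $K(f)$ and wherever every iterate of $F$ is holomorphic, put the standard structure on $A$, and transport it along the holomorphic iterates of $F$ to the whole grand orbit $\bigcup_{n}F^{-n}(A)$; since a grand orbit meets $A$ at most once per level and $F$ is holomorphic off $A$, $\mu$ is $F$-invariant with dilatation bounded by that of $F|_{A}$. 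By the measurable Riemann mapping theorem there is a quasiconformal $\varphi\colon\overline{\C}\to\overline{\C}$ with $\overline{\partial}\varphi/\partial\varphi=\mu$; then $\varphi\circ F\circ\varphi^{-1}$ is holomorphic of degree $d$ and fixes $\varphi(\infty)$ with local degree $d$, hence after an affine normalization it is a monic centered polynomial $g\in\poly(d)$, and since $\mu\equiv0$ on $K(f)$ the (normalized) $\varphi$ is a hybrid conjugacy of $f$ to $g$ near $K(f)$; $K(g)$ is connected whenever $K(f)$ is.

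For uniqueness, assume $K(f)$ connected, fix an external marking $[\gamma]$, and let $g_{1},g_{2}\in\mathcal C(d)$ be monic centered polynomials hybrid equivalent to $(f,[\gamma])$ by conjugacies $\psi_{1},\psi_{2}$ respecting external markings, the $g_{i}$ carrying the standard marking. Then $h:=\psi_{2}\circ\psi_{1}^{-1}$ is a hybrid conjugacy of $g_{1}$ to $g_{2}$ on a neighbourhood of $K(g_{1})$, with $\overline{\partial}h\equiv0$ a.e.\ on $K(g_{1})$, carrying $R_{g_{1}}(0)$ to a path homotopic to $R_{g_{2}}(0)$. Using connectedness of $K(g_{i})$, I would extend $h$ across the basin of $\infty$ of $g_{1}$ by the dynamics — lifting $h$ through $g_{1}$ and $g_{2}$ on successively larger collars of $J(g_{1})$, the choice of lift being pinned down by the requirement that the $0$-rays correspond — to a global quasiconformal conjugacy $H\colon\overline{\C}\to\overline{\C}$ of $g_{1}$ to $g_{2}$ agreeing with $h$ near $K(g_{1})$. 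The coefficient $\nu:=\overline{\partial}H/\partial H$ is $g_{1}$-invariant and vanishes a.e.\ on $K(g_{1})$. Then $t\mapsto t\nu$ (with $1$ in range since $\|\nu\|_{\infty}<1$) yields, after straightening and normalizing, a holomorphic family $t\mapsto g_{t}$ of monic centered polynomials with $g_{0}=g_{1}$, $g_{2}$ occurring at $t=1$, all hybrid equivalent; since hybrid equivalence preserves the multipliers of repelling periodic orbits and these depend holomorphically on $t$, they are constant in $t$, so $t\mapsto g_{t}$ is constant and $g_{1}=g_{2}$, the residual rotation ambiguity among monic centered polynomials in one affine class being eliminated by the compatibility of the $0$-rays.

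The existence half is routine once the quasiregular extension is set up; the only care is the reduction to $U'$ a topological disk when $K(f)$ is disconnected. The substance is the uniqueness half, where the two delicate points are: (i) promoting the hybrid conjugacy $h$, defined only near $K$, to a genuine \emph{global} quasiconformal conjugacy $H$ — this is exactly where the extension over the basin by the dynamics, and the bookkeeping of external markings that makes the successive lifts unambiguous, are needed — and (ii) the rigidity step that a holomorphic family of hybrid-equivalent monic centered polynomials is constant, which rests on the invariance of repelling multipliers under hybrid conjugacy. I expect (ii), together with making the marking-compatibility argument watertight so that the straightened polynomial is $g_{2}$ on the nose rather than merely an affine image of it, to be the hard parts.
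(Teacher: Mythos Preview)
The paper does not prove this theorem; it is quoted from Douady--Hubbard (with the external-marking refinement cited to \cite{Inou-Kiwi-straightening}). Your existence argument is the standard Douady--Hubbard construction and is fine.

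Your uniqueness argument, however, has a genuine gap. The claim that ``hybrid equivalence preserves the multipliers of repelling periodic orbits'' is \emph{false}: a hybrid conjugacy is required to be conformal only almost everywhere on $K$, and repelling periodic points sit on $J$, where nothing forces differentiability, let alone conformality. Indeed the entire paper you are reading is built on the failure of this claim --- Theorem~\ref{thm-conti-mult} and Theorem~\ref{thm-mult-semiconj} show that \emph{if} a hybrid conjugacy happened to preserve repelling multipliers then the maps would be analytically conjugate, and this is exploited to prove discontinuity of straightening. So your holomorphic-family-in-$t$ argument collapses: the multipliers of repelling cycles of $g_t$ genuinely vary with $t$, and you have no invariant to freeze the family.

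The standard fix is to choose a better extension across the basin of infinity. Rather than lifting $h$ dynamically (which only gives a quasiconformal extension with nontrivial dilatation outside $K$), use the B\"ottcher coordinates: since $K(g_1)$ and $K(g_2)$ are connected, $\phi_{g_i}:\C\setminus K(g_i)\to \C\setminus\overline\Delta$ are conformal isomorphisms, and one sets $H=\phi_{g_2}^{-1}\circ\phi_{g_1}$ on $\C\setminus K(g_1)$ and $H=h$ on $K(g_1)$. The external-marking hypothesis is exactly what makes these two pieces match continuously across $J(g_1)$ (it pins down the rotational ambiguity in $\phi_{g_2}^{-1}\circ\phi_{g_1}$). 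By Rickman's lemma the glued map $H$ is globally quasiconformal; but now $\bar\partial H=0$ a.e.\ both on $K(g_1)$ (hybrid condition) and on its complement (B\"ottcher maps are holomorphic), so $H$ is conformal, hence affine, hence the identity since both $g_i$ are monic centered and the $0$-rays correspond. No deformation argument is needed.
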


For a periodic point $x \in \C$ of period $n$ for a holomorphic map $f$,
let us denote its multiplier by $\mult_f(x)$, i.e.,
\[
 \mult_f(x) = (f^n)'(x).
\]

\begin{defn}[Hybrid conjugacy preserving multipliers]
 Let $f:U' \to U$ and $g:V' \to V$ be polynomial-like mappings 
 and $\psi:U \to V$ be a hybrid conjugacy.
 We say that \emph{$\psi$ preserves multipliers}
 if for any periodic point $x$ for $f$, we have
 \[
  |\mult_f (x)| = |\mult_g(\psi(x))|.
 \]
\end{defn}
Note that we only assume the conjugacy preserves the moduli of
multipliers by definition, so it might not preserve the arguments.

\begin{defn}[\Semiconj]
 We say two rational maps $f_1$ and $f_2$ are \emph{\semiconj}
 if there exist rational maps $g$, $\psi_1$ and $\psi_2$ such that
 $\psi_i \circ g=f_i \circ \psi_i$.
\end{defn}
In particular, when $f_1$ and $f_2$ are \semiconj\ , they have the
same degree.

The aim of this section is to prove the following:
\begin{thm}
 \label{thm-mult-semiconj}
 Let $f_1$ and $f_2$ be tame rational maps.
 Assume they have polynomial-like restrictions
 $f_i:U_i' \to U_i$, $i=1,2$ which are hybrid conjugate by a conjugacy
 preserving multipliers.
 Then $f_1$ and $f_2$ are \semiconj.
\end{thm}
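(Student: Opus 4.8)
The plan is to push the standard complex structure through the hybrid conjugacy, build by the measurable Riemann mapping theorem a new rational map $g$ which is conformally conjugate to $f_2$ on polynomial-like pieces, and then use the multiplier hypothesis to recognise that $g$ is related to $f_1$ by a genuine holomorphic correspondence; composing the two correspondences then finishes the proof.

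Concretely, let $\psi$ be a hybrid conjugacy preserving multipliers, defined on a neighbourhood $W \Subset U_1'$ of $K(f_1) = K(f_1;U_1',U_1)$, and set $\mu = \mu_\psi$. Since $\psi \circ f_1 = f_2 \circ \psi$ and $f_2$ is holomorphic, $\mu$ is $f_1$-invariant on $W$, and it vanishes a.e.\ on $K(f_1)$ because $\psi$ is hybrid. Using that $f_1$ is a rational map, I would spread $\mu$ over $\bigcup_{n \ge 0} f_1^{-n}(W)$ by pulling back along the iterates of $f_1$ --- this is consistent precisely because $\mu$ is $f_1$-invariant on $W$ --- and set it to $0$ on the complement, obtaining a globally $f_1$-invariant Beltrami coefficient $\hat\mu$ on $\widehat{\C}$ with $\|\hat\mu\|_\infty \le \|\mu_\psi\|_\infty < 1$. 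Integrating by the measurable Riemann mapping theorem gives a quasiconformal $\phi : \widehat{\C} \to \widehat{\C}$ with complex dilatation $\hat\mu$, and then $g := \phi \circ f_1 \circ \phi^{-1}$ is a rational map of the same degree as $f_1$. Because $\hat\mu = \mu_\psi$ on $W$, the composition $\beta := \psi \circ \phi^{-1}$ has vanishing dilatation on $\phi(W)$, hence is conformal there, and one checks $\beta \circ g = f_2 \circ \beta$, so $\beta$ is a conformal conjugacy between a polynomial-like restriction of $g$ and one of $f_2$. By the straightening theorem (Theorem~\ref{thm-DH-straightening}) and the author's results on analytically conjugate polynomial-like restrictions \cite{Inou-semiconj}, this already yields that $g$ and $f_2$ are \semiconj.

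It remains to relate $f_1$ and $g$, and this is where the hypothesis enters. The maps $f_1$ and $g$ are globally quasiconformally conjugate via $\phi$, and the multiplier data transfers: for every periodic point $x$ of the polynomial-like map $f_1$ --- necessarily lying in $K(f_1)$ --- conformality of $\beta$ gives $\mult_g(\phi(x)) = \mult_{f_2}(\beta(\phi(x))) = \mult_{f_2}(\psi(x))$, whence $|\mult_{f_1}(x)| = |\mult_g(\phi(x))|$ by hypothesis. Thus $\phi$ is a quasiconformal conjugacy between $f_1$ and $g$ which is conformal on $\Int K(f_1)$ and preserves the moduli of all multipliers of the polynomial-like restriction. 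Together with a deformation argument --- scaling $\hat\mu$ by $t \in \D$ gives a holomorphic family $g_t = \phi_t \circ f_1 \circ \phi_t^{-1}$ with $g_0 = f_1$, $g_1 = g$, along which the cycle multipliers vary holomorphically --- and the rigidity of multiplier-preserving conjugacies (Sullivan--Prado--Przytycki--Urbanski), one argues that $\hat\mu$ is rigid enough that $\phi$ descends to a holomorphic correspondence: in the extreme case $\hat\mu \equiv 0$ the map $g$ is M\"obius-conjugate to $f_1$ and there is nothing further to prove, and in general $f_1$ and $g$ are \semiconj. Finally, composing an irreducible holomorphic correspondence between $f_1$ and $g$ with one between $g$ and $f_2$, and passing to an irreducible component, yields the desired correspondence between $f_1$ and $f_2$.

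I expect the passage from the merely quasiconformal relation between $f_1$ and $g$ to a holomorphic correspondence to be the main obstacle. The difficulty is twofold: only the \emph{moduli} of the multipliers are controlled, not the multipliers themselves, so conformal conjugacy cannot be expected in general (which is precisely why the statement is phrased via correspondences); and the invariant Beltrami coefficient $\hat\mu$ need not be supported on the Julia set of the rational map $f_1$, so global no-invariant-line-field statements are unavailable and one must localise the argument to the escaping dynamics of the polynomial-like restriction. A further, more routine, complication is the bookkeeping when $K(f_1)$ is disconnected, where several polynomial-like pieces and the associated mapping schema must be carried along.
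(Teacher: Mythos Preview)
Your detour through the auxiliary map $g$ is unnecessary, and the step you yourself flag as the main obstacle --- passing from the quasiconformal relation between $f_1$ and $g$ to a holomorphic correspondence --- is precisely where the paper's argument is simpler and more direct.

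The paper's proof is two lines. The Sullivan--Prado--Przytycki--Urbanski theorem (Theorem~\ref{thm-sppu}) applies \emph{directly} to the given hybrid conjugacy $\psi$ between the polynomial-like restrictions $f_1:U_1'\to U_1$ and $f_2:U_2'\to U_2$: since $\psi$ preserves moduli of multipliers, these restrictions are analytically conjugate (tameness holds in the paper's applications because the maps are NCP). Then Theorem~\ref{thm-semiconj} immediately yields that $f_1$ and $f_2$ are \semiconj. No intermediate map $g$, no spreading of Beltrami coefficients, no composition of correspondences.

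Your argument does eventually invoke SPPU, but only vaguely, and only to relate $f_1$ and $g$. If you try to make that step precise, you will find yourself applying SPPU to the hybrid conjugacy $\phi|_W$ between polynomial-like restrictions of $f_1$ and $g$ (it is hybrid since $\hat\mu$ vanishes on $K(f_1)$, and it preserves multipliers by your own computation), followed by Theorem~\ref{thm-semiconj}. But that is exactly the paper's two-line argument with $g$ in place of $f_2$; once you see this, you could have applied it to $\psi$ from the start and dispensed with $g$ altogether. As written, your step~5 is not a proof: the ``deformation argument'' does not by itself produce a correspondence, and the phrase ``$\hat\mu$ is rigid enough that $\phi$ descends to a holomorphic correspondence'' has no content without SPPU spelled out. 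Your final step --- composing two irreducible holomorphic correspondences and extracting an irreducible component --- would also require justification, since transitivity of the relation in the definition is not automatic.
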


The following theorem is essentially proved by Prado \cite{Prado} 
based on the idea given by Sullivan \cite{Sullivan-icm},
and its complete proof was given by Przytycki and Urbanski
\cite{Przytycki-Urbanski-tame}. 
\begin{thm}[Sullivan-Prado-Przytycki-Urbanski]
 \label{thm-sppu}
 Suppose that $f:U' \to U$ and $g:V \to V$ are two tame polynomial-like maps. 
 Then the following are equivalent:
 \begin{enumerate}
  \item \label{ppu-item-mult}
	there exists a hybrid conjugacy between $f$ and $g$ preserving
	multipliers.
  \item \label{ppu-item-anal}
	$f$ and $g$ are analytically conjugate, i.e., there exists a
	conformal isomorphism $\varphi:U'' \to V''$ conjugating $f$ and
	$g$, where $U''$ and $V''$ are neighborhoods of $K(f)$ and
	$K(g)$ respectively.
 \end{enumerate}
\end{thm}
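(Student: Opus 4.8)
The plan is to prove the nontrivial implication \ref{ppu-item-mult}$\Rightarrow$\ref{ppu-item-anal} by recognizing the hybrid conjugacy as a deformation of the \emph{external} structure of $f$ and showing that preservation of the moduli of multipliers forces this deformation to be trivial; the reverse implication \ref{ppu-item-anal}$\Rightarrow$\ref{ppu-item-mult} is immediate, since a conformal conjugacy has $\bar\partial\equiv 0$ and is therefore a hybrid conjugacy, and conjugating a holomorphic germ by a holomorphic map preserves the multiplier of every periodic point exactly (not merely its modulus).

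So assume $\psi$ is a hybrid conjugacy between $f$ and $g$ preserving multipliers, and set $\mu=\bar\partial\psi/\partial\psi$. Then $\|\mu\|_\infty<1$, $\mu$ is $f$-invariant, and $\mu=0$ a.e.\ on $K(f)$, so $\mu$ is supported on the escaping set $\Omega=U''\setminus K(f)$. On $\mathrm{int}\,K(f)$ the map $\psi$ is already conformal by Weyl's lemma, so everything reduces to showing that $\mu\equiv 0$ a.e.\ on the escaping annulus surrounding $K(f)$ as well; granting that, $\varphi=\psi$ is the desired analytic conjugacy. To this end I would solve the measurable Riemann mapping problem for $t\mu$ for $t$ in a disk of radius $1/\|\mu\|_\infty>1$, obtaining quasiconformal maps $h_t$ (normalized via the standard external structure of the straightening, Theorem~\ref{thm-DH-straightening}) and a holomorphic family of polynomial-like maps $f_t=h_t\circ f\circ h_t^{-1}$ with $f_0=f$. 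At $t=1$ the map $h_1$ straightens the same Beltrami coefficient as $\psi$, hence differs from it by a conformal map, so $f_1$ is conformally conjugate to $g$. The family $\{f_t\}$ stays in a compact family of polynomial-like maps of bounded geometry, so each repelling cycle of $f$ continues along a holomorphic motion $t\mapsto P(t)$ whose $f_t$-multiplier $\rho_P(t)$ is a nowhere-vanishing holomorphic function; thus $\log|\rho_P(t)|$ is a bounded harmonic function on that disk, and the hypothesis on $\psi$ says precisely that $|\rho_P(0)|=|\rho_P(1)|$ for every repelling cycle $P$.

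The heart of the matter — and the step I expect to be the main obstacle — is the rigidity assertion that these endpoint equalities already force $\rho_P$ to be constant for every $P$, and hence that the only Beltrami coefficient compatible with the hypothesis is $\mu\equiv 0$. The point is that for a \emph{tame} polynomial-like map the external conformal structure feeds into the linearizing (Koenigs) coordinates at the repelling cycles in a way that the spectrum $\{|\rho_P|\}_P$ can detect: exploiting that $f$ is uniformly expanding on $J(f)$ away from the pairwise-disjoint, non-recurrent critical orbits, together with a bounded-distortion (thermodynamic-formalism) estimate, one shows that the map from the Teichm\"uller disk of external deformations to the multiplier-modulus spectrum is injective. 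Since $f_0=f$ and $f_1$ (conformally conjugate to $g$) carry the same such spectrum by the two endpoint equalities, they carry the same external structure, i.e.\ $t\mapsto f_t$ is constant in Teichm\"uller parameter and $\mu\equiv 0$. Tameness enters exactly here — for maps with recurrent critical behaviour on the Julia set the multiplier moduli need not be a complete conformal invariant, which is why the statement can fail outside the tame class — and making this injectivity rigorous is the substance of the Prado and Przytycki--Urbanski arguments. With it in hand, $\psi$ is conformal on a neighbourhood of $K(f)$, which completes the proof.
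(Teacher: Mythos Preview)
The paper does not prove this theorem at all: it is quoted from the literature (Prado, Przytycki--Urba\'nski), with only a short remark explaining how to pass from their conclusion---a conformal conjugacy defined on neighborhoods of the \emph{Julia} sets---to one defined on neighborhoods of the \emph{filled} Julia sets, via external equivalence and Douady--Hubbard. So there is no in-house proof to compare your proposal against; your sketch is essentially an outline of how one might organize the cited argument, with the hard rigidity step explicitly deferred to those references.

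That said, there is a genuine slip in your reduction. From the rigidity statement you invoke (injectivity of the multiplier-modulus spectrum on the deformation space) the correct conclusion is that $f_0=f$ and $f_1$ lie in the same conformal class, hence $f$ and $g$ are analytically conjugate \emph{by some map}. It does \emph{not} follow that $\mu\equiv 0$, nor that $\psi$ itself is conformal. A clean counterexample: take $g=f$ and let $\psi$ be any quasiconformal self-conjugacy of $f$ that is conformal on $K(f)$ but has a nontrivial $f$-invariant Beltrami coefficient supported on $U'\setminus K(f)$ (such automorphisms of the external class exist as soon as the fundamental annulus has positive modulus). Then $\psi$ is a hybrid conjugacy trivially preserving all multipliers, yet $\mu\not\equiv 0$. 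So the sentence ``granting that, $\varphi=\psi$ is the desired analytic conjugacy'' and the conclusion ``$\mu\equiv 0$'' are too strong; what survives is exactly the statement of the theorem, namely the \emph{existence} of a conformal conjugacy, generally different from $\psi$. Relatedly, your claim that $t\mapsto [f_t]$ is injective on the disk is not automatic for an arbitrary invariant $\mu$; you only need that equal spectra imply equal conformal class, not that the disk embeds.
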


\begin{rem}
 Although Przytycki and Urbanski proved the theorem in the case of
 rational maps, the same proof can be applied to polynomial-like mappings.
 In addition, they only proved the existence of a conformal conjugacy 
 defined between some neighborhoods of their Julia sets in
 proving \ref{ppu-item-mult} $\Rightarrow$ \ref{ppu-item-anal}.
 However, since the existence of such a conjugacy implies that they 
 are externally equivalent, they are analytically conjugate near the
 \emph{filled} Julia sets \cite{Douady-Hubbard-poly-like}.
\end{rem}

In this paper, we do not treat with the precise definition of tameness,
which is done in terms of conformal measures, so we do not give it here.
See \cite{Urbanski-NCP} for details (including the next theorem). 
The important fact is the following.

\begin{thm}
 Every polynomial-like mapping with no recurrent critical points in its
 Julia set
 is tame.
\end{thm}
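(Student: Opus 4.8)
The plan is to take, as the definition of tameness (following \cite{Urbanski-NCP,Przytycki-Urbanski-tame}), the existence of a \emph{geometric conformal measure}: a Borel probability measure $m$ on $J(f)$ that is $\delta$-conformal (so $m(f(A))=\int_A|f'|^\delta\,dm$ whenever $f|_A$ is injective) with exponent $\delta=\operatorname{HypDim}(J(f))$ equal to the hyperbolic dimension, and with $m$ carried by the \emph{radial} (conical) Julia set $J_r(f)$, i.e.\ the set of $z\in J(f)$ for which there are $r>0$ and infinitely many $n$ such that $f^n$ has a univalent inverse branch on $B(f^n(z),r)$ sending $f^n(z)$ to $z$. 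The proof then splits into (a) producing such an $m$, and (b) proving $m(J(f)\setminus J_r(f))=0$, and the non-recurrence hypothesis is used only in (b).

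For (a), since $f$ restricted to a neighbourhood of the compact set $K(f)$ is proper holomorphic with finitely many critical points, one runs the Patterson--Sullivan construction exactly as for rational maps: fix $z_0\in J(f)$ off the post-critical set, form for $s$ above the critical exponent of $\sum_n\sum_{f^n(w)=z_0}|(f^n)'(w)|^{-s}$ the normalized atomic measures supported on $\bigcup_n f^{-n}(z_0)$, and pass to a weak-$*$ limit as $s$ decreases to the critical exponent. The limit $m$ is $\delta$-conformal with $\delta$ the critical exponent, and the usual comparison with hyperbolic subsets of $J(f)$ (whose conformal measures have exponent their Hausdorff dimension) identifies $\delta$ with $\operatorname{HypDim}(J(f))$.

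For (b), write $J(f)\setminus J_r(f)$ as the union of (i) the grand orbit of $\crit(f)\cap J(f)$ together with the parabolic cycles and their grand orbits, and (ii) the set $B$ of $z\in J(f)$ whose forward orbit returns infinitely often, with univalent pullbacks of a fixed size shrinking to a point, to a fixed neighbourhood of $\crit(f)\cap J(f)$ or of a parabolic cycle. Set (i) is countable, so it is enough to rule out atoms of $m$; since $\delta>0$, conformality gives $m(\{z\})\le|(f^n)'(z)|^{-\delta}\,m(\{f^n(z)\})$, and the non-recurrence of the critical points in $J(f)$, together with the Leau--Fatou flower picture at parabolic points, yields a subsequence along which $|(f^{n_k})'(z)|\to\infty$, so $m$ is atomless. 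For set $B$: by non-recurrence, each $c\in\crit(f)\cap J(f)$ has a neighbourhood $W_c$ with $f^n(c)\notin W_c$ for all $n\ge 1$, which produces a definite Koebe space around every univalent pullback that follows a return to $W_c$; telescoping the Koebe distortion along successive returns shows that the part of $B$ trapped near $\crit(f)$ can be covered efficiently enough that its $m$-measure is at most a fixed $\theta<1$ times the measure of the trap, and iterating this estimate forces that part to have $m$-measure zero. The part of $B$ attracted along $J(f)$ to a parabolic cycle is handled by the classical quadratic estimates for the first-return map to a petal. Combining, $m(J(f)\setminus J_r(f))=0$, so $m$ is geometric and $f$ is tame.

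The main obstacle is step (b), specifically the simultaneous handling of critical and parabolic returns: one must organize the pullback bookkeeping via a first-return (or ``landing'') map to a suitably small neighbourhood of $\crit(f)\cap J(f)$ together with the parabolic cycles so that the Koebe space survives \emph{every} return --- this is precisely what non-recurrence buys --- and then extract the contraction factor $\theta<1$ uniformly, meshing the parabolic petal estimates with the critical ones. The remaining points are soft: properness of $f$ and compactness of $K(f)$ make the polynomial-like case identical to the rational one, so Urbański's argument \cite{Urbanski-NCP} applies with only cosmetic changes.
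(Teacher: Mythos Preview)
The paper does not prove this theorem. Immediately before stating it, the author writes that he does \emph{not} give the precise definition of tameness and refers to \cite{Urbanski-NCP} ``for details (including the next theorem)''; the statement is quoted as a black box from Urba\'nski's work and used only to feed into Theorem~\ref{thm-sppu}. So there is no ``paper's own proof'' to compare against.

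That said, your outline is a faithful sketch of the argument in \cite{Urbanski-NCP}: build a conformal measure by Patterson--Sullivan at the critical exponent, identify that exponent with the hyperbolic dimension, and then use non-recurrence of the Julia critical points (together with the petal estimates at parabolics) to show the measure gives zero mass to the non-radial set. Your closing sentence already concedes that the substance is Urba\'nski's argument carried over to the polynomial-like setting; the paper makes exactly the same concession by citation rather than proof. If you intend this as a self-contained proof rather than a pointer to \cite{Urbanski-NCP}, the passage that would need to be written out carefully is your step~(b) for the set $B$: the contraction factor $\theta<1$ and the meshing of critical and parabolic returns are asserted but not derived, and that is where all the work lies.
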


In this paper, we mainly concern with polynomial-like mappings which are
restrictions of (some iterate of) global dynamics.
In this case, we can prove much stronger conclusion by  
the following theorem \cite{Inou-semiconj}.
\begin{thm}
 \label{thm-semiconj}
 Let $f_1$ and $f_2$ be two rational or entire maps.
 Assume they have polynomial-like restrictions
 $f_i:U_i' \to U_i$, $i=1,2$ which are analytically conjugate.
 Then there exist rational or entire maps $g$, $\varphi_1$ and
 $\varphi_2$ such that $\varphi_i \circ g=f_i \circ \varphi_i$ and $g$
 has a polynomial-like restriction $g:V' \to V$ analytically conjugate
 to $f_i:U_i' \to U_i$. 

 Furthermore, if both $f_1$ and $f_2$ are rational (resp.\ polynomials),
 then $g$, $\varphi_1$ and $\varphi_2$ are also
 rational (resp.\ polynomials), i.e., $f_1$ and $f_2$ are \semiconj.
 In particular, $f_1$ and $f_2$ have the same degree.
\end{thm}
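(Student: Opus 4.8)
The plan is to reduce everything to Theorem~\ref{thm-semiconj}. That theorem takes as input an \emph{analytic} conjugacy between polynomial-like restrictions $f_i:U_i'\to U_i$ and outputs precisely what we want: rational (indeed correspondence) data $g,\varphi_1,\varphi_2$ realizing $f_1$ and $f_2$ as \semiconj. So the real content is to promote the given multiplier-preserving hybrid conjugacy $\psi$ between $f_1:U_1'\to U_1$ and $f_2:U_2'\to U_2$ to a conformal conjugacy on neighborhoods of the two filled Julia sets; once that is in hand, Theorem~\ref{thm-semiconj} closes the argument immediately.

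The machine for that promotion is Theorem~\ref{thm-sppu}: for \emph{tame} polynomial-like maps, a multiplier-preserving hybrid conjugacy is automatically analytic. Thus it suffices to verify tameness of $f_1:U_1'\to U_1$ (equivalently of $f_2:U_2'\to U_2$). Here I would invoke the cited sufficient condition that a polynomial-like map with no recurrent critical point in its Julia set (NCP) is tame, and observe that the NCP property passes through $\psi$: being a topological conjugacy on a neighborhood of the filled Julia sets, $\psi$ carries $J(f_1;U_1',U_1)$ onto $J(f_2;U_2',U_2)$, matches critical points with their images (local degrees are preserved by any conjugacy), and preserves $\omega$-limit sets; hence $f_1:U_1'\to U_1$ has a recurrent critical point in its Julia set precisely when $f_2:U_2'\to U_2$ does. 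When neither does, both restrictions are tame, Theorem~\ref{thm-sppu} supplies the analytic conjugacy, and Theorem~\ref{thm-semiconj} finishes.

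The hard part is the complementary case, where a critical point of $f_i:U_i'\to U_i$ recurs in its Julia set and tameness is no longer handed to us by that criterion. The tempting move is to straighten both restrictions by Theorem~\ref{thm-DH-straightening} to a common polynomial $g\in\poly(d)$, where $d$ is the common degree of the polynomial-like restrictions (the polynomial is common up to affine conjugacy, since $\psi$ makes the two restrictions hybrid equivalent and, with the standard external markings recorded, Theorem~\ref{thm-DH-straightening} gives uniqueness when the filled Julia set is connected), and then to analyze the hybrid self-conjugacy $\phi_2\circ\psi\circ\phi_1^{-1}$ of $g$. The difficulty I expect to be the main obstacle is that hybrid conjugacies need not preserve multipliers of cycles lying in the Julia set — this is exactly the gap that the multiplier hypothesis on $\psi$ is designed to close — so one cannot simply transfer multiplier information through the straightenings $\phi_i$. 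Consequently the honest resolution of this case should be either an appeal to the fact that the class of tame maps coming from the conformal-measure theory underlying Theorem~\ref{thm-sppu} is strictly larger than the NCP class and already contains the relevant maps, or a combinatorial-rigidity argument forcing each $\phi_i$ to be conformal near $K(f_i)$. Finally I would remark that for the use made of this theorem in the present paper the issue is moot: there the polynomial-like maps in question are quadratic-like pieces attached to a parabolic cycle, whose critical orbits converge to that cycle and are in particular non-recurrent, so the NCP argument above already applies.
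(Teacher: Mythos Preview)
You have addressed the wrong statement. The theorem labeled \texttt{thm-semiconj} assumes that the polynomial-like restrictions are already \emph{analytically} conjugate; there is no hybrid conjugacy or multiplier hypothesis to ``promote.'' Your very first sentence (``reduce everything to Theorem~\ref{thm-semiconj}'') would be circular if you were really proving Theorem~\ref{thm-semiconj}. What you have written is in fact a proof sketch for Theorem~\ref{thm-mult-semiconj}, the earlier statement whose hypothesis is a multiplier-preserving hybrid conjugacy.

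Moreover, the paper does not prove Theorem~\ref{thm-semiconj} at all: it is quoted from \cite{Inou-semiconj} (``the following theorem \cite{Inou-semiconj}''). There is therefore no paper proof to compare against for this particular statement.

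If your proposal is read as a proof of Theorem~\ref{thm-mult-semiconj}, then it matches exactly what the paper does: the paper simply remarks that Theorem~\ref{thm-mult-semiconj} ``is an easy consequence of these theorems,'' meaning Theorem~\ref{thm-sppu}, the NCP-implies-tame result, and Theorem~\ref{thm-semiconj}. Your worry about the non-NCP case is not addressed by the paper either; as you correctly note at the end, in the only application (the proof of the Main Theorem) the quadratic-like restrictions are hybrid equivalent to $z+z^2$, so the critical point lies in the parabolic basin and NCP is automatic.
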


Theorem~\ref{thm-mult-semiconj} is an easy consequence of these
theorems.


\section{Analytic families of polynomial-like mappings}

In this section, we briefly review the notion of analytic family of
polynomial-like mappings and its straightening map.
We also consider families with marked points.

\begin{defn}[AFPL]
 An \emph{analytic family of polynomial-like mappings}
 (abbr.\ \emph{AFPL}) of degree $d$ is a family
 $\f = (f_\mu:U_\mu' \to U_\mu)_{\mu \in
 \Lambda}$ of 
 polynomial-like mappings of degree $d$ parameterized by a complex
 manifold $\Lambda$ such that
 \begin{enumerate}
  \item $\mathcal{U}=\{(\mu,z);\ z \in U_\mu\}$ and
	$\mathcal{U}' = \{(\mu,z);\ z \in U_\mu'\}$ are
	homeomorphic over $\Lambda$ to $\Lambda \times \Delta$, where
	$\Delta$ is the unit disk;
  \item the projection from the closure of $\mathcal{U}'$ in
	$\mathcal{U}$ to $\Lambda$ is proper;
  \item the map $f:\mathcal{U}' \to \mathcal{U}$, $f(\mu,z) =
	(\mu,f_\mu(z))$ is holomorphic and proper.
 \end{enumerate}
 Let $\mathcal{C}(\f)=\{\mu \in \Lambda;\ K(f_\mu)$ is
 connected$\}$ be the \emph{connectedness locus} of $\f$. 
\end{defn}

\begin{defn}[External markings for AFPL]
 Let $\f = (f_\mu:U_\mu' \to U_\mu)_{\mu \in
 \Lambda}$ be an AFPL.
 An \emph{external marking for $\f$} is a family of paths
 $([\gamma_\mu])_{\mu \in \mathcal{C}(\f)}$ such that
 $(\mu,t) \mapsto \gamma_{\mu}(t)$ is continuous for $(\mu,t)
 \in \mathcal{C}(\f) \times (0,1]$ and $\gamma_\mu$ is an access for
 $f_\mu$ for each $\mu \in \mathcal{C}(\f)$.

 An \emph{externally marked AFPL} is a pair $(\f,[\gamma_\mu])$
 of an AFPL and an external marking for it.
\end{defn}

Notice that we only consider external markings for maps with connected
Julia sets.

\begin{rem}
 We do not require that $(\mu,t) \mapsto \gamma_\mu(t)$ is
 continuous on $\mathcal{C}(\f) \times [0,1]$.
 Indeed, consider the quadratic family $\bfQ=(Q_c(z)=z^2+c)_{c \in \C}$
 with standard external marking.
 Namely, let $\gamma_c(t) = \phi_c^{-1}(\exp(t))$ where $\phi_c$ is the
 B\"ottcher coordinate for $Q_c$,
 and consider $[\gamma_c]$ as an external marking.
 Then $(c,t) \mapsto \gamma_c(t)$ is not continuous at $(1/4,0)$
 because of the parabolic implosion (discontinuity of the filled Julia
 set), although $c \mapsto \gamma_c(0)$ is still continuous at $c=1/4$
 in this case.
 Thus it is not reasonable to require continuity at $t=0$.

 Moreover, the endpoint $\mu \mapsto \gamma_\mu(0)$ can even be discontinuous.
 For example, consider a family of cubic polynomials of the form
 $f_\mu(z) = z + \frac{1}{2}z(z-1)^2+\mu x$. When
 $\mu=0$, $1$ is a parabolic fixed points whose basin contains both of
 the critical points. 
 The holomorphic fixed point index for $1$ is $2$, hence it is
 parabolic-attracting. 
 In particular, if $\mu>0$ is sufficiently small, the parabolic fixed
 point $1$ splits into two attracting fixed points, hence $f_\mu$ has
 connected Julia set. Moreover, since $f_\mu$ is real, 
 the external ray of angle $0$ is the connected component of $\R
 \setminus K(f_\mu)$ containing sufficiently large real numbers.
 Hence 
 \[
  R_{f_\mu}(0) = 
 \begin{cases}
  (1,\infty) & \mu=0, \\
  (0,\infty) & \mu>0.
 \end{cases}
 \]
 So the landing point of $R_{f_\mu}(0)$ is $0$ for $\mu>0$, but $1$ for $\mu=0$.
\end{rem}

By the straightening theorem, we can naturally define a map from
the connectedness locus $\mathcal{C}(\f)$ for an externally marked AFPL
to $\mathcal{C}(d)$.
\begin{defn}[Straightening maps for AFPL]
 Let $\f=(f_\mu:U_\mu' \to U_\mu)_{\mu \in \Lambda}$ be
 an AFPL and $\Gamma=[\gamma_\mu]_{\mu \in \mathcal{C}(\f)}$ be an
 external marking.
 The \emph{straightening map} $\chi_{\f, \Gamma}:
 \mathcal{C}(\f) \to \mathcal{C}(d)$ is defined as follows:
 $\chi_{\f,\Gamma}(\mu)=g_\mu$ if $f_\mu:U_\mu' \to
 U_\mu$ is hybrid equivalent to $g_\mu$
 respecting the external markings (the external marking for $g_\mu$
 is the standard external marking).
\end{defn}
In the following, whenever we consider an AFPL, we fix an external
marking for each AFPL.
Hence we omit $\Gamma$ for simplicity and write
$\chi_{\f}$ instead of $\chi_{\f,\Gamma}$.

The following theorem is proved by Douady and Hubbard
\cite[Chapter~2, \S 7]{Douady-Hubbard-poly-like}.
\begin{thm}
 \label{thm-quad-conti}
 The straightening map for an AFPL of degree two
 is continuous and can be extended continuously on $\Lambda$.
 \marginpar{Continuous extension necessary?}
\end{thm}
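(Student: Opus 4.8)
\emph{Plan.} I would reprove this (the Douady--Hubbard theorem) by a quasiconformal surgery argument: for $\mu$ near a fixed $\mu_0\in\mathcal{C}(\f)$, realize the straightening $\chi_\f(\mu)$ as the integral of an $f_\mu$-invariant Beltrami coefficient depending continuously on $\mu$, and treat the escaping parameters $\mu\notin\mathcal{C}(\f)$ separately. The argument does not distinguish interior from boundary points of $\mathcal{C}(\f)$.

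\emph{Continuity on $\mathcal{C}(\f)$.} Fix $\mu_0\in\mathcal{C}(\f)$ and set $c_0=\chi_\f(\mu_0)\in\mathcal{C}(2)=\mathcal{M}$. Using condition~(i) in the definition of AFPL, after a continuous change of coordinates I may assume that over a neighbourhood $N$ of $\mu_0$ the domains are fixed model domains $U'\Subset U\subset\C$ and that $(\mu,z)\mapsto f_\mu(z)$ is holomorphic with $f_\mu\to f_{\mu_0}$ uniformly on $\overline{U'}$. First I build a degree-two quasiregular extension $\tilde f_\mu\colon\widehat{\C}\to\widehat{\C}$ of $f_\mu$ having a superattracting fixed point at $\infty$, by gluing $z\mapsto z^{2}$ outside $U$ through a quasiconformal interpolation supported on a fixed annulus about $\partial U'$; this can be arranged so that $\mu\mapsto\tilde f_\mu$ is continuous, $\tilde f_\mu$ is holomorphic off that annulus, and the external marking $[\gamma_\mu]$ is carried to the standard marking of the quadratic polynomial we shall produce. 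Spreading the standard structure on $\widehat{\C}\setminus U$ inward by pullback under $\tilde f_\mu$ gives a $\tilde f_\mu$-invariant Beltrami coefficient $\sigma_\mu$ with $\sigma_\mu\equiv0$ on $K(f_\mu)$ and $\|\sigma_\mu\|_\infty$ uniformly bounded away from $1$ on $N$. Integrating $\sigma_\mu$ by the quasiconformal map $\Psi_\mu$ normalized to fix $\infty$ with derivative $1$, the conjugate $\Psi_\mu\circ\tilde f_\mu\circ\Psi_\mu^{-1}$ is a degree-two rational map with a critical fixed point at $\infty$, hence a quadratic polynomial, which after affine normalization is $Q_{c_\mu}$ with $c_\mu=\chi_\f(\mu)$; indeed $\Psi_\mu|_{K(f_\mu)}$ is conformal, so $\Psi_\mu$ is a hybrid conjugacy, and it respects markings by the choice of gluing. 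Since $\tilde f_\mu\to\tilde f_{\mu_0}$ uniformly, every finite-depth pullback converges, and both $\sigma_\mu$ and $\sigma_{\mu_0}$ are small near the filled Julia set, so $\sigma_\mu\to\sigma_{\mu_0}$ almost everywhere with uniformly bounded norm; by continuous dependence of solutions in the measurable Riemann mapping theorem, $\Psi_\mu\to\Psi_{\mu_0}$ uniformly on $\widehat{\C}$, so $Q_{c_\mu}\to Q_{c_0}$ and hence $c_\mu\to c_0$. Thus $\chi_\f$ is continuous on $\mathcal{C}(\f)$.

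\emph{Extension to $\Lambda$.} For $\mu\notin\mathcal{C}(\f)$ the critical point of $f_\mu$ escapes, and the same surgery produces a degree-two polynomial-like map with totally disconnected filled Julia set, hybrid equivalent to $Q_{c_\mu}$ for a unique $c_\mu\in\C\setminus\mathcal{M}$; equivalently $c_\mu=\Phi_{\mathcal{M}}^{-1}(\rho_\mu e^{2\pi i\vartheta_\mu})$, where $\rho_\mu>1$ and $\vartheta_\mu$ are the modulus and argument of the image of the critical value of $\tilde f_\mu$ under the B\"ottcher coordinate at $\infty$, and $\Phi_{\mathcal{M}}$ is the Douady--Hubbard uniformization of $\widehat{\C}\setminus\mathcal{M}$. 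This extension is holomorphic on $\Lambda\setminus\mathcal{C}(\f)$, hence continuous there. Running the Beltrami-coefficient argument with $\mu_0\in\mathcal{C}(\f)$ and $\mu$ allowed to escape shows $\rho_\mu\to1$ and $c_\mu\to c_0$, so the map $\Lambda\to\C$ obtained by gluing the two pieces is continuous.

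\emph{Main obstacle.} I expect the real difficulty to be continuity at $\partial\mathcal{C}(\f)$, where the family is not $J$-stable and the filled Julia sets and hybrid conjugacies may degenerate (parabolic implosion), so no compactness argument applied directly to the conjugacies can work. The surgery argument circumvents this: straightening is insensitive to quasiconformal modifications supported off the filled Julia set, while $\sigma_\mu$ varies continuously because it is manufactured entirely from the dynamics on the perfectly tame annulus near $\partial U'$ together with the holomorphically varying map $f_\mu$. Degree two is used precisely here: with a single critical point the target is the one-parameter family $Q_c$, determined by the single number $c_\mu$, leaving no room for colliding critical orbits whose splitting under perturbation would make $c_\mu$ jump --- which is exactly the mechanism the Main Theorem exploits when $d\ge3$.
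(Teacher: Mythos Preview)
The paper does not give its own proof of this theorem; it attributes it to Douady--Hubbard and explains that their argument rests on Lemma~\ref{lem-qc-conj} (a compactness argument: hybrid conjugacies $\psi_n$ are uniformly $K$-quasiconformal, so a subsequential limit $\psi$ is a quasiconformal conjugacy between $f_\mu$ and any limit point $P$ of $\chi(\mu_n)$) together with the quasiconformal rigidity of quadratic polynomials on $\partial\mathcal{M}$, which forces $P=\chi(\mu)$. Your route---make the surgery Beltrami coefficient $\sigma_\mu$ depend continuously on $\mu$ and invoke parameter dependence in the measurable Riemann mapping theorem---is genuinely different.

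There is a real gap, exactly at the place you flag as the main obstacle. The claim that $\sigma_\mu\to\sigma_{\mu_0}$ almost everywhere fails when $f_{\mu_0}$ has a parabolic cycle. Fix $z$ in the parabolic basin, so $\sigma_{\mu_0}(z)=0$. Under parabolic implosion there are sequences $\mu_n\to\mu_0$ in $\mathcal{C}(\f)$ along which $z$ escapes for $f_{\mu_n}$ (indeed $\operatorname{area}K(f_{\mu_n})$ can drop to $0$); then $\sigma_{\mu_n}(z)$ is the pullback of the dilatation on the interpolation annulus by a long composition of \emph{holomorphic} maps, so $|\sigma_{\mu_n}(z)|$ equals that dilatation and does not tend to $0$. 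Hence $\sigma_{\mu_n}\not\to\sigma_{\mu_0}$ on a set of positive measure, and the appeal to continuous dependence of $\Psi_\mu$ breaks down. The assertion that ``$\sigma_\mu$ and $\sigma_{\mu_0}$ are small near the filled Julia set'' is false in $L^\infty$, and fails in $L^1$ for the same area reason. Ironically, what you dismiss---``no compactness argument applied directly to the conjugacies can work''---is precisely Douady--Hubbard's proof: one \emph{does} pass to limits of conjugacies, accepts that the limit is only quasiconformal rather than hybrid, and then uses the degree-two rigidity to conclude. Your surgery construction is fine for defining $\chi$ and gives continuity at hyperbolic parameters, but it does not by itself get past parabolic implosion.
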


This theorem depends on the following lemma and quasiconformal rigidity 
of quadratic polynomials in the boundary of the Mandelbrot set (see
also Theorem~\ref{thm-conti}).
Namely, for any $f \in \partial \mathcal{C}(2)$, if $g \in
\mathcal{C}(2)$ is quasiconformally conjugate to $f$, then $g=f$.

\begin{lem}[{\cite[Chapter~2, \S 7]{Douady-Hubbard-poly-like}}]
 \label{lem-qc-conj}
 Consider an analytic family $\f = (f_\mu:U_\mu' \to
 U_\mu)_{\mu \in \Lambda}$ of polynomial-like mappings of degree $d$
 and let $\chi_{\f}: \mathcal{C}(\f) \to \mathcal{C}(d)$ be its
 straightening map.
 
 Assume $\mu_n \to \mu$ in $\mathcal{C}(\f)$ and $\chi_{\f}(\mu_n)$
 converges to some $P \in \mathcal{C}(d)$.
 Then there exist $K \ge 1$ independent of $n$ and a
 $K$-quasiconformal hybrid conjugacy $\psi_n$ between $f_{\mu_n}$ and
 $\chi_{\f}(\mu_n)$ such that
 $\psi_n$ converges uniformly to a $K$-quasiconformal conjugacy $\psi$
 between $f_\mu$ and $P$ by passing to a subsequence.
 In particular, $\chi_{\f}(\mu)$ and $P$ are quasiconformally equivalent.
\end{lem}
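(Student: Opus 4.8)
The plan is a normal-family argument built on the \emph{quantitative} content of the Douady--Hubbard straightening construction, together with the product structure of an analytic family of polynomial-like mappings.

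First I would pass to uniformly thick fundamental annuli. Since $K(f_\mu) \Subset U_\mu$, choose open sets $W' = f_\mu^{-1}(W) \Subset W \Subset U_\mu$ with $K(f_\mu;W',W)=K(f_\mu)$ and $\operatorname{mod}(W\setminus\overline{W'})>0$. Because $\mathcal{U}$ and $\mathcal{U}'$ are homeomorphic over $\Lambda$ to $\Lambda\times\Delta$ and $f$ is holomorphic and proper, for $n$ large one obtains open sets $W_n'=f_{\mu_n}^{-1}(W_n)\Subset W_n\Subset U_{\mu_n}$ with $W_n\to W$ and $W_n'\to W'$, so that $f_{\mu_n}\colon W_n'\to W_n$ is a polynomial-like restriction of degree $d$ with $\operatorname{mod}(W_n\setminus\overline{W_n'})\ge m_0>0$ uniformly in $n$. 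By upper semicontinuity of the filled Julia set in an analytic family, $K(f_{\mu_n})\subset W_n$ for $n$ large, so $K(f_{\mu_n};W_n',W_n)=K(f_{\mu_n})$ and $\chi(\mu_n)$ is unchanged by this shrinking.

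Second, I would recall that the proof of the straightening theorem (Theorem~\ref{thm-DH-straightening}) produces the hybrid conjugacy to a monic centered polynomial as a $K$-quasiconformal homeomorphism of $\C$ that is conformal off $W_n$, is tangent to the identity at $\infty$ up to a bounded additive constant, and whose dilatation $K=K(m_0)$ depends only on the lower bound $m_0$ for the modulus of the fundamental annulus. By the uniqueness statement in that theorem (applied with the standard external markings inherited from the fixed marking $\Gamma$ of $\f$), this polynomial is $\chi(\mu_n)$. Thus we obtain $K$-quasiconformal hybrid conjugacies $\psi_n$ with $\psi_n\circ f_{\mu_n}=\chi(\mu_n)\circ\psi_n$, the bound $K$ being independent of $n$. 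After a harmless affine normalization the $\psi_n$ form a normal family of normalized $K$-quasiconformal self-maps of $\hat\C$, so along a subsequence $\psi_n\to\psi$ locally uniformly, with $\psi$ a $K$-quasiconformal homeomorphism — the normalization at $\infty$ prevents the limit from degenerating to a constant. On a fixed neighborhood of $K(f_\mu)$ we have $f_{\mu_n}\to f_\mu$ uniformly (holomorphic dependence on $\mu$) and $\chi(\mu_n)\to P$ by hypothesis, so passing to the limit in $\psi_n\circ f_{\mu_n}=\chi(\mu_n)\circ\psi_n$ gives $\psi\circ f_\mu=P\circ\psi$; hence $\psi$ is a $K$-quasiconformal conjugacy between $f_\mu$ and $P$. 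Finally, since $\chi(\mu)$ is by definition hybrid equivalent — in particular quasiconformally equivalent — to $f_\mu$, composing with $\psi$ shows $\chi(\mu)$ and $P$ are quasiconformally equivalent.

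The hard part is the bookkeeping in the first two steps: extracting from the $\Lambda\times\Delta$ structure the nested restrictions $W_n'\Subset W_n$ with uniformly thick annuli and with unchanged filled Julia sets, and invoking the precise dilatation estimate in the straightening construction so that the constant $K$ does not depend on $n$. Once the $\psi_n$ are $K$-quasiconformal with a common $K$ on a controlled domain, the limiting argument is routine. Note that the limit $\psi$ is asserted only to be a quasiconformal (not hybrid) conjugacy, which is all that is needed and all that survives the limit, so no additional control of $\bar{\partial}\psi_n$ on $K(f_{\mu_n})$ is required.
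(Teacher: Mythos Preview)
Your proposal is correct and follows the standard Douady--Hubbard argument; the paper itself does not give a proof of this lemma but attributes it to \cite{Douady-Hubbard-poly-like}, and what you have written is precisely a sketch of that classical proof (uniform modulus bound from the analytic family structure, dilatation estimate depending only on the modulus, compactness of normalized $K$-quasiconformal maps, passage to the limit in the conjugacy equation).
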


However, quasiconformal rigidity does not hold for
polynomials of higher degree in the bifurcation locus.
For example, if the basin of a parabolic periodic orbit contains two or
more critical points with distinct grand orbits, then you can deform it
quasiconformally to another polynomial.
Discontinuity of straightening maps is caused by such a lack of
quasiconformal rigidity and Douady and Hubbard used such a parabolic
polynomial to construct an example of discontinuous straightening map
\cite[Chapter 3, \S 4]{Douady-Hubbard-poly-like}.

\begin{defn}[Marked points, AFPL($n$)MP]
 Let $\f_0=(f_\mu:U_\mu' \to U_\mu)$ be an AFPL.
 A \emph{marked point} $x_\mu$ for $\f$ 
 is a holomorphic map $x:\Lambda \to \C$ such that $x_\mu=x(\mu)
 \in U_\mu'$.

 An \emph{analytic family of polynomial-like mappings with a marked
 point} (abbr.\ \emph{AFPLMP}) is a family 
 \[
  \f=(f_\mu:U_\mu' \to U_\mu, x_\mu)_\mu \in
 \Lambda
 \]
 such that $\f_0=(f_\mu:U_\mu' \to
 U_\mu)$ is an AFPL and $x_\mu$ is a marked point for $\f_0$.

 Let us denote 
 \[
  \mathcal{CK}(\f)=\{\mu \in \Lambda;\ K(f_\mu)
  \mbox{ is connected and } x_\mu \in K(f_\mu)\}.
 \]
 The \emph{straightening map} $\chi_\f :\mathcal{CK}(\f) \to
 \mathcal{CK}(d)$, where 
 \[
  \mathcal{CK}(d)=\{(g,z);\ g \in \mathcal{C}(d) \mbox{ and } z \in K(g)\}
 \subset \poly(d) \times \C,
 \]
  is defined as follows.
 Let $\chi_\f(\mu)=(g_\mu,z_\mu)$ when $f_\mu:U_\mu' \to
 U_\mu$ is hybrid equivalent to $g_\mu$ by a hybrid conjugacy
 $\psi$ and $\psi(x_\mu)=z_\mu$ (note that
 $\psi|_{K(f_\mu)}$ is unique under the assumption that $\psi$ respects
 external markings).

 We need also consider an \emph{analytic family of polynomial-like
 mappings with two marked points} (abbr.\ \emph{AFPL2MP}).
 More generally, for $n \ge 1$,
 we say a family
 \[
  \f=(f_\mu:U_\mu' \to U_\mu, x_{1,\mu},\dots,x_{n,\mu})_{\mu \in
 \Lambda}
 \]
 is an \emph{AFPL$n$MP} if $\f_0=(f_\mu)_{\mu \in \Lambda}$ is
 an AFPL and $x_1,\dots,x_n$ are marked points for $\f_0$.
 (equivalently, $\f_k =(f_\mu,x_{k,\mu})_{\mu \in
 \Lambda}$ is AFPLMP for any $k=1,\dots,n$).
 We can similarly define the \emph{straightening map} as follows:
 Let 
 \begin{align*}
  \mathcal{CK}(\f) &=\bigcap_{k=1}^n \mathcal{CK}(\f_k), \\
  \mathcal{CK}^n(d) &= \{(g,z_1,\dots,z_n);\ g \in \mathcal{C}(d),\ 
  z_k \in K(g) \mbox{ for }k=1,\dots,n\},
 \end{align*}
 and define $\chi_\f:\mathcal{CK}(\f) \to \mathcal{CK}^n(d)$ by
 $\chi_\f(\mu)=(g,z_1,\dots,z_n)$ when $\chi_{\f_k}(\mu)=(g,z_k)$.
\end{defn}

We will discuss continuity of straightening maps for AFPL2MP in
Section~\ref{thm-conti-mult}.


\section{Parabolic implosion}

Here we recall the notion of parabolic implosion and geometric limit.
For more details, see \cite{Douady-Julia-set},
\cite{Douady-Sentenac-Zinsmeister}, \cite{Shishikura-HD2} and
\cite{Shishikura-bifurcation}.

Let $f_0$ be a holomorphic map defined near $0$.
Assume $0$ is a non-degenerate $1$-parabolic fixed point, that is,
$f_0(0)=0$, $f_0'(0)=1$ and $f_0''(0)\ne 0$.
By a change of coordinate, we may assume $f_0$ has the form
\[
 f_0(z) = z + z^2 + O(z^3) \quad (z \to 0).
\]
For $\varepsilon>0$, consider two disks
\begin{align*}
 D_{f_0,\attr}
 &= \{z \in \C;\ |z+\varepsilon|<\varepsilon\}, &
 D_{f_0,\rep} 
 &= \{z \in \C;\ |z-\varepsilon|<\varepsilon\}. \\
 \intertext{%
 If $\varepsilon$ is sufficiently small, then
 }
 f(D_{f_0,\attr}) &\subset D_{f_0,\attr}, &
 f(D_{f_0,\attr}) &\supset D_{f_0,\attr}, \\
 \intertext{and there exist conformal maps}
 \Phi_{f_0,\attr}&:D_{f_0,\attr} \to \C, &
 \Phi_{f_0,\rep}&:D_{f_0,\rep} \to \C
\end{align*}
satisfying the Abel equation:
\begin{equation}
 \label{eqn-Fatou-coord}
  \Phi_{f_0,*}(f_0(z))=\Phi_{f_0,*}(z)+1 \quad (*=\attr,\rep),
\end{equation}
where both sides are defined.
We call $\Phi_{f_0,\attr}$ (resp.\ $\Phi_{f_0,\rep}$) an 
\emph{attracting Fatou coordinate} (resp.\ \emph{repelling Fatou
coordinate}) for $f_0$.
They are unique up to post-composition by translation.
If $f$ is a rational map or an entire map, we can extend Fatou
coordinates by the functional equation \eqref{eqn-Fatou-coord}: 
\begin{itemize}
 \item The domain of $\Phi_{f_0,\attr}$ can be extended to the whole
       basin of attraction $B_0$ of $0$.
 \item The domain of $\Psi_{f_0,\rep}=\Phi_{f_0,\rep}^{-1}$ can be
       extended to the whole complex plane $\C$.
\end{itemize}
For $\tau \in \C$, let us define $g_{f_0,\tau}:B_0 \to \C$ by
\[
 g_{f_0,\tau}(z) = \Psi_{f_0,\rep}(\Phi_{f_0,\attr}(z)+\tau).
\]
Then $g_{f_0,\tau}$ commutes with $f_0$, i.e.,
$g_{f_0,\tau} \circ f_0 =f_0 \circ g_{f_0,\tau}$.
We call $g_{f_0,\tau}$ a \emph{Lavaurs map} of $f$ and we call $\tau$ the
\emph{phase}%
\footnote{It is often called the \emph{lifted} phase, but since we do
not need the ``unlifted'' phase (an element of $\C/\Z$), we simply call
it phase here.}
 of $g_{f_0,c}$.

Let $f$ be a small perturbation of $f_0$.
By taking an affine conjugacy,
we may assume $0$ is still a fixed point for $f$.
Let us denote $f'(0)=\exp(2\pi i \alpha)$ with $\alpha$ small.
Here we consider the case $\alpha \ne 0$ and $|\arg \alpha|<\pi/4$ (or
$|\arg (-\alpha)|<\pi/4$).
Let $x$ be the other fixed point for $f$ close to $0$ (bifurcated from
$0$).
Then $x=-2\pi i \alpha(1+o(1))$ as $f \to f_0$.
Let $D_{f,\attr}$ and $D_{f,\rep}$ be the disks of radii $\varepsilon$
whose boundaries pass through $0$ and $x$ such that $D_{f,\attr}$
intersects the negative real axis and $D_{f,\rep}$ intersects the
positive real axis, so that $D_{f,*} \to D_{f_0,*}$ as $f \to f_0$.
Then there exists a conformal map $\Phi_f$ defined on $D_{f,\attr} \cup
D_{f,\rep}$ such that $\Phi_f(f(z))=\Phi_f(z)+1$.
We call $\Phi_f$ a \emph{Fatou coordinate} for $f$. It is also
unique up to post-composition by translation.
Fatou coordinates depend continuously on $f$ if we normalize properly.
More precisely, what we need is the following fact:
If $f_n \to f_0$, then there exist sequences $c_n$ and
$C_n$ of complex numbers such that 
\begin{align*}
 \Phi_{f_n}(z)+c_n &\to \Phi_{f_0,\attr}(z)\mbox{ on }D_{f_0,\attr}, &
 \Phi_{f_n}(z)+C_n &\to \Phi_{f_0,\rep}(z)\mbox{ on }D_{f_0,\rep}.
\end{align*}
as $n \to \infty$. Hence we have for $z \in B_0$
\begin{align*}
 f_n^k(z) &= \Phi_{f_n}^{-1}(\Phi_{f_n}(z)+k) \\
 &= \Phi_{f_n}^{-1}(\Phi_{f_n}(z)+c_n+(k-c_n+C_n)-C_n).
\end{align*}

Now assume $c_n - C_n$ converges in $\C/\Z$, i.e., assume that there exists a
sequence $k_n \in \Z$
such that 
\[
 \lim_{n \to \infty} k_n-c_n+C_n=\tau \in \C.
\]
Then we get a local uniform convergence on $B_0$:
\[
 f_n^{k_n}(z) \to g_{f_0,\tau}(z).
\]
We say that \emph{$f_n$ converges geometrically to
$(f_0,g_{f_0,\tau})$} and denote 
\[
 f_n \xrightarrow{\geom}(f_0,g_{f_0,\tau}).
\]


\section{Continuous straightening maps and multipliers}

The following theorem relates continuity of straightening map to a
condition on multipliers, and is the key to get discontinuity of
straightening maps.
\begin{thm}
 \label{thm-conti-mult}
 Let $\f=(f_\mu:U_\mu' \to U_\mu,
 x_\mu,y_\mu)_{\mu \in \Lambda}$
 be an AFPL2MP of degree $d \ge 2$.
 Assume 
 \begin{enumerate}
  \item for any $\mu \in \Lambda$, $0$ is a fixed point for
	$f_\mu$;
  \item $\alpha_\mu$ is a marked repelling periodic point for $f_\mu$;
  \item $0$ is a non-degenerate $1$-parabolic
	fixed point for $\mu=\mu_0$;
  \item $x_{\mu_0}=y_{\mu_0}$ and $x_{\mu_0}$ lies in the basin of $0$
	for $f_{\mu_0}$;
  \item there exist sequences 
	$\mu_n \xrightarrow{n \to \infty} \mu_0$ and
	$\mu_{n,m} \xrightarrow{m \to \infty} \mu_n$ such that
  \begin{itemize}
   \item $\mu_n, \mu_{n,m} \in \mathcal{CK}(\f)$;
   \item $x_{\mu_n} \ne y_{\mu_n}$ for $n \ge 1$;
   \item $0$ is a non-degenerate $1$-parabolic fixed point for
	 $f_{\mu_n}$;
   \item $f_{\mu_{n,m}} \xrightarrow{\geom}
	 (f_{\mu_n},g_n)$ as $m \to \infty$ for some Lavaurs map
	 $g_n$ such that $g_n(x_{\mu_n})=\alpha_{\mu_n}$.
	 In particular, $0$ is no more a parabolic fixed point for
	 $f_{\mu_{n,m}}$.
   \item $g_n \to g$ for some Lavaurs map $g$ for $f_{\mu_0}$ such
	 that $g'(x_{\mu_0})\ne 0$.
  \end{itemize}
  \item \label{item-conti}
	$\chi_\f(\mu_{n,m}) \to \chi_\f(\mu_n)$
	as $m \to \infty$
	and $\chi_\f(\mu_n) \to \chi_\f(\mu_0)$
	as $n \to \infty$.
 \end{enumerate}
 Then
 \[
 |\mult_{f_{\mu_0}}(\alpha_{\mu_0})| = 
 |\mult_{P_{\mu_0}}(\psi_{\mu_0}(\alpha_{\mu_0}))|,
 \]
 where $\chi_\f(\mu)=(P_\mu,x_\mu^P,y_\mu^P)$ and
 $\psi_\mu$ is a hybrid conjugacy between $f_\mu$ and $P_\mu$.
\end{thm}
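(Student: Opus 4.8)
My plan is to combine three ingredients: compactness of families of $K$‑quasiconformal maps (Lemma~\ref{lem-qc-conj}), the Douady–Lavaurs description of geometric limits, and the fact that a hybrid conjugacy is conformal on the interior of the filled Julia set.

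\emph{Normalization.} First I would apply Lemma~\ref{lem-qc-conj} along $\mu_{n,m}\to\mu_n$ and then along $\mu_n\to\mu_0$, and pass to subsequences, so that the hybrid conjugacies converge uniformly, $\psi_{\mu_{n,m}}\to\psi_{\mu_n}$ and $\psi_{\mu_n}\to\psi_{\mu_0}$, all with one common dilatation bound; by uniqueness of the marking‑respecting hybrid conjugacy on the filled Julia set, these limits are the $\psi_{\mu_n},\psi_{\mu_0}$ of the statement. Since $\psi_{\mu_n}$ is conformal on $\Int K(f_{\mu_n})$, it maps the parabolic basin $B_0$ of $0$ for $f_{\mu_n}$ conformally onto a basin of $P_{\mu_n}$. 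A geometric‑limit perturbation $f_{\mu_{n,m}}$ has a weakly attracting fixed point $a_{n,m}$ near $0$ (namely $0$, or the fixed point bifurcating from it) with $a_{n,m}\to 0$ and $a_{n,m}\in\Int K(f_{\mu_{n,m}})$; hence $P_{\mu_{n,m}}$ has the attracting fixed point $\psi_{\mu_{n,m}}(a_{n,m})$ with the same multiplier, and letting $m\to\infty$ shows $P_{\mu_n}$ has a non‑degenerate $1$‑parabolic fixed point $0^P_n=\psi_{\mu_n}(0)$ and that $P_{\mu_{n,m}}\to P_{\mu_n}$ is again a parabolic‑implosion perturbation. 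Finally, from $g_n\to g$ with $g'(x_{\mu_0})\ne 0$ I get $g_n'(x_{\mu_n})\ne 0$ for all large $n$, so $g_n$ is univalent near $x_{\mu_n}\in B_0$; fix such an $n$.

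\emph{The Lavaurs map straightens (the crux).} Because $a_{n,m}\to 0$, the immediate basin of $a_{n,m}$ contains any given compact $L\subset B_0$ once $m$ is large, and there $f_{\mu_{n,m}}^{k_m}\to g_n$ locally uniformly; since $L\subset\Int K(f_{\mu_{n,m}})$, the conjugacy identity $\psi_{\mu_{n,m}}\circ f_{\mu_{n,m}}^{k_m}=P_{\mu_{n,m}}^{k_m}\circ\psi_{\mu_{n,m}}$ is valid on $L$, and together with $\psi_{\mu_{n,m}}\to\psi_{\mu_n}$ this shows $\{P_{\mu_{n,m}}^{k_m}\}_m$ is uniformly bounded on compact subsets of $B_0(P_{\mu_n})$. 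Hence the family is normal, and any subsequential locally uniform limit $G$ satisfies $G\circ\psi_{\mu_n}=\psi_{\mu_n}\circ g_n$ on $B_0$; in particular $G(\psi_{\mu_n}(x_{\mu_n}))=\psi_{\mu_n}(\alpha_{\mu_n})=:\alpha^P_{\mu_n}$. By the Douady–Lavaurs dichotomy for perturbations of a parabolic map, $G$ is either a point of the parabolic orbit or a Lavaurs map of $P_{\mu_n}$; as $\alpha^P_{\mu_n}\ne 0^P_n$, the first case is excluded, so $G=g_n^P$ is a Lavaurs map holomorphic on $B_0(P_{\mu_n})$. (Since all subsequential limits agree, in fact $P_{\mu_{n,m}}^{k_m}\to g_n^P$.) Thus
\[
 \psi_{\mu_n}\circ g_n=g_n^P\circ\psi_{\mu_n}\qquad\text{on }B_0 .
\]

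\emph{Conclusion.} The right‑hand side above is holomorphic on $B_0$ (the conformal map $\psi_{\mu_n}|_{B_0}$ followed by the holomorphic Lavaurs map $g_n^P$), hence so is $\psi_{\mu_n}\circ g_n$; since $g_n$ is univalent near $x_{\mu_n}$, this forces $\psi_{\mu_n}$ to be conformal on a neighbourhood of $\alpha_{\mu_n}=g_n(x_{\mu_n})$, even though $\alpha_{\mu_n}\in J(f_{\mu_n})$. Differentiating the conjugacy relation $\psi_{\mu_n}\circ f_{\mu_n}^{p}=P_{\mu_n}^{p}\circ\psi_{\mu_n}$ (where $p$ is the period of $\alpha_{\mu_n}$) at $\alpha_{\mu_n}$ then gives $\mult_{f_{\mu_n}}(\alpha_{\mu_n})=\mult_{P_{\mu_n}}(\alpha^P_{\mu_n})$, with $\alpha^P_{\mu_n}$ a repelling periodic point of $P_{\mu_n}$ of period $p$. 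Letting $n\to\infty$: $\mult_{f_{\mu_n}}(\alpha_{\mu_n})\to\mult_{f_{\mu_0}}(\alpha_{\mu_0})$ by holomorphic dependence of repelling cycles, while $\alpha^P_{\mu_n}=\psi_{\mu_n}(\alpha_{\mu_n})\to\psi_{\mu_0}(\alpha_{\mu_0})$, which is again a repelling periodic point of $P_{\mu_0}$ of period $p$ (a limit of repelling cycles with multipliers bounded off the unit circle cannot drop period), so $\mult_{P_{\mu_n}}(\alpha^P_{\mu_n})\to\mult_{P_{\mu_0}}(\psi_{\mu_0}(\alpha_{\mu_0}))$. The identity passes to the limit, yielding in particular the asserted equality of moduli. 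The one genuinely delicate point is the middle step: the device that makes the geometric limit transfer through the conjugacy is that a geometric‑limit perturbation replaces the parabolic basin by a true attracting basin of $f_{\mu_{n,m}}$, on whose interior $\psi_{\mu_{n,m}}$ is conformal, which upgrades the conjugacy relation from $K(f_{\mu_{n,m}})$ to the open set $B_0$ and hence produces conformality of $\psi_{\mu_n}$ at the Julia‑set point $\alpha_{\mu_n}$.
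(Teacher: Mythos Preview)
Your argument has a genuine gap at its linchpin step. You assert that a geometric-limit perturbation $f_{\mu_{n,m}}$ ``has a weakly attracting fixed point $a_{n,m}$ near $0$'' whose immediate basin eventually contains any compact $L\subset B_0$, and hence $L\subset\Int K(f_{\mu_{n,m}})$. Nothing in the hypotheses guarantees this, and in the paper's intended application it is false: the $f_{\mu_{n,m}}$ are Misiurewicz, so $K(f_{\mu_{n,m}})$ has empty interior. Parabolic implosion is precisely the phenomenon that $K(f_{\mu_{n,m}})$ \emph{implodes} relative to $K(f_{\mu_n})$; compacts in the parabolic basin $B_0$ typically lie \emph{outside} $K(f_{\mu_{n,m}})$, where $\psi_{\mu_{n,m}}$ has no reason to be conformal. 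Consequently your identification ``$\psi_{\mu_{n,m}}\to\psi_{\mu_n}$'' fails: Lemma~\ref{lem-qc-conj} only gives a subsequential limit $\varphi_n$ that is a \emph{quasiconformal} conjugacy, not a hybrid one, and uniqueness of hybrid conjugacies does not apply. The relation you obtain is $\varphi_n\circ g_n=g_n^P\circ\varphi_n$ with $\varphi_n$ possibly non-conformal on $B_0$, from which conformality of anything at the Julia point $\alpha_{\mu_n}$ does not follow. A telltale sign: your argument never uses the second marked point $y_{\mu_n}$, yet the two-marked-point structure is essential.

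The paper's proof proceeds quite differently. It keeps $\varphi_n$ and $\psi_{\mu_n}$ distinct, showing only that they agree on $J(f_{\mu_n})$ and at the two marked points $x_{\mu_n},y_{\mu_n}$ (the latter via hypothesis~\ref{item-conti}). It then introduces the logarithmic distortion $\delta_n(w)=\log\bigl|(\varphi_n(w)-\alpha_{\mu_n}^P)/(w-\alpha_{\mu_n})\bigr|$ and proves (Lemma~\ref{lem-dist-diverge}) that $\delta_n(w)\to\pm\infty$ as $w\to\alpha_{\mu_n}$ unless the moduli of the multipliers agree; this is a H\"older-exponent statement, not conformality. Finally it evaluates $\delta_n$ at $w=g_n(y_{\mu_n})$, rewrites it via $\varphi_n\circ g_n=g_n^P\circ\varphi_n$ and $\varphi_n(x_{\mu_n})=\psi_{\mu_n}(x_{\mu_n})$, $\varphi_n(y_{\mu_n})=\psi_{\mu_n}(y_{\mu_n})$, and uses that the \emph{hybrid} conjugacy $\psi_{\mu_n}$ is holomorphic at $x_{\mu_n},y_{\mu_n}\in\Int K(f_{\mu_n})$ together with $g'(x_{\mu_0})\ne 0$ to bound $\delta_n(g_n(y_{\mu_n}))$ uniformly in $n$. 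The contradiction then forces the multiplier equality. The second marked point is exactly what supplies a nontrivial difference quotient approaching $\alpha_{\mu_n}$ along which $\delta_n$ can be controlled.
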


Roughly speaking, if the moduli of the multipliers of the corresponding
repelling periodic points $\alpha_{\mu_0}$ and $\alpha_{\mu_0}^P$ are
different and there are plenty of perturbations in $\mathcal{C}(\f)$, 
then the straightening map $\chi_{\f}$ is discontinuous.

The rest of this section is devoted for the proof of this theorem.
We may assume that the hybrid conjugacy
$\psi_{\mu_{n,m}}$ converges to a quasiconformal conjugacy $\varphi_n$
between $f_{\mu_n}$ and $P_{\mu_n}$ as $m \to \infty$ by
Lemma~\ref{lem-qc-conj}.
Then by the continuity \ref{item-conti}, we have
\[
 \varphi_n(x_{\mu_n}) = \lim_{m \to
 \infty}\psi_{\mu_{n,m}}(x_{\mu_{n,m}})
 = \psi_{\mu_n}(x_{\mu_n})=x_{\mu_n}^P,
\]
and similarly we have $\varphi_n(y_{\mu_n})=y_{\mu_n}^P$.

On the other hand, since $f_{\mu_{n,m}} \xrightarrow{\geom}
(f_{\mu_n},g_n)$, there exists a sequence $(k_{n,m})$ such
that 
\[
 f_{\mu_{n,m}}^{k_{n,m}} \to g_n.
\]
This implies that, if $w$ satisfies that
$f_{\mu_{n,m}}^{k_{n,m}}(\psi_{\mu_{n,m}}^{-1}(w))$ lies in the
definition of $\varphi_n$ for sufficiently large $n$, we have
\[
 P_{\mu_{n,m}}^{k_{n,m}}(w)=
 \psi_{\mu_{n,m}} \circ f_{\mu_{n,m}}^{k_{n,m}} \circ
 \psi_{\mu_{n,m}}^{-1} (w) 
 \to g_n^P (w) := \varphi_n \circ g_n \circ \varphi_n^{-1} (w),
\]
namely, $P_{\mu_{n,m}}$ geometrically converges to 
$(P_{\mu_n},g_n^P)$.
Then 
\begin{align*}
 g_n^P \circ \psi_{\mu_n}(x_{\mu_n})
 &= g_n^P \circ \varphi_n (x_{\mu_n}) \\
 &= \varphi_n(g_n(x_{\mu_n})) \\
 &=  \varphi_n(\alpha_{\mu_n})
\end{align*}
is a repelling periodic point for $P_{\mu_n}$.
Let us denote it by $\alpha^P_{\mu_n}$.
Then $\alpha^P_{\mu_n}=\psi_{\mu_n}(\alpha_{\mu_n})$.
In fact, the proof of \cite[p. 302, Lemma~1]{Douady-Hubbard-poly-like}
can be applied to our case to show that $\psi_{\mu_n}=\varphi_n$ on
the Julia set.
Observe that the combinatorial assumption there holds because 
the images of the unique parabolic basin of $0$ by them are the same.

Now let
\[
 \delta_n(w) =
 \log \left| \frac{\varphi_n(w)-\alpha^P_{\mu_n}}
 {w-\alpha_{\mu_n}} \right|.
\]
By passing to a further subsequence, we may assume $\varphi_n$ also
converges as $n \to \infty$.
Then we have the following ``distortion'' property for $\varphi_n$ at
$\alpha_{\mu_n}$:
\begin{lem}
 \label{lem-dist-diverge}
 Let
 $a_n=|\mult_{f_{\mu_n}}(\alpha_{\mu_n})|$ and 
 $b_n=|\mult_{P_{\mu_n}}(\alpha^P_{\mu_n})|$.
 Then
 \begin{equation}
  \label{eqn-delta}
   \delta_n(w) = \frac{\log b_n-\log a_n}
   {\log a_n}\log|w-\alpha_{\mu_n}| + O(1)
 \end{equation}
 as $w \to \alpha_{\mu_n}$ uniformly on $n$.
 In particular, if $|\mult_{f_{\mu_0}}(\alpha_{\mu_0})| \ne
 |\mult_{P_{\mu_0}}(\alpha^P_{\mu_0})|$, 
 then $\delta_n(w)$ diverges as $w \to \alpha_{\mu_n}$ uniformly on
 sufficiently large $n$.
\end{lem}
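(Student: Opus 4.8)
The approach will be to reduce everything to the linear model near the two repelling periodic orbits via Koenigs linearization, and then to read off the growth of $\delta_n$ from the functional equation that the quasiconformal conjugacy satisfies in those coordinates. Let $p$ be the period of the marked point $\alpha_\mu$, and set $F_n=f_{\mu_n}^{\,p}$, $G_n=P_{\mu_n}^{\,p}$, so that $\varphi_n$ locally conjugates $F_n$ (fixing $\alpha_{\mu_n}$, multiplier $\lambda_n$ with $|\lambda_n|=a_n$) to $G_n$ (fixing $\alpha^P_{\mu_n}$, multiplier $\Lambda_n$ with $|\Lambda_n|=b_n$). First I would record the relevant convergences: since $\mu_n\to\mu_0$, $P_{\mu_n}\to P_{\mu_0}$ by the continuity assumption, and $\varphi_n\to\varphi$ (already arranged by passing to a subsequence), we get $\alpha_{\mu_n}\to\alpha_{\mu_0}$, $\alpha^P_{\mu_n}\to\varphi(\alpha_{\mu_0})$, $\lambda_n\to\lambda_0$, $\Lambda_n\to\Lambda_0$ with $|\lambda_0|,|\Lambda_0|>1$; in particular $\log a_n$ stays bounded away from $0$ and $\log b_n$ stays bounded. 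By continuity of Koenigs linearization under $F_n\to F_0$, the linearizing charts $k_n$ of $F_n$ at $\alpha_{\mu_n}$ and $\tilde k_n$ of $G_n$ at $\alpha^P_{\mu_n}$ (normalized by $k_n'(\alpha_{\mu_n})=\tilde k_n'(\alpha^P_{\mu_n})=1$) are defined on a \emph{fixed} neighbourhood for all large $n$ and are uniformly bi-Lipschitz, and $h_n:=\tilde k_n\circ\varphi_n\circ k_n^{-1}$ is $K$-quasiconformal with $K$ from Lemma~\ref{lem-qc-conj}, fixes $0$, satisfies $h_n(\lambda_n\zeta)=\Lambda_n h_n(\zeta)$, and converges uniformly to the homeomorphism $h_0:=\tilde k_0\circ\varphi\circ k_0^{-1}$ fixing $0$. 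Writing $\zeta=k_n(w)$, the bi-Lipschitz bounds give $\delta_n(w)=\log|h_n(\zeta)|-\log|\zeta|+O(1)$ uniformly, so it suffices to prove $\log|h_n(\zeta)|=(\log b_n/\log a_n)\log|\zeta|+O(1)$.

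For this I would iterate the functional equation. Fix a small $\rho>0$ in the common domain; given $\zeta$ near $0$, let $N=N(n,\zeta)\ge 0$ be minimal with $|\lambda_n|^{N}|\zeta|\ge|\lambda_n|^{-1}\rho$ and put $\zeta'=\lambda_n^{N}\zeta$, so $|\zeta'|\in[\,|\lambda_n|^{-1}\rho,\rho)$ and $N=-\log|\zeta|/\log a_n+O(1)$ (using that $\log a_n$ is bounded). Applying $h_n(\lambda_n\cdot)=\Lambda_n h_n(\cdot)$ exactly $N$ times gives $\log|h_n(\zeta)|=-N\log b_n+\log|h_n(\zeta')|$, and $-N\log b_n=(\log b_n/\log a_n)\log|\zeta|+O(1)$. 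The remaining boundary term $\log|h_n(\zeta')|$ is $O(1)$ uniformly: for $n$ large the annulus $\{|\lambda_n|^{-1}\rho\le|\zeta'|\le\rho\}$ lies inside the fixed compact annulus $\{\tfrac12|\lambda_0|^{-1}\rho\le|\zeta'|\le\rho\}$, on which $h_0$ (a homeomorphism with $h_0(0)=0$) has $\inf|h_0|>0$ and $\sup|h_0|<\infty$, and $h_n\to h_0$ uniformly. Combining the three estimates yields \eqref{eqn-delta}. For the final assertion: if $|\mult_{f_{\mu_0}}(\alpha_{\mu_0})|\ne|\mult_{P_{\mu_0}}(\alpha^P_{\mu_0})|$ then $\log b_0\ne\log a_0$, so $(\log b_n-\log a_n)/\log a_n$ is bounded away from $0$ for all large $n$; since $\log|w-\alpha_{\mu_n}|\to-\infty$ as $w\to\alpha_{\mu_n}$, $\delta_n(w)$ diverges, at a rate uniform over large $n$.

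The main obstacle — indeed essentially the only content beyond bookkeeping — is securing \emph{uniformity in $n$}: that the fundamental annulus for multiplication by $\lambda_n$, the Koenigs charts $k_n,\tilde k_n$, and the boundary value $\log|h_n(\zeta')|$ are all controlled independently of $n$. My plan is to extract all of this from the three convergences $f_{\mu_n}^{\,p}\to f_{\mu_0}^{\,p}$, $P_{\mu_n}\to P_{\mu_0}$ and $\varphi_n\to\varphi$, together with the uniform dilatation bound of Lemma~\ref{lem-qc-conj}. A small point to watch is that $\varphi_n$ must genuinely be defined on a fixed neighbourhood of $\alpha_{\mu_0}$ and not merely on the Julia set, so that $h_n$ and its functional equation make sense on a fixed punctured disk; this is again guaranteed by Lemma~\ref{lem-qc-conj}. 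No further dynamical input is required.
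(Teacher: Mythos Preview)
Your proof is correct and follows essentially the same approach as the paper's. Both arguments iterate the dynamics to push $w$ into a fixed fundamental annulus, use that the conjugacy is uniformly bounded there, and read off the scaling from the multipliers; the only cosmetic difference is that you pass to Koenigs coordinates $k_n,\tilde k_n$ first and then iterate the linear functional equation $h_n(\lambda_n\zeta)=\Lambda_n h_n(\zeta)$, whereas the paper iterates $f_{\mu_n}^{kp}$ directly in the original coordinates and decomposes $\delta_n(w)=\eta_1(w)+\delta_n(f_{\mu_n}^{kp}(w))+\eta_2(w)$, invoking linearizability only to estimate $\eta_1$ and $\eta_2$.
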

 
\begin{rem}
 This Lemma is equivalent that 
 the H\"{o}lder exponent of $\varphi_n$ at $\alpha_{\mu_n}$ is equal to
 $\log b_n/\log a_n$, i.e.,
 \[
  |\varphi_n(w)-\varphi_n(\alpha_{\mu_n})| \asymp
  |w-\alpha_{\mu_n}|^{\frac{\log b_n}{\log a_n}}.
 \]
\end{rem}

\begin{proof}
 Take a small circle $S(r,\alpha_{\mu_n})$ centered at
 $\alpha_{\mu_n}$. If the radius $r>0$ is sufficiently small, 
 then the circle and its image
 $f_{\mu_n}^p(S(r,\alpha_{\mu_n}))$ bounds an annulus $A$,
 where $p$ is the period of $\alpha_{\mu_n}$ for $f_{\mu_n}$.
 There exists some constant $C>1$ such that $C^{-1}<|\delta_n(w)|<C$
 for any $w \in \overline{A}$.
 For $w$ close to $\alpha_{\mu_n}$, there exists some $k>0$ 
 such that $f_{\mu_n}^{kp}(w) \in \overline{A}$.
 Then 
 \begin{align*}
  \delta_n(w) 
  &=
  \log \left| \frac{\varphi_n(w)-\varphi_n(\alpha_{\mu_n})}
  {P_{\mu_n}^{kp}(\varphi_n(w))-\varphi_n(\alpha_{\mu_n})} \right|
  + \log \left|
  \frac{\varphi_n(f_{\mu_n}^{kp}(w))-\varphi_n(\alpha_{\mu_n})}
  {f_{\mu_n}^{kp}(w)-\alpha_{\mu_n}} \right|
  + \log \left| \frac{f_{\mu_n}^{kp}(w)-\alpha_{\mu_n}}
  {w-\alpha_{\mu_n}} \right| \\
  &= \eta_1(w)+\delta_n(f_{\mu_n}^{kp}(w))+\eta_2(w).
 \end{align*}
 Since $f_{\mu_n}^p$ and $P_{\mu_n}^p$ are linearizable near
 $\alpha_{\mu_n}$ and $\alpha_{\mu_n}^P=\varphi_n(\alpha_{\mu_n})$
 respectively, it follows that
 $\eta_1(w)= -k\log a_n+O(1)$ and $\eta_2(w)=k\log b_n+O(1)$.
 Therefore, $\delta_n(w)=k(\log b_n - \log a_n) + O(1)$.
 Furthermore, as $w \to \alpha_{\mu_n}$, $k$ tends to infinity,
 so we have \eqref{eqn-delta}.
 These estimates are uniform on $n$ because of the convergence as $n \to
 \infty$.
\end{proof}

Let $w=g_n(y_{\mu_n})$. If $n$ is sufficiently large, $w$ is close
to $\alpha_{\mu_n}$. Thus we have
\begin{align*}
 \delta_n(w)&= 
 \log \left| \frac{\varphi_n(g_n(y_{\mu_n})) 
                   - \varphi_n(g_n(x_{\mu_n}))}
                  {g_n(y_{\mu_n})-g_n(x_{\mu_n})}\right| \\
 &=
  \log \left| \frac{g_n^P \circ \varphi_n(y_{\mu_n}) 
                   - g_n^P \circ \varphi_n(x_{\mu_n})}
                  {y_{\mu_n} - x_{\mu_n}}\right|
 -
  \log \left| \frac{g_n(y_{\mu_n})-g_n(x_{\mu_n})}
                   {y_{\mu_n} - x_{\mu_n}}\right| \\
 &=
  \log \left| \frac{g_n^P \circ \psi_n(y_{\mu_n}) 
                   - g_n^P \circ \psi_n(x_{\mu_n})}
                  {y_{\mu_n} - x_{\mu_n}}\right|
 -
  \log \left| \frac{g_n(y_{\mu_n})-g_n(x_{\mu_n})}
                   {y_{\mu_n} - x_{\mu_n}}\right|.
\end{align*}
Since $x_{\mu_n}$ and $y_{\mu_n}$ lie in the interior of the filled
Julia set, where $\psi_n$ is holomorphic, we have
\[
 \delta_n(w) =
  \log \left(
  \frac{(g_n^P \circ \psi_n)'(x_{\mu_n})}{g_n'(x_{\mu_n})}
  + O(|y_n-x_n|)
  \right).
\]
Since $g_n \to g$ by assumption, 
we may assume that $g_n^P=\varphi_n \circ g_n \circ \varphi_n^{-1}$ also
converges by passing to a subsequence.
This implies that $|\delta_n(w)|$ is bounded uniformly for sufficiently
large $n$.

Therefore, by Lemma~\ref{lem-dist-diverge}, 
this holds only when
$|\mult_{f_{\mu_0}}(\alpha_{\mu_0})| =
|\mult_{P_{\mu_0}}(\psi_{\mu_0}(\alpha_{\mu_0}))|$.
This proves the theorem.
\qed


\section{Combinatorics of dynamics of polynomials}
\label{sec-comb}

We need to find nice perturbations for a given parabolic
polynomials to apply Theorem~\ref{thm-conti-mult} to a family of
renormalizable polynomials.
To do this end, we need some results in the preceding paper
\cite{Inou-Kiwi-straightening} with Kiwi.
One of the most essential tools to construct such perturbations is a 
combinatorial technique which we call \emph{combinatorial tuning},
which is a combinatorial version of the inverse of straightening.
We recall some definitions and results in
\cite{Inou-Kiwi-straightening} in this section.
We also prove some lemmas for later use.

\subsection{Mapping schemata and skew products}
The notion of mapping schema is introduced by Milnor \cite{Milnor-hyp}
to describe the dynamics of hyperbolic polynomials.
Here, we review the notion of mapping schemata and
consider polynomials and polynomial-like mappings over them, which are
simple generalization of usual polynomials and polynomial-like mappings.

\begin{defn}[Mapping schemata]
 A \emph{mapping schema} is a triple $T=(|T|,\sigma,\delta)$ 
 where $|T|$ is a finite set, and $\sigma:|T| \to |T|$ and $\delta:|T|
 \to \N$ are maps such that for any periodic point $v \in |T|$ for $\sigma$,
 we have
 \[
  \prod_{k=0}^{n-1}\delta(\sigma^k(v)) \ge 2,
 \]
 where $n$ is the period of $v$.
 We call $\delta$ the \emph{degree function} of $T$ and
 \[
  \delta(T) = 1 + \sum_{v \in |T|} (\delta(T)-1)
 \]
 the \emph{total degree} of $T$.

 We call $v \in |T|$ is \emph{critical} if $\delta(v) > 1$.
 We say $T$ is \emph{reduced} if all $v \in |T|$ are critical.
 Here we only consider reduced mapping schemata because we can easily
 extract a reduced schema from a given schema by
 taking the first return map \cite{Milnor-hyp}.

 We say $T$ is \emph{nonempty} if $|T| \ne \emptyset$.
\end{defn}

\begin{defn}
 We say \emph{$T$ has a non-trivial critical relation} if
 either
 \begin{itemize}
  \item there exist critical $v, v' \in |T|$ and $n > 0$ such that 
	$v \ne v'$ and $v=\sigma^n(v')$, or
  \item there exists a critical $v \in |T|$ such that $\delta(v)\ge 3$.
 \end{itemize}
 
 Otherwise, we say $T$ is of \emph{disjoint type}.
\end{defn}

A mapping schema $T$ is of disjoint type if and only if there is exactly
$\delta(T)-1$ periodic orbits,
or, equivalently, every critical $v \in |T|$ is periodic and 
\[
 \prod_{n=0}^{p-1} \delta(\sigma^n(v)) = 2
\]
where $p$ is the period of $v$.

An integer $d (\ge 2)$ represents the trivial schema of total degree
$d$, i.e., $d=(\{pt\},id,d)$ (see Figure~\ref{fig-schemata}).
Another important example of mapping schemata is the schema of
capture type:
Let $d \ge 3$ and let $T_{\capt,d}=(|T_{\capt}|, \sigma_{\capt},
\delta_{\capt,d})$ be defined by
\begin{itemize}
 \item $|T_{\capt}|=\{v_1,v_2\}$,
 \item $\sigma_{\capt}(v_j)=v_1$ for $j=1,2$,
 \item $\delta_{\capt,d}(v_1)=2$ and $\delta_{\capt,d}(v_2)=d-1$ for $j=1,2$.
\end{itemize}
For simplicity, we denote by $T_{\capt}$ the degree $3$ capture schema
$T_{\capt,3}$.
\begin{figure}[h]
 \begin{align*}
  \entrymodifiers={++[o][F-]}
  \xymatrix @-1pc {
  1 \ar@(rd,ru)[]_d} 
  & &
  \entrymodifiers={++[o][F-]}
  \xymatrix @-1pc {
  1 \ar@(ld,lu)[]^2 & 2 \ar[l]^{d-1}
  } 
  \end{align*}
 \caption{The trivial schema (left) 
 and the capture schema $T_{\capt,d}$ (right) of total degree $d$.}
 \label{fig-schemata}
\end{figure}
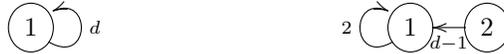

\begin{defn}[Polynomials over mapping schemata and universal polynomial
 model spaces]
 Let $T=(|T|,\sigma,\delta)$ be a mapping schema.
 A \emph{polynomial over $T$} is a map of the form
 \begin{align*}
  f:|T| \times \C &\to |T| \times \C, & f(v,z)&=(\sigma(v),f_v(z))
 \end{align*}
 such that $f_v$
 is a polynomial of degree $\delta(v)$.
 We say $f$ is \emph{monic centered} if $f_v$ is so for all $v \in
 |T|$.
 The \emph{universal polynomial model space} $\poly(T)$ is the set
 of all monic centered polynomials over $T$.

 For a polynomial $f$ over $T$, the \emph{filled Julia set} $K(f)$
 is the set 
 of points whose forward orbit is precompact, and the \emph{Julia
 set} $J(f)$ is the boundary of $K(f)$.
 We say $K(f)$ is \emph{fiberwise connected} if the fiber $K(f,v)
 =\{z \in \C;\ (v,z) \in K(f)\}$ is connected for all $v \in |T|$.
 The (\emph{fiberwise}) \emph{connectedness locus}
 $\mathcal{C}(T)$ is the set of all maps $f \in \poly(T)$ with
 fiberwise connected filled Julia set.
\end{defn}
Observe that a polynomial over the trivial schema $d$ is simply a
polynomial of degree $d$.
Thus the definition of $\poly(d)$ is consistent and we can treat normal
polynomials and polynomials over mapping schemata at the same time.

For $f \in \poly(T)$, $v \in |T|$ and $n>0$, define $f_v^n$ by the equation
\[
 f^n(v,z) = (\sigma^n(v), f_v^n(z)).
\]
Then we have
\[
 K(f) = \{(v,z);\ \{f_v^n(z)\}_{n \ge 0} \mbox{ is bounded}\}.
\]

For a polynomial $f \in \poly(T)$ over a mapping schema
$T=(|T|,\sigma,\delta)$, there exists the B\"{o}ttcher coordinate
$\phi_f$ at $|T| \times \{\infty\}$, i.e., $\phi_f$ is a
holomorphic map defined on a neighborhood of $|T| \times \{\infty\}$
with values in $|T| \times \C$ such that
\begin{itemize}
 \item $\phi_f$ is tangent to the identity at $|T| \times \{\infty\}$;
 \item it has the form $\phi_f(v,z)=(v,\phi_{f,v}(z))$;
 \item it conjugates $f$ to the power map $(v,z) \mapsto (\sigma(v),z^{\delta(v)})$, i.e., 
       \[
	\phi_f(f(v,z))=(\sigma(v), (\phi_{f,v}(z))^{\delta(v)}).
       \]
\end{itemize}
If $f \in \mathcal{C}(T)$, then we can extend $\phi_f$ 
using the dynamics and obtain a univalent map
\[
 \phi_f: (|T| \times \C) \setminus K(f) \to |T| \times (\C \setminus
 \overline{\Delta}),
\]
which we still denote by $\phi_f$.
Hence for $v \in |T|$ and $\theta \in \R/\Z$, we can define the
\emph{external ray} by
\[
 R_f(v,\theta)=\phi_f^{-1}\left(\{(v,r\exp(2\pi i \theta));\ r>1\}\right).
\]
By definition, we have $f(R_f(v,\theta))=R_f(\sigma(v),\delta(v)\theta)$.
Many results on external rays for usual polynomial also hold for
polynomials over mapping schema and proofs are straightforward.
For example, every external ray of a rational angle lands at a repelling
or parabolic eventually periodic point.

\begin{defn}[Polynomial-like mappings over mapping schemata]
 Let $T=(|T|,\sigma,\delta)$ be a mapping schema.
 A \emph{polynomial-like mapping over $T$} is a proper holomorphic
 skew product over $\sigma$ 
 \[
  \begin{matrix}
    g: & U' & \to & U \\
       & (v,z)& \mapsto & (\sigma(v),g_v(z)),
  \end{matrix}
 \]
 such that
 \begin{itemize}
  \item $U' \Subset U$ are subsets of $|T| \times \C$ having the form
	\begin{align*}
	 U' &= \bigcup_{v \in |T|} \{v\} \times U_v',&
	 U  &= \bigcup_{v \in |T|} \{v\} \times U_v,
	\end{align*}
	where $U_v'$ and $U_v$ are topological disks;
  \item $g$ has the form $g(v,z) = (\sigma(v),g_v(z))$ where the degree
	of $g_v:U_v' \to U_{\sigma(v)}$ is equal to $\delta(v)$.
 \end{itemize}
 We may also write it as a collection of proper holomorphic maps 
 \[
  g=(g_v:U_v' \to U_{\sigma(v)})_{v \in |T|}
 \]
 between topological disks in the complex plane.

 The \emph{filled Julia set $K(g)$} is defined as follows:
 \[
  K(g)=\bigcap_{n \ge 0} g^{-n}(U').
 \]
 And the \emph{Julia set} $J(g)$ is the boundary of $K(g)$.
 We say $K(g)$ is \emph{fiberwise connected} if $K(g) \cap \{v\}$ is
 connected for all $v \in |T|$.
 We denote 
 \[
 K(g,v) = \{z \in \C;\ (v,z) \in K(g)\}.
 \]
 By definition, $K(g)$ is fiberwise connected if and only if
 $K(g,v)$ is connected for all $v \in |T|$.
\end{defn}

\begin{defn}[External markings]
 An \emph{external marking} of a polynomial-like mapping $g$ over a
 mapping schema $T=(|T|,\sigma,\delta)$ is a collection of accesses
 $([\gamma_v])_{v \in |T|}$ such that $\gamma_v \subset U_v'$ is a path to $K(g,v)$ and
 $f(\gamma_v) \cap U_{\sigma(v)}' \in [\gamma_{\sigma(v)}]$.
 An \emph{externally marked polynomial-like mapping over $T$} is a
 pair $(g,([\gamma_v]))$ of a polynomial-like mapping over $T$ and 
 an external marking of it.

 Let $f \in \mathcal{C}(T)$. The \emph{standard external marking of
 $f$} is the external marking $([R_f(v,0)])_{v \in |T|}$, defined by the
 external rays of angle zero.
\end{defn}

\begin{defn}[Hybrid equivalence]
 Two polynomial-like mappings $g_1$ and $g_2$ over a mapping schema
 $T=(|T|,\sigma,\delta)$ are \emph{hybrid equivalent}
 if there exists a quasiconformal map $\psi$ defined on a neighborhood
 of $K(g_1)$ such that $\psi \circ g_1 = g_2 \circ \psi$ where both
 sides are defined and $\frac{\partial \psi}{\partial \bar{z}} \equiv 0$
 a.e.\ on $K(g_1)$.

 When $g_1$ and $g_2$ are externally marked, we say that a hybrid conjugacy
 $\psi$ \emph{preserves external markings} if the external marking
 of $g_1$ is mapped to that of $g_2$ by $\psi$.
\end{defn}

We can generalize the straightening theorem
(Theorem~\ref{thm-DH-straightening}) to this case
\cite[Theorem~A]{Inou-Kiwi-straightening}.
\begin{thm}[Straightening theorem for polynomial-like mappings over
 mapping schemata]
 A polynomial-like mapping $g$ over a mapping schema
 $T=(|T|,\sigma,\delta)$ is hybrid equivalent to some $f \in
 \poly(T)$. Furthermore, if $K(g)$ is fiberwise connected and $g$ is
 externally marked, then there exists a unique $f \in \mathcal{C}(d)$ 
 hybrid equivalent to $g$
 such that a hybrid conjugacy between $f$ and $g$ preserves external
 markings, where the external marking of $f$ is the standard external
 marking.
\end{thm}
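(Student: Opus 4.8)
The plan is to run the quasiconformal surgery of Douady and Hubbard \cite{Douady-Hubbard-poly-like} fiber by fiber, keeping every step equivariant over $\sigma$. After a fiberwise conformal change of coordinate one may assume each $U_v$ is a fixed round disk $\Delta(\rho)$ with $\rho>1$, so that $g_v\colon U_v'\to\Delta(\rho)$ is proper of degree $\delta(v)$ with $U_v'\Subset\Delta(\rho)$. For each $v$ I would extend $g_v$ to a quasiregular branched cover $G_v\colon\hat{\C}\to\hat{\C}$ of degree $\delta(v)$ that equals $g_v$ on a neighborhood of $K(g,v)$, equals $z\mapsto z^{\delta(v)}$ on $\hat{\C}\setminus\overline{\Delta(\rho)}$, and interpolates quasiconformally on the collar $A_v=\Delta(\rho)\setminus\overline{U_v'}$; this is the standard interpolation, since on $\partial U_v'$ the map $g_v$ and on $|z|=\rho$ the map $z^{\delta(v)}$ are smooth $\delta(v)$-fold covers of round circles. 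The resulting $G(v,z)=(\sigma(v),G_v(z))$ is a quasiregular skew product over $\sigma$ on $|T|\times\hat{\C}$, holomorphic off $\bigcup_v\{v\}\times A_v$, and equal near $|T|\times\{\infty\}$ to the model $(v,z)\mapsto(\sigma(v),z^{\delta(v)})$.

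Next I would produce a $G$-invariant conformal structure. By a degree count $G_v(A_v)=\{\rho\le|z|\le\rho^{\delta(v)}\}$, so $G$ sends each collar into $\{(\sigma(v),w):|w|\ge\rho\}$, where $G$ acts by $w\mapsto w^{\delta(\cdot)}$ with $\rho>1$ and hence every orbit escapes monotonically to $|T|\times\{\infty\}$ — here the hypothesis $\prod_k\delta(\sigma^k(v))\ge2$ along every $\sigma$-cycle is exactly what makes the model genuinely expand at infinity on each cycle. Therefore every $G$-orbit meets $\bigcup_v\{v\}\times A_v$ at most once, and one can define a $G$-invariant Beltrami form $\mu$ with $\|\mu\|_\infty<1$: take $\mu=0$ on $K(g)$ and on the model region, let $\mu|_{A_v}$ be the dilatation of $G_v$, and spread $\mu$ to iterated preimages of the collars by holomorphic pullback. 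Applying the measurable Riemann mapping theorem separately in each fiber and normalizing the integrating maps to be tangent to the identity at $\infty$, one obtains a fiber-preserving quasiconformal $\Phi(v,z)=(v,\phi_v(z))$ such that $P:=\Phi\circ G\circ\Phi^{-1}$ has the form $(v,z)\mapsto(\sigma(v),P_v(z))$ with each $P_v$ holomorphic; being a rational self-map of $\hat{\C}$ of degree $\delta(v)$ totally ramified over the fixed point $\infty$, $P_v$ is a polynomial of degree $\delta(v)$, and after absorbing a fiberwise affine normalization into $\Phi$ we get $P\in\poly(T)$. Since $\mu\equiv0$ on $K(g)$, $\Phi$ is conformal near $K(g)$ and conjugates $g$ to $P$, so it is a hybrid equivalence; and if $K(g)$ is fiberwise connected then $K(P,v)=\phi_v(K(g,v))$ is connected for all $v$, i.e.\ $P\in\mathcal{C}(T)$.

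For uniqueness I would take $f_1,f_2\in\mathcal{C}(T)$ both hybrid equivalent to $g$ respecting external markings, with induced hybrid conjugacy $\psi$ between $f_1$ and $f_2$ (sending the standard marking of $f_1$ to that of $f_2$). Since $f_i\in\mathcal{C}(T)$, the B\"ottcher coordinates $\phi_{f_i}$ extend over the whole fiberwise escaping set and conjugate $f_i$ there to the model, so $\phi_{f_2}^{-1}\circ\phi_{f_1}$ is a fiber-preserving, fiberwise conformal conjugacy between $f_1$ and $f_2$ on the fiberwise basins of infinity; it glues with $\psi$ into a global fiber-preserving quasiconformal $\Psi(v,z)=(v,\psi_v(z))$ conjugating $f_1$ to $f_2$, the two definitions agreeing because equal external markings force the landing pattern of rational external rays to agree in each fiber, so the argument of \cite[p.~302, Lemma~1]{Douady-Hubbard-poly-like} applies fiberwise. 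Each $\psi_v$ is then conformal off $K(f_1,v)$ and satisfies $\bar\partial\psi_v=0$ a.e.\ on $K(f_1,v)\supset J(f_1,v)$, hence $\bar\partial\psi_v\equiv0$ a.e.\ on $\hat{\C}$; thus $\psi_v$ is a M\"obius map fixing $\infty$, i.e.\ affine, and since near $\infty$ it agrees with the rigidly normalized B\"ottcher coordinates it satisfies $\psi_v(z)=z+o(1)$, whence $\psi_v=\mathrm{id}$ and $f_1=f_2$. The step I expect to be the main obstacle is the equivariant construction of the extensions $G_v$ together with the verification that $G$-orbits cross the collars at most once so that $\|\mu\|_\infty<1$; but the schema structure actually works in our favor, since every object lives in a single fiber, $\sigma$ merely permutes fibers, and the cycle condition $\prod\delta\ge2$ is precisely what controls the escaping dynamics of the model near $|T|\times\{\infty\}$.
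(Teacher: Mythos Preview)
The paper does not actually prove this theorem: it is stated and immediately attributed to \cite[Theorem~A]{Inou-Kiwi-straightening}. Your proposal is the natural fiberwise adaptation of the Douady--Hubbard surgery and is essentially correct; this is almost certainly the argument that appears in the cited companion paper, so there is no meaningful difference in approach to report.

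One small point worth tightening: in the uniqueness argument you glue the hybrid conjugacy $\psi$ on $K(f_1)$ with the B\"ottcher conjugacy $\phi_{f_2}^{-1}\circ\phi_{f_1}$ on the complement, and assert they agree on $J(f_1)$ by the marking hypothesis and \cite[p.~302, Lemma~1]{Douady-Hubbard-poly-like}. This is right, but note that in the schema setting the external marking is a \emph{collection} $([\gamma_v])_{v\in|T|}$, one access per fiber, and the fiber maps $g_v$ need not be self-maps. You should check that preserving the marking in fiber $v$ propagates under $\sigma$ so that the induced identifications of rational rays are consistent across the entire schema; this is straightforward (the markings are $\sigma$-invariant by definition), but it is the only place where the schema structure enters the uniqueness proof nontrivially.
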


\subsection{Rational laminations}

Here we recall the notion of \emph{rational lamination},
which is a fundamental tool to discuss combinatorics of polynomials 
with connected Julia sets.
We also consider rational laminations over mapping schemata,
which is necessary to define \emph{combinatorial tuning},
which is the inverse operation of straightening in a combinatorial sense.

For an integer $d>1$, let $m_d$ denote the $d$-fold covering
$\theta \mapsto d\theta$ defined on $\R/\Z$ to itself.
For a mapping schema $T=(|T|,\sigma,\delta)$, 
define a map $m_T:|T| \times \R/\Z \to |T| \times \R/\Z$ 
by 
\[
 m_T(v,\theta) = (\sigma(v),m_{\delta(v)}(\theta))=(\sigma(v),\delta(v)\theta).
\]

We say two sets $A,B \subset \R/\Z$ (or $\{v\} \times \R/\Z$)
are \emph{unlinked} if $B$ is contained in a component of
$\R/\Z \setminus A$.
Note that it is equivalent that $A$ is contained in a component of
$\R/\Z \setminus B$, or that the Euclidean (or hyperbolic) convex hulls
of $\exp(2\pi i A)$ and $\exp(2\pi i B)$ in $\C$ are disjoint.

Let $A \subset \R/\Z$. We say a map $f:A \to f(A) \subset \R/\Z$ is
\emph{consecutive preserving} if 
for any component $(\theta,\theta')$ of $\R/\Z \setminus A$, 
$(f(\theta), f(\theta'))$ is a component of $\R/\Z \setminus
f(A)$. 

\begin{defn}[Invariant rational laminations]
 Let $T=(|T|,\sigma,\delta)$ be a mapping schema.
 An equivalence relation $\lambda$ on $|T| \times \Q/\Z$
 is called a \emph{$T$-invariant rational lamination}
 or a \emph{rational lamination over $T$} if the
 following conditions hold:
 \begin{enumerate}
  \item Each equivalence class is contained in $\{v\} \times \Q/\Z$ for
	some $v \in |T|$.
  \item $\lambda$ is closed in $(|T| \times \Q/\Z)^2$.
  \item Every equivalence class is finite.
  \item Equivalence classes are pairwise unlinked.
  \item For a $\lambda$-equivalence class $A$,
	$m_T(A)$ is also a $\lambda$-equivalence class.
  \item $m_T:A \to m_T(A)$ is consecutive preserving.
 \end{enumerate}
 Let us denote by $\supp(\lambda) \subset |T| \times \Q/\Z$ 
 the union of all non-trivial $\lambda$-classes.

 We may denote $\lambda$ as a collection $(\lambda_v)_{v \in |T|}$
 of (non-invariant) rational laminations on $\Q/\Z$, i.e.,
 $(v,\theta)$ and $(v,\theta')$ are $\lambda$-equivalent if and only if
 $\theta$ and $\theta'$ are $\lambda_v$-equivalent.
\end{defn}

\begin{exam}
 For $f \in \mathcal{C}(T)$, the \emph{rational lamination
 $\lambda_f$} of $f$ is the landing relation of external rays of rational
 angles. 
 Namely, $(v,\theta)$ and $(v,\theta')$ are $\lambda_f$-equivalent if and only if
 the external rays $R_f(v,\theta)$ and $R_f(v,\theta')$ land at the same point.
 By the theorem of Kiwi \cite{Kiwi-ratlamin},
 an equivalence relation $\lambda$ on $|T| \times \Q/\Z$ is a
 $T$-invariant rational lamination if and only if $\lambda$ is the
 rational lamination of some $f \in \mathcal{C}(T)$.
\end{exam}

\begin{defn}[Combinatorial renormalization]
 We say a $T$-invariant rational lamination $\lambda$ is 
 \emph{admissible for $f \in \mathcal{C}(T)$} if $\lambda \subset
 \lambda_f$,
 that is, $R_f(v,\theta)$ and $R_f(v,\theta')$ land at the same point 
 when $(v,\theta)$ and $(v,\theta')$ are $\lambda$-equivalent.
 We also say that $f$ is \emph{$\lambda$-combinatorially
 renormalizable} or \emph{$f$ admits $\lambda$}.
 Let
 \[
 \mathcal{C}(\lambda)=\{f \in \mathcal{C}(T);\ \lambda \subset \lambda_f\}
 \]
 be the set of all polynomials which admit $\lambda$.
\end{defn}

A rational lamination $\lambda$ naturally induces the \emph{unlinked
relation} for irrational angles \cite{Kiwi-ratlamin}, which is closely related to 
\emph{gaps} introduced by Thurston \cite{Thurston}.
\begin{defn}[Unlinked classes]
 Let $T$ be a mapping schema and 
 let $\lambda$ be a $T$-invariant rational lamination.
 We say $(v,\theta),~(v',\theta') \in |T| \times (\R \setminus \Q)/\Z$
 are \emph{$\lambda$-unlinked} 
 if $v = v'$ and for any $\lambda$-equivalence class $\{v\} \times A$, 
 $\theta$ and $\theta'$ lie in the same component of $\R/\Z \setminus A$.
\end{defn}
Observe that $\lambda$-unlinked relation is an equivalence relation and
each equivalence class (\emph{$\lambda$-unlinked class}) is
contained in $\{v\} \times (\R \setminus \Q)/\Z$ for some $v \in |T|$.
A set $\{v\} \times L$ is an unlinked class if and only if $L$ is a
$\lambda_v$-unlinked class.

\begin{lem}
 \label{lem-linked}
 Let $\lambda_0$ and $\lambda$ be rational laminations
 and assume $\lambda \supset \lambda_0$.
 If a $\lambda$-equivalence class $A$ is not a $\lambda_0$-class,
 then there exists some $\lambda_0$-unlinked class $L$ such that 
 $A$ and $L$ are linked (not unlinked) and $\partial L \cap A$ is
 nonempty.
\end{lem}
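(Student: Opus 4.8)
The plan is to exhibit $L$ as the $\lambda_0$-unlinked class that ``fills'' the $\lambda_0$-gap containing a suitable edge of the convex hull of $A$. Throughout I write $A\subset\{v\}\times\Q/\Z$ for the appropriate $v$, work on $\R/\Z$, and identify leaves of $\lambda_0$ and $\lambda$, as well as edges of hyperbolic convex hulls of classes, with chords of $\overline{\D}$.

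\emph{A defect edge.} Since $\lambda_0\subset\lambda$, the $\lambda_0$-class of every point of $A$ is contained in the $\lambda$-class $A$, so the $\lambda_0$-classes partition $A$. If there were only one block it would equal $A$, contrary to hypothesis; hence $A$ has at least two points, the partition has at least two blocks, and so there are cyclically consecutive $a,a'\in A$ with $a\not\sim_{\lambda_0}a'$. Let $\ell=\overline{aa'}$; it is an edge of $\hull(A)$ and a leaf of $\lambda$. I claim the open chord $\ell$ crosses no leaf of $\lambda_0$: a $\lambda_0$-class $B$ either meets $A$, in which case $B\subset A$ and the chords of $\hull(B)$ are edges or diagonals of $\hull(A)$ and hence do not cross the edge $\ell$, or is disjoint from $A$, in which case $B$ lies in a $\lambda$-class other than $A$, hence is unlinked with $A$, so $\hull(B)\cap\hull(A)=\emptyset$. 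As $\ell$ is moreover not itself a $\lambda_0$-leaf, $\ell$ lies in the interior of a unique complementary gap $G$ of $\lambda_0$, with $a,a'\in\overline G$.

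\emph{The gap is infinite.} If $G$ were a finite-sided gap, all of its sides would be $\lambda_0$-leaves, and going around $\partial G$ all of its vertices would be $\lambda_0$-equivalent; but $a,a'$ are among these vertices, forcing $a\sim_{\lambda_0}a'$, a contradiction. So $\overline G\cap\partial\D$ contains a nondegenerate arc, and the irrational angles lying in $\overline G$ are pairwise $\lambda_0$-unlinked and form a single, nonempty, $\lambda_0$-unlinked class $L$. To finish: because $\ell$ runs through the interior of $G$, the part of $G$ near the vertex $a$ is a sector containing $\ell$, hence reaching both sides of $\ell$; I would argue that on at least one side the boundary of $G$ near $a$ is an arc of $\partial\D$ abutting $a$ rather than a $\lambda_0$-leaf issuing from $a$ — if both sides were leaves $\overline{as},\overline{at}$ then $a',s,t$ would all lie in $A$ with $a'$ the immediate $A$-neighbor of $a$, yet $a'$ would have to lie on the arc from $s$ to $t$ avoiding $a$, which is impossible. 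Consequently $a\in\overline L$, and likewise $a'\in\overline L$, so $\partial L\cap A\supseteq\{a,a'\}\neq\emptyset$. Running the same analysis at $a$ and at $a'$ shows that $\overline G\cap\partial\D$, hence $L$, contains irrational angles in the arc $(a,a')$ and irrational angles in its complement; these lie in two distinct components of $\R/\Z\setminus A$, so $L$ is not contained in a single component, i.e.\ $L$ and $A$ are linked.

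The delicate part, which I expect to be the main obstacle, is the last step: controlling how $\overline G\cap\partial\D$ approaches $a$ and $a'$ and that it reaches both sides of $\ell$. This must rule out an infinite ``cusp'' of small boundary leaves of $G$ accumulating on $a$ (or $a'$) and a chain of boundary leaves of $G$ running from the leaf at $a$ to the leaf at $a'$ on the far side of $\ell$; both are excluded using the two defining finiteness properties of rational laminations — that classes are finite and that the relation is closed (the latter also ensuring $\lambda_0$-leaves do not accumulate onto $\ell$) — since such a chain would produce an infinite $\lambda_0$-class. The bookkeeping is cleanest if one first imports the gap structure of rational laminations developed in the preceding paper \cite{Inou-Kiwi-straightening}.
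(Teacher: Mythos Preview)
Your approach is correct in outline but takes a genuinely different route from the paper, and the ``delicate part'' you flag is handled differently there.

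\textbf{What the paper does.} Rather than starting from an \emph{edge} $\overline{aa'}$ of $\hull(A)$ with $a\not\sim_{\lambda_0}a'$, the paper starts from a $\lambda_0$-class $B\subsetneq A$ and a component $(s,t)$ of $\R/\Z\setminus B$ that meets $A$ (so $s,t\in B\subset A$ and some $B'\subset A$ lies in $(s,t)$). It then sets
\[
F=[s,t]\setminus\bigcup_{\theta\sim_{\lambda_0}\theta',\,[\theta,\theta']\subset(s,t)}[\theta,\theta'],
\]
observes that the removed intervals are pairwise nested or disjoint, and concludes that $F$ is a Cantor-type set with $L=F\cap(\R\setminus\Q)/\Z$ a $\lambda_0$-unlinked class. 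Since $s,t\in\overline L$ and $A$ has a point strictly inside $(s,t)$, linking is immediate: $L$ meets both the sub-arc of $(s,t)$ adjacent to $s$ and the one adjacent to $t$, which are distinct components of $\R/\Z\setminus A$.

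\textbf{How this differs from yours.} Your construction identifies $L$ as the irrationals in $\overline G$ for the gap $G\ni\overline{aa'}$; this forces you to prove $L$ reaches the \emph{far side} $(a',a)$ of $\ell$, which is where all the boundary-tracing lives. The paper sidesteps this completely: by anchoring at a $\lambda_0$-leaf $\overline{st}$ instead of a non-$\lambda_0$ edge $\overline{aa'}$, the set $L$ is confined to $[s,t]$ from the start, and linking follows from the single observation $s,t\in\overline L$ together with $A\cap(s,t)\ne\emptyset$.

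\textbf{One imprecision in your sketch.} You write that closedness of $\lambda_0$ ``rules out an infinite cusp of small boundary leaves of $G$ accumulating on $a$''. It does not: boundary leaves $\overline{b_nc_n}$ with $b_n,c_n\to a$ give only $a\sim_{\lambda_0}a$ in the limit. What closedness \emph{does} rule out is leaves accumulating on $\ell$ itself (since that would force $a\sim_{\lambda_0}a'$). The cusp at $a$ can occur, but it is harmless: the boundary leaves of $G$ inside $(a,a')$ are pairwise disjoint (being maximal), so between consecutive ones there are genuine arcs of $\partial\D$ in $\overline G$, and these arcs accumulate on $a$. That is the argument you want for $a\in\overline L$. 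Your reasoning for the far side --- that a finite chain of leaves from $a$ to $a'$ would force $a\sim_{\lambda_0}a'$ and an infinite one would force an infinite $\lambda_0$-class --- is correct; just note that consecutive boundary leaves cannot lie in different $\lambda_0$-classes without an intervening arc, since distinct classes are disjoint and hence cannot share a vertex.
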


\begin{proof}
 Let $B \subset A$ be a $\lambda_0$-class.
 Take a component $(s,t)$ of $\R/\Z \setminus B$ which intersects $A$.
 Consider a set
 \[
  F=[s,t] \setminus \bigcup_{\theta,\theta'} [\theta,\theta'],
 \]
 where the union is taken for all $\lambda_0$-equivalent pairs $\theta,
 \theta'$ such that $[\theta,\theta'] \subset (s,t)$.
 Since each pair of such intervals are either disjoint or one contains the
 other, $F$ is a Cantor set removing countably many points. In
 particular, $F$ is uncountable and contained in the derived set of $F$
 itself. Hence $L = F \cap (\R \setminus \Q)/\Z$ is nonempty.
 Furthermore, since $L$ is unlinked with any $\lambda_0$-equivalence
 class, $L$ is in fact a $\lambda_0$-unlinked class.

 Let $B' \subset A \cap (s,t)$ be another $\lambda_0$-class.
 Then since $B$ and $B'$ lie in different components of $L$,
 $A$ and $L$ are linked.

 By construction, $s,t \in A$ lie in the closure of $L$.
\end{proof}

The external rays for $f$ of $\lambda$-equivalent angles cut the phase
space into sectors. This allows us to associate each $\lambda$-unlinked
class with a continuum (compare \cite{Schleicher-fiber}). 
\begin{defn}[Sectors and fibers]
 Let $\lambda$ be a rational lamination over a mapping schema
 $T$ and let $f \in \mathcal{C}(\lambda)$.
 For a $\lambda$-unlinked class $L \subset \{v\} \times \R/\Z$
 and $\lambda$-equivalent angles $(v,\theta), (v,\theta')$, let
 \[
  \sector_f(v,\theta,\theta';L)
 \]
 be the connected component of 
 \[
  (\{v\} \times \C) \setminus \overline{(R_f(v,\theta) \cup
  R_f(v,\theta'))} 
 \]
 containing the external ray $R_f(v,t)$ for every $(v,t) \in L$.
 The \emph{fiber of $L$ for $f$} is defined by:
 \[
  K_f(L) = K(f) \cap \bigcap_{\theta \sim_{\lambda_v} \theta',\ 
 \theta \ne \theta'} \overline{\sector(v,\theta,\theta';L)}.
 \]
\end{defn}

The following proposition (see
\cite[Proposition~3.7]{Inou-Kiwi-straightening}) describes
some basic properties for $\lambda$-unlinked classes and
corresponding fibers.
\begin{prop}
 \label{prop-unlinked-class}
 Let $T$ be a mapping schema and 
 let $\lambda$ be a $T$-invariant rational lamination.
 For $f \in \mathcal{C}(\lambda)$ and a $\lambda$-unlinked
 class $L$, we have the following:
 \begin{enumerate}
  \item $m_T(L)$ is also a $\lambda$-unlinked class.
  \item $f(K_f(L)) = K_f(m_d(L))$.
  \item If $L$ is finite, then
	\begin{enumerate}
	 \item 
	       $m_T:L \to m_T(L)$ is a $\delta(L)$-to-one consecutive
	       preserving map for some $\delta(L)>0$.
	 \item $f:K_f(L) \to K_f(m_T(L))$ has degree $\delta(L)$, i.e., 
	       every point in $K_f(m_T(L))$ has $\delta(L)$ preimages in
	       $K_f(L)$ counted with multiplicity.
	\end{enumerate}
  \item If $L$ is infinite, then
	\begin{enumerate}
	 \item $L$ is eventually periodic by $m_T$.
	 \item There exists a homeomorphism $\alpha_L:\overline{L}/\lambda \to
	       \R/\Z$ such that $\alpha_{m_T(L)} \circ m_T \circ
	       \alpha_L^{-1}$ 
	       is well-defined and coincides with $m_{\delta(L)}$ for some
	       $\delta(L) \ge 1$.
	\end{enumerate}
 \end{enumerate}
\end{prop}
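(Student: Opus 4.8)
The plan is to move the combinatorics of $\lambda$ along two maps simultaneously: the polynomial $f$ on $|T|\times\C$ and the fiberwise power map $m_T$ on $|T|\times\R/\Z$. Since $f\in\mathcal{C}(\lambda)$, for each $\lambda$-class $A\subset\{v\}\times\Q/\Z$ the rays $R_f(v,\theta)$ ($\theta\in A$) land at a common point $z_A$, and $f$ carries $R_f(v,\theta)$ to $R_f(\sigma(v),\delta(v)\theta)$ and $z_A$ to $z_{m_T(A)}$, so $f$ transports the ``ray picture'' of fiber $v$ onto that of fiber $\sigma(v)$. Two properties of $m_{\delta(v)}$ will be used repeatedly: it restricts to a covering of $\R/\Z$ minus any finite set onto $\R/\Z$ minus its image, so complementary arcs map homeomorphically; and the preimage $m_{\delta(v)}^{-1}(B)$ of any $\lambda$-class $B$ in fiber $\sigma(v)$ is a \emph{union of $\lambda$-classes} — if $\theta_0\in m_{\delta(v)}^{-1}(B)$ and $A$ is its class, then by invariance $m_T(A)$ is a class meeting $B$, hence $m_T(A)=B$ by disjointness of classes, so $A\subseteq m_{\delta(v)}^{-1}(B)$. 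Finally, by Kiwi's theorem \cite{Kiwi-ratlamin} one may assume $\lambda=\lambda_g$ for a suitable $g\in\mathcal{C}(T)$, which is convenient for (iv).

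For (i): let $\theta,\theta'\in L$ and let $B$ be a $\lambda$-class in fiber $\sigma(v)$; since $\theta,\theta'$ are unlinked from every class in $m_{\delta(v)}^{-1}(B)$, they lie in a common component $C$ of $\R/\Z\setminus m_{\delta(v)}^{-1}(B)$, and $m_{\delta(v)}$ maps $C$ homeomorphically onto a component of $\R/\Z\setminus B$, so $\delta(v)\theta$ and $\delta(v)\theta'$ are unlinked from $B$; as $B$ is arbitrary, $m_T(L)$ lies in a single $\lambda$-unlinked class $L'$. That $m_T(L)=L'$, and (when $L$ is finite) that $m_T\colon L\to m_T(L)$ is $\delta(L)$-to-one and consecutive preserving, comes from the fact that $z\mapsto z^{\delta(v)}$ carries the region determined by the boundary leaves of $L$ properly, with a well-defined degree $\delta(L)\ge 1$, onto the region determined by those of $L'$, properness giving surjectivity and constancy of the fibers. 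For (ii): since $f$ permutes rays and landing points as above, it maps $\overline{\sector_f(v,\theta,\theta';L)}$ onto $\overline{\sector_f(\sigma(v),\delta(v)\theta,\delta(v)\theta';m_T(L))}$; intersecting over all $\lambda$-equivalent pairs and using that $f$ maps $K(f,v)$ onto $K(f,\sigma(v))$ yields $f(K_f(L))=K_f(m_T(L))$, which is the intended reading of item (ii), since $K_f(L)\subset\{v\}\times\C$ and its image lies in fiber $\sigma(v)$.

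For (iii)(b): $K_f(m_T(L))$ is a subcontinuum of $K(f,\sigma(v))$, and for finite $L$ the fiber $K_f(L)$ is exactly one component of its preimage under the degree-$\delta(v)$ polynomial $f_v$, bounded by the $f_v$-preimages of the bounding rays; a degree argument (Riemann--Hurwitz, or counting external accesses, both scaled by $\delta(L)$) gives $\deg(f|_{K_f(L)})=\delta(L)$. For (iv), take $\lambda=\lambda_g$. An infinite $\lambda$-unlinked class $L$ falls into one of two cases: either $\overline{L}/\lambda$ is a single point, so all rays of $\overline{L}$ land at one point, which is a landing point of rational rays and hence preperiodic; or $K_g(L)$ is a nondegenerate continuum, which then has nonempty interior and so contains a bounded Fatou component of $g$, preperiodic by the absence of wandering Fatou components. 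In either case $L$ is eventually periodic under $m_T$, which is (iv)(a); and since $m_T$ maps infinite unlinked classes to infinite ones — by (i) and the surjectivity above, a finite image forcing $L$ finite — the periodic cycle consists of infinite classes. Finally (iv)(b) is Thurston's gap structure \cite{Thurston}: one equips $\overline{L}/\lambda$ with the circular order inherited from $\R/\Z$ after collapsing the boundary leaves, checks it is a topological circle, and then surjectivity together with the consecutive-preserving property of $m_T\colon L\to m_T(L)$ forces the induced self-map to be conjugate to $m_{\delta(L)}$ for the appropriate $\delta(L)\ge 1$, which produces $\alpha_L$; details are in \cite{Inou-Kiwi-straightening}.

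I expect (iv) to be the hard part. The eventual periodicity in (iv)(a) is a ``no wandering gaps'' statement, which I would deduce from the non-existence of wandering Fatou components after realizing $\lambda$ by a polynomial, but this needs a clean dictionary between infinite unlinked classes and preperiodic objects — closures of preperiodic Fatou components, or preperiodic branch points of the Julia set — whose verification is the delicate step; similarly, in (iv)(b) one must rule out $\overline{L}/\lambda$ being more degenerate than a circle carrying an honest covering. By comparison, (i)--(iii) are bookkeeping with $m_{\delta(v)}$ and with how $f$ moves external rays; the one point there that needs care is that the combinatorial degree $\delta(L)$, read from $m_{\delta(v)}$ on $L$, and the dynamical degree, read from $f$ on $K_f(L)$, agree, which one extracts from the single proper map $z\mapsto z^{\delta(v)}$ on the region determined by $L$.
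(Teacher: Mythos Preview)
The paper does not prove this proposition at all: it is stated with an explicit reference to the preceding paper \cite{Inou-Kiwi-straightening}, and the remark immediately following says that the proof given there for $d$-invariant rational laminations carries over verbatim to laminations over mapping schemata. So there is no ``paper's own proof'' to compare against beyond that citation; your sketch is in effect an attempt to reconstruct what that reference does, and indeed you end (iv)(b) by pointing to the same source.

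For (i)--(iii) your outline is along the standard lines and is essentially what one finds in \cite{Inou-Kiwi-straightening}: push $\lambda$-classes and complementary arcs through $m_{\delta(v)}$, transport sectors by $f$, and read off the degree from the proper map $z\mapsto z^{\delta(v)}$ restricted to the region cut out by the boundary leaves of $L$. You also correctly note that item (ii) as printed should read $f(K_f(L))=K_f(m_T(L))$.

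Your treatment of (iv)(a) has a genuine gap, which you partly flag yourself. The dichotomy you set up does not stand: if $L$ is infinite then $\overline L$ is infinite, while every $\lambda$-class is finite, so $\overline L/\lambda$ can never be a single point and that branch is vacuous. In the other branch, ``$K_g(L)$ is a nondegenerate continuum, hence has nonempty interior'' is not justified: a priori $K_g(L)$ could be a nondegenerate subcontinuum of $J(g)$ with empty interior (think of a realization $g$ with a Cremer point or a non--locally-connected Julia set). The actual argument in \cite{Inou-Kiwi-straightening} does not go through Sullivan's no-wandering-domains via a Fatou component; it is purely combinatorial, using that an infinite unlinked class is a gap of positive angular measure (the boundary leaves of $L$ have total length strictly less than the length of the arcs they span), that $m_T$ multiplies this measure by $\delta(v)$ up to the contribution of critical elements, and that there are only finitely many critical elements, which forces eventual periodicity. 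Your instinct that this is ``the hard part'' is right, but the route through Fatou components is not the one that works cleanly for an arbitrary $T$-invariant rational lamination.
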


\begin{rem}
 We stated this proposition only for the case of $d$-invariant rational
 laminations in \cite{Inou-Kiwi-straightening}, but the same proof can
 be applied also to rational laminations over mapping schemata.
 
 Similarly, some theorems below in this section are also stated for
 rational laminations and polynomials over mapping schemata, 
 but the proofs are exactly the same (although notations will become more
 complicated), and some of them are immediate consequence 
 from the same result for the usual case.
\end{rem}

\begin{defn}[Critical elements]
 Let $\lambda$ be a $T$-invariant rational lamination.
 For a $\lambda$-class $A$, let $\delta(A)$ denotes the degree of $m_T:A
 \to m_T(A)$. It is well-defined by the consecutive preservingness.
 We say $A$ is \emph{critical} if $\delta(A)>1$.
 Similarly, for $\lambda$-unlinked class $L$, 
 we say $L$ is \emph{critical} if $\delta(L)>1$, where
 $\delta(L)$ is the one defined in Proposition~\ref{prop-unlinked-class}.

 Let $\crit^P(\lambda)$, $\crit^W(\lambda)$ and $\crit^F(\lambda)$ 
 be the set of all critical $\lambda$-classes, finite $\lambda$-unlinked
 classes and infinite $\lambda$-unlinked classes respectively,
 and let $\crit(\lambda)= \crit^P(\lambda) \cup \crit^W(\lambda) \cup
 \crit^F(\lambda)$. 
 We call an element in $\crit^P(\lambda) \cup \crit^W(\lambda)$ (resp.\
 $\crit^F(\lambda)$)
 a \emph{Julia critical element},
 (resp.\ a \emph{Fatou critical element}).
 For a Julia critical element $A$, we say $A$ is \emph{preperiodic}
 if $A \in \crit^P(\lambda)$ and $A$ is \emph{wandering} if $A \in
 \crit^W(\lambda)$. 
\end{defn}
Roughly speaking, critical elements correspond to critical points for
$f \in \mathcal{C}(\lambda)$.
It follows that 
\[
 \delta(v) - 1 = \sum_{A \in \crit(\lambda),~A \subset \{v\} \times \R/\Z}
 (\delta(A)-1).
\]
For $*=P, W, F$, let 
\begin{align*}
 PC^*(\lambda)&=\{m_T^n(w);~w \in \crit^*(\lambda),~n>0\}, &
 PC(\lambda)&= PC^P(\lambda) \cup PC^W(\lambda) \cup PC^F(\lambda), \\
 CO^*(\lambda)&=PC^*(\lambda) \cap \crit(\lambda), &
 CO(\lambda)&= CO^P(\lambda) \cup CO^W(\lambda) \cup CO^F(\lambda).
\end{align*}

\begin{defn}[Post-critically finite, hyperbolic and Misiurewicz laminations]
 We say a $T$-invariant rational lamination $\lambda$ is
 \begin{itemize}
  \item \emph{post-critically finite} if there is no wandering
	critical Julia element (i.e., $\crit^W(\lambda)=\emptyset$),
  \item \emph{hyperbolic} if there is no Julia critical elements
	(i.e., $\crit(\lambda)= \crit^F(\lambda)$), and
  \item \emph{Misiurewicz} if there is no critical
	$\lambda$-unlinked class 
	(i.e., $\crit(\lambda) = \crit^P(\lambda)$).
 \end{itemize}
\end{defn}
Observe that $\lambda$ is post-critically finite if and only if
$PC(\lambda)$ is finite.

\begin{defn}
 A post-critically finite rational lamination $\lambda$ over a mapping
 schema $T$ is \emph{primitive}
 if for any infinite $\lambda$-unlinked classes $w,w' \subset \{v\} \times
 \R/\Z$, there is no $\lambda$-class $A$ such that both $A \cap
 \overline{w}$ and $A \cap \overline{w'}$ are nonempty.
\end{defn}

Even if $\lambda$ is not primitive, such intersections exist essentially
only finitely many:
\begin{lem}
 \label{lem-non-primitive}
 Let $\lambda$ be a post-critically finite rational lamination over a
 mapping schema $T$.
 
 Let $A$ be a $\lambda$-class such that there exist infinite
 $\lambda$-unlinked classes $L_1 \ne L_2$ whose closures intersect $A$.
 Then there exists some $n \ge 0$ such that either 
 \begin{itemize}
  \item $m_T^n(A) \in \crit^P(\lambda)$, or
  \item $m_T^n(L_1) \ne m_T^n(L_2)$ and both lie in $CO^F(\lambda)$.
 \end{itemize}
 In particular, the set of all eventual periods of such
 $\lambda$-classes $A$ is finite.
\end{lem}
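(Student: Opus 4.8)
The plan is to exploit the finiteness of the post-critical set $PC(\lambda)$ together with the structure of the landing relation at post-critical points. First I would observe that if $L_1 \neq L_2$ are infinite $\lambda$-unlinked classes whose closures both meet a $\lambda$-class $A$, then $A$ ``separates'' $L_1$ and $L_2$: two of the external rays landing at the common landing point of $A$ bound sectors containing $L_1$ and $L_2$ respectively. Since the fibers $K_f(L_i)$ are nondegenerate continua (the $L_i$ being infinite), these sectors are distinct, and the landing point of $A$ is on the common boundary of $K_f(L_1)$ and $K_f(L_2)$. The key point is that by Proposition~\ref{prop-unlinked-class}, each infinite $\lambda$-unlinked class is eventually periodic under $m_T$, and the dynamics on $\overline{L}/\lambda$ is conjugate to an angle-multiplication map $m_{\delta(L)}$; moreover $m_T$ sends a $\lambda$-class meeting $\overline{L_i}$ to a $\lambda$-class meeting $\overline{m_T(L_i)}$, and it is consecutive preserving on $A$.

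The main step is a counting/pigeonhole argument on the orbit of $A$. Consider the forward orbit $A, m_T(A), m_T^2(A), \dots$ Since $A$ is a $\lambda$-class of a post-critically finite lamination, either this orbit eventually hits a critical $\lambda$-class --- in which case $m_T^n(A) \in \crit^P(\lambda)$ for some $n$ (here one uses that there are no wandering Julia critical elements, so a critical $\lambda$-class in the orbit is automatically preperiodic) --- or it stays in the ``non-critical'' regime forever. In the latter case, $m_T$ acts injectively along the orbit of $A$ near the relevant sector boundaries, so the images $m_T^n(L_1)$ and $m_T^n(L_2)$ remain distinct for all $n$. Now I would invoke that $L_1, L_2$ are eventually periodic: after finitely many steps both land in periodic infinite unlinked classes, i.e.\ in $CO^F(\lambda)$. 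Thus for large $n$ we get $m_T^n(L_1), m_T^n(L_2) \in CO^F(\lambda)$ with $m_T^n(L_1) \neq m_T^n(L_2)$, which is the second alternative.

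For the final ``in particular'' clause: once we know that any such $A$ eventually maps either into $\crit^P(\lambda)$ or into a configuration of two distinct periodic infinite unlinked classes sharing a boundary $\lambda$-class, I would argue that there are only finitely many such target configurations. Indeed $\crit^P(\lambda)$ is finite (post-critical finiteness), $CO^F(\lambda)$ is a finite union of finite cycles of unlinked classes, and each periodic infinite unlinked class has only finitely many boundary $\lambda$-classes (its boundary in $\overline{L}/\lambda \cong \R/\Z$ is closed and the periodic orbit structure of $m_{\delta(L)}$ forces the relevant classes to be preperiodic, hence finitely many). So the set of eventual periods of such $A$ is finite. The hard part will be making the separation argument rigorous --- precisely, showing that distinctness of $L_1$ and $L_2$ is preserved under $m_T$ as long as no critical $\lambda$-class is encountered, and correctly handling the case where $A$ itself is critical but the two sectors still get separated; this requires careful use of the consecutive-preserving property and Lemma~\ref{lem-linked} to control how sectors and unlinked classes behave under pullback and pushforward.
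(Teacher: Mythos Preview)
Your approach is essentially the paper's: the same dichotomy on whether some forward image $m_T^n(A)$ is a critical $\lambda$-class, the observation that in the non-critical case $m_T^n(L_1)\ne m_T^n(L_2)$ persists for all $n$, and then the appeal to eventual periodicity of infinite unlinked classes. The paper's proof is in fact even terser than yours---it simply asserts the distinctness-preservation step and then says ``the lemma follows because all infinite $\lambda$-unlinked classes are eventually periodic''---so your identification of the consecutive-preserving property as the mechanism behind the ``hard part'' is exactly what the paper leaves implicit.

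One genuine imprecision in your argument for the ``in particular'' clause: it is \emph{not} true that a periodic infinite unlinked class has only finitely many boundary $\lambda$-classes, and ``preperiodic under $m_{\delta(L)}$'' does not yield finiteness (there are infinitely many preperiodic rationals). What is actually needed is the weaker statement that only finitely many $\lambda$-classes meet the closures of \emph{two distinct} elements of $CO^F(\lambda)$. This holds because two distinct infinite unlinked classes can share at most one $\lambda$-class on their common boundary (an unlinkedness argument), and $CO^F(\lambda)$ is finite, so there are finitely many pairs. Once both $m_T^n(L_i)$ are periodic, the orbit of $m_T^n(A)$ is trapped in this finite set of ``shared'' classes, which bounds the eventual period. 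The paper does not spell this step out either, so your level of detail already matches the original.
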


One can prove this lemma in a purely combinatorial way; but it is easier
to use a polynomial realization of a given rational lamination by Kiwi
\cite{Kiwi-ratlamin}:
\begin{thm}
 For a given post-critically finite $d$-invariant rational lamination
 $\lambda$, there exists a post-critically finite polynomial $f$ of
 degree $d$ such that $\lambda_f=\lambda$.
\end{thm}
The existence is essentially proved by \cite{Poirier-pcf2},
and implicitly stated in \cite[Section~6-7]{Kiwi-ratlamin} 
(see also \cite[Theorem~5.17]{Inou-Kiwi-straightening}).
The uniqueness is proved in \cite[Theorem~5.18]{Inou-Kiwi-straightening}.

\begin{proof}[Proof of Lemma~\ref{lem-non-primitive}]
 If $m_T^n(A)$ is not critical for any $n \ge 0$,
 then $m_T^n(L_1) \ne m_T^n(L_2)$ for any $n \ge 0$.
 
 By taking the polynomial realization, 
 it is easy to see that all infinite $\lambda$-unlinked classes
 are eventually periodic, and periodic $\lambda$-unlinked classes lie in
 $CO^F(\lambda)$, which is finite.
 Therefore, the lemma follows.
\end{proof}

\begin{defn}[Mapping schemata of rational laminations]
 Let $\lambda$ be a $T_0$-invariant rational lamination such that
 $\crit^F(\lambda)$ is nonempty.
 Define a (reduced) mapping schema
 $T(\lambda)=(|T(\lambda)|,\sigma_{\lambda},\delta_{\lambda})$ by 
 \begin{align*}
  |T(\lambda)| &= \crit^F(\lambda), &
 \sigma_{\lambda}(w) &= m_{T_0}^{\ell_w}(w),
 \end{align*}
 and $\delta_{\lambda}=\delta$ is the one defined in
 Proposition~\ref{prop-unlinked-class},
 where $\ell_w>0$ is the smallest number such that $m_T^{\ell_w}(w) \in
 \crit^F(\lambda)$. 

 We say $\lambda$ has a \emph{non-trivial Fatou critical relation}
 if $T(\lambda)$ has a non-trivial critical relation.
 Otherwise, we say $\lambda$ is of \emph{disjoint type} (it is
 equivalent that $T(\lambda)$ is of disjoint type).
\end{defn}

Proposition~\ref{prop-unlinked-class} guarantees the existence of an
\emph{internal angle system}, which is needed to make straightening maps
well-defined:
\begin{defn}[Internal angle systems]
 Let $T_0=(|T_0|,\sigma_0,\delta_0)$ be a mapping schema and
 let $\lambda$ be a $T_0$-invariant rational lamination with
 $\crit^F(\lambda) \ne \emptyset$.

 An \emph{internal angle system} of $\lambda$ is a collection of maps
 $\alpha=(\alpha_w:\overline{w} \to \R/\Z)_{w \in |T(\lambda)|}$ such
 that $\alpha_w$ induces a homeomorphism between $\overline{w}/\lambda$
 and $\R/\Z$ and 
 \begin{equation}
  \label{eqn-alpha}
   \alpha_{\sigma(w)}\circ m_{T_0}^{\ell_w}(v,\theta) = 
   m_{\delta(w)}(\alpha_w(v,\theta)),
 \end{equation}
 for $(v,\theta) \in \overline{w}$.
\end{defn}

We sometimes omit $v$ and simply write $\alpha_w(\theta)$,
for $\alpha_w$ is defined on $\overline{w} \subset \{v\}\times
\R/\Z$, so $v$ depends only on $w$.
The map $\alpha$ above can also be considered as follows:
\begin{align*}
 \alpha: \bigsqcup_{w \in |T(\lambda)|} \overline{w}
 &\longrightarrow |T(\lambda)| \times \R/\Z, \\
 \overline{w} \ni (v,\theta) 
 &\longmapsto \alpha(v,\theta) := (w,\alpha_w(v,\theta)).
\end{align*}
In this expression, we have the following equality for $(v,\theta) \in
\overline{w}$:
\[
 \alpha \circ m_{T_0}^{\ell_w}(v,\theta)
 = m_{T(\lambda)} \circ \alpha (v,\theta).
\]
Here readers should notice that we need to take the disjoint union
because $\overline{w}$ and $\overline{w'}$ might intersect for $w \ne w'
\in |T(\lambda)|$.

\begin{lem}
 \label{lem-same-period}
 Let $\lambda$ be a rational lamination over a mapping schema
 $T_0=(|T_0|,\sigma_0,\delta_0)$.
 Assume $\crit^F(\lambda) \ne \emptyset$ and 
 let $\alpha=(\alpha_w)_{w \in |T(\lambda)|}$ be an internal angle system.
 Let $(v,\theta) \in \overline{w}$ be periodic of period $p$ by $m_{T_0}$.
 Then $\alpha(v,\theta)=(w,\alpha_w(v,\theta))$ is also periodic of
 period $p'$ by $m_{T(\lambda)}$, where $p'$ is defined by
 \[
  \sum_{n=0}^{p'-1} \ell_{\sigma_\lambda^n(w)}=p.
 \]
 
 In particular, $p$ is not less than the period of $w$ by $\sigma$.
\end{lem}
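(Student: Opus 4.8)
The plan is to iterate the defining functional equation for the internal angle system. By induction on $j$,
\[
 m_{T(\lambda)}^{j}(\alpha(v,\theta)) = \bigl(\sigma_\lambda^{j}(w),\ \alpha_{\sigma_\lambda^{j}(w)}(m_{T_0}^{s_j}(v,\theta))\bigr),
 \qquad s_j := \sum_{n=0}^{j-1}\ell_{\sigma_\lambda^{n}(w)},
\]
where one uses that $m_{T_0}^{s_j}(v,\theta)\in\overline{\sigma_\lambda^{j}(w)}$ since $m_{T_0}(\overline{w'})\subset\overline{m_{T_0}(w')}$ for every unlinked class $w'$. Recall that each $\alpha_{w'}$ descends to a homeomorphism $\overline{w'}/\lambda\xrightarrow{\ \sim\ }\R/\Z$ (Proposition~\ref{prop-unlinked-class}(iv)); hence the $m_{T(\lambda)}$-period $p'$ of $\alpha(v,\theta)$ is the least $j\ge1$ for which $\sigma_\lambda^{j}(w)=w$ and $m_{T_0}^{s_j}(v,\theta)$ is $\lambda$-equivalent to $(v,\theta)$. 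So it suffices to prove (a) $w$ is $\sigma_\lambda$-periodic, and (b) this least $j=p'$ satisfies $s_{p'}=p$. The last assertion of the lemma is then immediate: $\sigma_\lambda^{p'}(w)=w$ forces $p'$ to be a multiple of the $\sigma_\lambda$-period $p''$ of $w$, and each $\ell_{(\cdot)}\ge1$ gives $p=s_{p'}\ge p'\ge p''$.

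For (a): the $\lambda$-class $A$ of $(v,\theta)$ is $m_{T_0}$-periodic, since both $A$ and $m_{T_0}^{p}(A)$ contain $(v,\theta)$ and distinct $\lambda$-classes are disjoint; it is therefore non-critical, for if $A$ had period $q$ then $m_{T_0}^{q}\colon A\to A$ would be a consecutive-preserving surjection of degree $\prod_{i=0}^{q-1}\delta(m_{T_0}^{i}(A))$, forcing that product to equal $1$ (the same holds for every iterate of $A$, all still periodic and non-critical). Because $A$ is non-critical, $m_{T_0}$ maps the complementary arcs of $A$ in $\R/\Z$ bijectively onto those of $m_{T_0}(A)$; combining this with Proposition~\ref{prop-unlinked-class}(i), $m_{T_0}$ carries the finite set $S$ of infinite $\lambda$-unlinked classes whose closure contains $(v,\theta)$ injectively into the analogous set at $m_{T_0}(v,\theta)$, and likewise along the whole orbit of $(v,\theta)$. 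Hence $m_{T_0}^{p}$ restricts to a permutation of $S$ (an injective self-map of a finite set); as $w\in S$, the class $w$ is $m_{T_0}$-periodic, and since $\sigma_\lambda$, being the first-return map of $m_{T_0}$ to $\crit^F(\lambda)$, restricts to a cyclic permutation of the intersection of the $m_{T_0}$-cycle of $w$ with $\crit^F(\lambda)$ — a set containing $w$ — the class $w$ is $\sigma_\lambda$-periodic.

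For (b): write $\rho=s_{p''}$ for the $m_{T_0}$-period of $w$, so $s_{kp''}=k\rho$. Conjugating by the homeomorphisms of Proposition~\ref{prop-unlinked-class}(iv), $m_{T_0}^{\rho}$ acts on $\overline{w}/\lambda$ as the degree-$D$ self-covering $m_{D}$ of $\R/\Z$ with $D=\prod_{i=0}^{\rho-1}\delta(m_{T_0}^{i}(w))$, while on $A$ the map $m_{T_0}$ is a consecutive-preserving bijection onto $m_{T_0}(A)$, so that the first return of $m_{T_0}$ to $A$ is a cyclic rotation of $A$. The $m_{T_0}$-orbit of $(v,\theta)$, of length $p$, is assembled out of exactly these two linear models, and a direct bookkeeping of periods then identifies the least admissible $j$ above as the one with $s_j$ equal to the least common multiple of the period of $w$ and the period of the class $A$ — which is $p$. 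This period count, checking that $p$ distributes without loss over the period $\rho$ of $w$, the rotation number of $m_{T_0}$ on $A$, and the multiplicities $\delta$ along the cycle of $w$, is the one genuinely delicate step; the injectivity invoked in (a) is the other place that needs care.
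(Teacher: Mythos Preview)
Your argument has a genuine gap in part (b), which you yourself flag as ``the one genuinely delicate step'' without actually carrying it out. The claim that the least admissible $j$ has $s_j=\operatorname{lcm}(\text{period of }w,\text{period of }A)$ is not justified, and it is not clear that this lcm equals~$p$. The real issue is this: you characterize the $m_{T(\lambda)}$-period as the least $j$ with $\sigma_\lambda^{j}(w)=w$ and $m_{T_0}^{s_j}(v,\theta)$ \emph{$\lambda$-equivalent} to $(v,\theta)$, but you never rule out that $m_{T_0}^{s_j}(v,\theta)$ could be a different point of $A\cap\overline{w}$ (still $\lambda$-equivalent to $(v,\theta)$, yet with $s_j<p$). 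Your lcm heuristic only tracks when the image lands back in the class $A$, not when it returns to $(v,\theta)$ itself. Part (a) has a related soft spot: showing that $m_{T_0}^{p}$ permutes the finite set $S$ only gives that $w$ is eventually periodic under $m_{T_0}$; it does not by itself force the $m_{T_0}$-period of $w$ to divide $p$ (if $|S|=2$ and $m_{T_0}^{p}$ swapped the two elements, the period would be $2p$). What is missing in both places is the same idea: orientation.

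The paper's proof sidesteps all of this bookkeeping with a single device. Choose $(v,\theta_n)\in w$ with $\theta_n\nearrow\theta$ (possible since $(v,\theta)\in\overline{w}$). Because $m_{T_0}$ has positive degree on each fibre, $m_{T_0}^{p}$ is locally orientation-preserving, so $m_{T_0}^{p}(v,\theta_n)\nearrow(v,\theta)$ as well; these points lie in the unlinked class $m_{T_0}^{p}(w)$, and since at most one unlinked class can accumulate on $(v,\theta)$ from the left, $m_{T_0}^{p}(w)=w$. This gives both that $w$ is periodic of period dividing $p$ (so $p'$ with $s_{p'}=p$ exists) and, by continuity of $\alpha_w$, that $m_{T(\lambda)}^{p'}$ fixes $\alpha(v,\theta)$. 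For exactness of the period, the same one-sided trick applies: if $p_1'$ is the true period and $p_1=s_{p_1'}$, then $m_{T_0}^{p_1}(v,\theta_n)\in w$ accumulates on $m_{T_0}^{p_1}(v,\theta)$ from the left, forcing $m_{T_0}^{p_1}(v,\theta)=(v,\theta)$ (not merely $\lambda$-equivalent), hence $p\mid p_1$. This orientation argument is exactly what your proof is missing; once you insert it, your lcm computation becomes unnecessary.
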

\marginpar{Need only ``In particular'' part (inequality is enough).}

\begin{proof}
 There exists a sequence $\theta_n$ such that $(v,\theta_n) \in w$ and
 either
 $\theta_n \nearrow \theta$ or $\theta_n \searrow \theta$.
 We may assume $\theta_n \nearrow \theta$ (the other case is similar).
 Then $m_{T_0}^p(v,\theta_n)\nearrow m_{T_0}^p(v,\theta)=(v,\theta)$ 
 and $m_{T_0}^p(v,\theta_n)$ lie in a $\lambda$-unlinked class
 $w'=\sigma_0^p(w)$.
 Hence it follows that $w'=w$ (since otherwise $w$ and $w'$ are linked)
 and we have
 \begin{align*}
  m_{T(\lambda)}^{p'}(w,\alpha_w(v,\theta)) 
  &= \lim_{n \to \infty} m_{T(\lambda)}^{p'}(w,\alpha_w(v,\theta_n)) \\
  &= \lim_{n \to \infty} (\sigma_\lambda^{p'}(w), 
  m_{\delta(\sigma_{\lambda}^{p'-1}(w))}
  \circ \cdots \circ m_{\delta(w)} \circ \alpha_w(v,\theta_n)) \\
  &= \lim_{n \to \infty} (\sigma_0^p(w), \alpha_{\sigma_0^p(w)} \circ
  m_{T_0}^p(v,\theta_n)) \\
  &= (w,\alpha_w(v,\theta))
 \end{align*}
 by the equation~\eqref{eqn-alpha}.

  On the other hand, let $p_1'$ be the period of $(w,\alpha_w(v,\theta))$
 by $m_{T(\lambda)}$. 
 Then $m_{T_0}^{p_1}(v,\theta) \sim_\lambda (v,\theta)$,
 where $p_1=\sum_{n=0}^{p_1'-1} \ell_{\sigma_{\lambda}^n(w)}$.
 Moreover, since
 \[
 \lim_{n \to \infty} m_{T(\lambda)}^{p'_1}(w,\alpha_w(v,\theta_n))
 = m_{T(\lambda)}^{p'_1}(w,\alpha_w(v,\theta))
 = (w,\alpha_w(v,\theta)),
 \]
 it follows that $m_{T_0}^{p_1}(v,\theta_n) \in
 \sigma_\lambda^{p_1'}(w)=w$.
 Therefore, $w$ approaches $m_{T_0}^{p_1'}(v,\theta)$ from the left.
 In the $\lambda$-equivalence class of $(v,\theta)$, this property holds
 only for $(v,\theta)$, thus $m_{T_0}^{p_1}(v,\theta)=(v,\theta)$.
\end{proof}

\subsection{Renormalizations}

\begin{defn}[Renormalizations]
 Let $T_0$ be a mapping schema and $\lambda_0$ be a $T_0$-invariant
 rational lamination with $\crit^F(\lambda_0) \ne \emptyset$.
 We say $f \in \mathcal{C}(\lambda_0)$ is 
\emph{$\lambda_0$-renormalizable} 
 if there exist topological disks $U_w' \Subset U_w$ for each $w \in
 |T(\lambda_0)|$ such that
 \begin{itemize}
  \item $g=(f^{\ell_w}:U_w' \to U_{\sigma(w)})$ is a polynomial-like map
	over $T(\lambda_0)$ with fiberwise connected Julia set.
  \item $K(g,w)=K_f(w)$ for all $w \in |T|$.
 \end{itemize}
 We call $g$ a \emph{$\lambda_0$-renormalization of $f$}.
\end{defn}

\begin{defn}[Straightening map $\chi_{\lambda_0}$]
 Let $T_0$ be a mapping schema and $\lambda_0$ be a $T_0$-invariant
 rational lamination. Let $(\alpha_w:\overline{w} \to \R/\Z)_{w \in
 |T(\lambda_0)|}$ be an internal angle system.
 For each $w \in |T(\lambda_0)|$, take $\theta_w \in \overline{w}$ such
 that $\alpha_w(\theta_w)=0$.
 
 For $f \in \mathcal{R}(\lambda_0)$, define $\chi_{\lambda_0}(f) \in
 \poly(T(\lambda_0))$ as follows;
 let $g$ be a $\lambda_0$-renormalization of $f$.
 Then external rays $([R_f(w,\theta_w)])_{w \in |T(\lambda_0)|}$ defines
 an external marking of $g$.
 Let $\chi_{\lambda_0}(f)$ be the polynomial over $T(\lambda_0)$ hybrid
 equivalent to $g$ preserving external markings.

 This gives a well-defined map
 $\chi_{\lambda_0}:\mathcal{R}(\lambda_0) \to \mathcal{C}(T(\lambda_0))$.
\end{defn}

We recall some results in \cite{Inou-Kiwi-straightening}:
\begin{thm}[Injectivity of straightening maps]
 \label{thm-inj}
 Let $T_0$ be a mapping schema.
 For a post-critically finite $T_0$-invariant rational lamination
 $\lambda_0$, the straightening map $\chi_{\lambda_0}$ is injective.
\end{thm}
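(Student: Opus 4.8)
The plan is to show that an $f\in\mathcal{R}(\lambda_0)$ is determined by the pair $(\lambda_0,\chi_{\lambda_0}(f))$: given two renormalizable polynomials with the same straightening, I would upgrade the hybrid conjugacy between their renormalizations to a global quasiconformal conjugacy whose complex dilatation vanishes a.e., and then conclude by rigidity together with the standard external markings. So suppose $f_1,f_2\in\mathcal{R}(\lambda_0)$ with $\chi_{\lambda_0}(f_1)=\chi_{\lambda_0}(f_2)=P$. Choosing $\lambda_0$-renormalizations $g_i=(f_i^{\ell_w}:U_{i,w}'\to U_{i,\sigma(w)})$, by definition each $g_i$ is hybrid equivalent to $P$ respecting the external marking coming from the fixed internal angle system; composing these conjugacies, and using uniqueness of a hybrid conjugacy on the fibers from the straightening theorem over mapping schemata, yields a quasiconformal conjugacy $h$ between $g_1$ and $g_2$, defined near $K(g_1)=\bigsqcup_{w}K_{f_1}(w)$, respecting external markings, with $\bar\partial h\equiv 0$ a.e.\ on $K(g_1)$.

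Next I would establish rigidity of the ``exterior'', i.e.\ that $f_1$ and $f_2$ carry the same combinatorics outside their renormalization fibers. In particular $\lambda_{f_1}=\lambda_{f_2}$: both contain $\lambda_0$, and by Lemma~\ref{lem-linked} every extra $\lambda_{f_i}$-class attaches to some $\lambda_0$-unlinked class and is therefore dictated by the internal combinatorics of the corresponding fiber; inside the Fatou critical fibers this internal combinatorics is the image, under the internal angle system (Proposition~\ref{prop-unlinked-class}, Lemma~\ref{lem-same-period}), of the rational lamination of $P$, while inside preperiodic Julia fibers it is combinatorially rigid (rational rays landing at preperiodic, hence repelling, points) — in all cases the same for $f_1$ and $f_2$. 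From this identification of combinatorics I would read off a homeomorphism between the two dynamical planes which equals $\phi_{f_2}^{-1}\circ\phi_{f_1}$ on the basins of infinity (a conjugacy there), is carried by $h$ on the renormalization fibers and their grand orbits, and respects external rays and the sector decompositions attached to $\lambda_0$.

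I would then interpolate these pieces quasiconformally across the remaining fundamental regions to produce a global quasiconformal conjugacy $\Phi$ with $\Phi\circ f_1=f_2\circ\Phi$. The key point making this possible is that, since $\lambda_0$ is post-critically finite and $f_i\in\mathcal{R}(\lambda_0)$, every critical point of $f_i$ lies either in a preperiodic Julia fiber or in a Fatou critical fiber $K_{f_i}(w)$; hence outside the grand orbit of $\bigsqcup_w K_{f_i}(w)$ the map $f_i$ has no free critical dynamics, so the interpolation can be done on one fundamental region and spread by the dynamics with bounded dilatation. The complex dilatation of $\Phi$ then vanishes a.e.\ on the escaping set and on $\bigsqcup_w K_{f_1}(w)$ (where $h$ is hybrid), hence a.e.\ on their full grand orbits; the complement of these sets in $K(f_1)$ supports no invariant line field (again by the absence of recurrent free critical behavior there and the preperiodicity of the Julia critical elements), so it contributes nothing and $\bar\partial\Phi\equiv 0$. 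Thus $\Phi$ is conformal, hence affine; since $\Phi$ agrees with $\phi_{f_2}^{-1}\circ\phi_{f_1}$ on the basin of infinity it fixes the angle-$0$ external ray, and as $f_1,f_2$ are monic and centered this forces $\Phi=\mathrm{id}$, so $f_1=f_2$.

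The main obstacle is the exterior-rigidity and interpolation step: one must make precise that the post-critical finiteness of $\lambda_0$ confines all of the ``moduli'' of $f_i$ to the renormalization fibers, so that the only locus where $\Phi$ might fail to be conformal is the hybrid locus (where it is conformal by construction) together with a set carrying no invariant line field. Equivalently, one has to control the dynamics of $f_i$ on fibers of non-critical $\lambda_0$-unlinked classes and on preperiodic Julia fibers well enough to run a no-invariant-line-field argument (or a direct combinatorial-rigidity argument) there; the rest is bookkeeping with external rays, the sector decompositions, and the straightening theorem over mapping schemata.
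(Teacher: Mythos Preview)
This theorem is not proved in the present paper; it is quoted from the companion article \cite{Inou-Kiwi-straightening} as one of its main results, so there is no in-paper argument to compare against line by line. Your outline is, however, the standard architecture for such injectivity statements (and is in spirit the one carried out in \cite{Inou-Kiwi-straightening}): transport the fiberwise hybrid conjugacy to a global quasiconformal conjugacy that is conformal off the small filled Julia sets, and invoke rigidity.

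Two places in your sketch need tightening. First, your route to $\lambda_{f_1}=\lambda_{f_2}$ via Lemma~\ref{lem-linked} is too loose: that lemma only produces \emph{some} $\lambda_0$-unlinked class linked with a given extra class, not necessarily a Fatou critical one, and you still need to know that combinatorial tuning is injective (equivalently, that the combinatorial straightening of Theorem~\ref{thm-comb-tuning} together with $\lambda_0$ determines $\lambda_f$). Second, the step ``the complement supports no invariant line field'' is doing real work that you have not justified; the cleaner way, and the way actually used, is to arrange the conjugacy to be \emph{conformal} (not merely quasiconformal) off the grand orbit of $\bigsqcup_w K_{f_i}(w)$, by matching B\"ottcher coordinates and the $\lambda_0$-ray/equipotential pattern on explicit fundamental domains, so that no line-field argument is needed. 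Post-critical finiteness of $\lambda_0$ is exactly what makes this possible: every $\lambda_0$-unlinked class is eventually periodic and lands in $CO^F(\lambda_0)$, so up to a set of measure zero $K(f_i)$ is exhausted by the grand orbit of the fibers.
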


We can also give some equivalent conditions about the domain
$\mathcal{R}(\lambda_0)$ of the straightening map.
\begin{prop}
 \label{prop-nonempty}
 Let $\lambda_0$ be a post-critically finite $d$-invariant rational
 lamination having nonempty mapping schema $T(\lambda_0)$. 
 Then the following are equivalent:
 \begin{enumerate}
  \item $\mathcal{R}(\lambda_0)$ is nonempty.
  \item If $A$ is a critical $\lambda_0$-class and $L \in
	CO^F(\lambda_0)$, then $A \cap \overline{L}=\emptyset$.
 \end{enumerate}
\end{prop}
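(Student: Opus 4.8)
The plan is to prove the two implications separately: $(i)\Rightarrow(ii)$ as a Riemann--Hurwitz counting obstruction, and $(ii)\Rightarrow(i)$ by an explicit construction of a $\lambda_0$-renormalizable polynomial.

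For $(i)\Rightarrow(ii)$ I would argue contrapositively: assuming there is a critical $\lambda_0$-class $A$ and an $L\in CO^F(\lambda_0)$ with $A\cap\overline{L}\neq\emptyset$, I would show that \emph{no} $f\in\mathcal{C}(\lambda_0)$ is $\lambda_0$-renormalizable, hence $\mathcal{R}(\lambda_0)=\emptyset$. Fix such an $f$ and a $\theta_0\in A\cap\overline{L}$. The external rays $R_f(\theta)$, $\theta\in A$, land at a common point $z_0$; since $A$ is critical (and a Julia class, so $z_0$ is strictly preperiodic by the landing theorem for rational rays), $f$ is not locally injective at $z_0$, so $z_0$ is a critical point of $f$ in $J(f)$, and $\theta_0\in\overline{L}$ forces $z_0\in\partial K_f(L)$, so $z_0\in K_f(L)$ but $z_0$ lies in no Fatou component. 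Now suppose $f$ had a $\lambda_0$-renormalization $g=(g_w:U_w'\to U_{\sigma w})$ over $T(\lambda_0)$ with $g_w=f^{\ell_w}$ and $K(g,w)=K_f(w)$, and put $w=L$. Then $z_0\in K_f(L)=K(g,w)\subset U_w'$ and, as $(f^{\ell_w})'(z_0)=0$, $z_0$ is a critical point of $g_w$. But $g_w:U_w'\to U_{\sigma w}$ is proper of degree $\delta_{\lambda}(w)=\delta(L)$, so it carries exactly $\delta(L)-1$ critical points, with multiplicity, in $U_w'$; and by Proposition~\ref{prop-unlinked-class} the branched cover $f^{\ell_w}:K_f(L)\to K_f(\sigma_{\lambda}L)$ already has degree $\delta(L)$ (the intermediate unlinked classes being non-critical), so Riemann--Hurwitz places $\delta(L)-1$ critical points of $g_w$ in the Fatou part of $K_f(L)$. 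As $z_0$ lies in the Julia part it is an \emph{additional} critical point of $g_w$, forcing $\deg g_w\geq\delta(L)+1$ --- a contradiction.

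For $(ii)\Rightarrow(i)$ I would exhibit a member of $\mathcal{R}(\lambda_0)$ directly. First, using Kiwi's realization theorem --- together with Thurston's theorem, when $\lambda_0$ is post-critically finite, to obtain a geometrically finite model, and otherwise the realization and fiber results of \cite{Inou-Kiwi-straightening} --- fix $f_0\in\mathcal{C}(d)$ with $\lambda_{f_0}=\lambda_0$ for which each Fatou fiber $K_{f_0}(w)$, $w\in|T(\lambda_0)|$, is the closure of a Jordan-domain superattracting component; hypothesis $(ii)$ is precisely what then guarantees that no Julia critical point lies on $\partial K_{f_0}(L)$ for a periodic Fatou fiber $L$. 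Next, for each vertex $w$ thicken $K_{f_0}(w)$ to a puzzle-type topological disk $U_w\supset K_{f_0}(w)$ --- bounded by short subarcs of the external rays landing on $\partial K_{f_0}(w)$ together with an equipotential --- chosen thin enough that $\overline{U_w}$ meets no Julia critical point and no other fiber, which is possible by $(ii)$ and the separation and compactness properties of $\lambda_0$-fibers from the preceding paper (cf.\ Theorem~\ref{thm-cpt}). Let $U_w'$ be the component of $f_0^{-\ell_w}(U_{\sigma w})$ containing $K_{f_0}(w)$; one checks $U_w'\Subset U_w$ and that $g_w=f_0^{\ell_w}:U_w'\to U_{\sigma w}$ is proper of degree exactly $\delta_{\lambda}(w)$, the point being that by $(ii)$ the only critical points of $f_0^{\ell_w}$ in $U_w'$ are the $\delta(w)-1$ carried by $K_{f_0}(w)$ itself. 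Then $g=(g_w:U_w'\to U_{\sigma w})$ is a polynomial-like map over $T(\lambda_0)$ with fiberwise connected Julia set and $K(g,w)=K_{f_0}(w)$, i.e.\ a $\lambda_0$-renormalization of $f_0$, so $f_0\in\mathcal{R}(\lambda_0)\neq\emptyset$.

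I expect the main obstacle to be direction $(ii)\Rightarrow(i)$, and within it two points: choosing $f_0$ so that the combinatorially defined fibers $K_{f_0}(w)$ are genuinely well-behaved continua with the expected Fatou and Julia parts (immediate for post-critically finite $\lambda_0$, but needing the fiber theory of \cite{Inou-Kiwi-straightening} in general), and checking that the thickened pieces $U_w'$ are topological disks on which $f_0^{\ell_w}$ has degree exactly $\delta_{\lambda}(w)$ --- which is exactly where $(ii)$ enters, through the same Riemann--Hurwitz bookkeeping that drives $(i)\Rightarrow(ii)$.
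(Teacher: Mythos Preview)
The paper does not prove this proposition: it is stated in Section~\ref{sec-comb} as one of the results recalled from the preceding paper \cite{Inou-Kiwi-straightening} (the introduction says explicitly that the equivalent condition ``is stated in \cite{Inou-Kiwi-straightening}''), and no proof is given here. So there is nothing in this paper to compare your argument against directly.

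That said, a few comments on your outline. Your direction $(ii)\Rightarrow(i)$ is the standard thickening construction and matches the mechanism behind Lemma~\ref{lem-thickening} in this paper; the role you assign to hypothesis $(ii)$ --- guaranteeing that no Julia critical point sits on the boundary of a Fatou fiber so that thickened puzzle pieces carry no unwanted branching --- is exactly right.

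Your direction $(i)\Rightarrow(ii)$ has the correct idea but a gap. You write ``put $w=L$'', implicitly assuming $L\in|T(\lambda_0)|=\crit^F(\lambda_0)$, whereas $L\in CO^F(\lambda_0)$ may be a non-critical class $m_T^{\,j}(w)$ with $0<j<\ell_w$ lying strictly between two vertices (or even strictly preperiodic). The fix is to choose a vertex $w$ with $L=m_T^{\,j}(w)$ for some $0\le j<\ell_w$ and pull the critical point $z_0\in K_f(L)$ back to $K_f(w)$; then your Riemann--Hurwitz count applies to $g_w=f^{\ell_w}$. The essential point of your argument --- that any open disk $U'_w\supset K_f(w)$ must contain an open neighborhood of (a preimage of) $z_0$, forcing $g_w$ to pick up an additional branch point beyond the $\delta_\lambda(w)-1$ already supplied by the Fatou critical element, so that no proper restriction of degree exactly $\delta_\lambda(w)$ can exist --- is sound, but you should say explicitly why $z_0$ is not already counted in the combinatorial degree $\delta(L)$: the circle map on $\overline{L}/\lambda_0$ sees the class $A\cap\overline{L}$ as a single point, whereas an open neighborhood of $z_0$ in the plane sees the full local degree $\delta(A)$ of $f$.
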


\begin{thm}[Compactness of the renormalizable set for primitive combinatorics]
 \label{thm-cpt}
 Assume a post-critically finite $T_0$-invariant rational lamination
 $\lambda_0$ has nonempty mapping schema $T(\lambda_0)$.
 If $\lambda_0$ is primitive, then 
 \begin{enumerate}
  \item $\mathcal{C}(\lambda_0)=\mathcal{R}(\lambda_0)$, and
  \item $\mathcal{R}(\lambda_0)$ is compact and nonempty.
 \end{enumerate}
\end{thm}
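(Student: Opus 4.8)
The plan is to prove the cyclic chain of implications (i) $\Rightarrow$ (ii) $\Rightarrow$ (iii) $\Rightarrow$ (i), together with the trivial observation that under the hypothesis $T(\lambda_0) \ne \emptyset$ the set $\mathcal{R}(\lambda_0)$ being nonempty is already guaranteed by Proposition~\ref{prop-nonempty} once we know the relevant disjointness of critical classes and infinite unlinked classes. Throughout, the essential dichotomy I would use is Lemma~\ref{lem-non-primitive}: for a post-critically finite $\lambda_0$, failure of primitivity means there is a $\lambda_0$-class $A$ whose closure meets two distinct infinite unlinked classes $L_1 \ne L_2$, and after finitely many iterations such $A$ either becomes a critical class or the two unlinked classes separate and stay in $CO^F(\lambda_0)$. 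This is exactly the configuration that Proposition~\ref{prop-nonempty}(ii) forbids when $\mathcal{R}(\lambda_0) \ne \emptyset$, so that will be the bridge between the combinatorial condition (i) and the dynamical conditions.

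For (i) $\Rightarrow$ (ii): assuming $\lambda_0$ primitive, I first note nonemptiness of $\mathcal{R}(\lambda_0)$ — this follows from Proposition~\ref{prop-nonempty}, since primitivity precludes a critical $\lambda_0$-class $A$ from meeting the closure of any $L \in CO^F(\lambda_0)$ (such an intersection would be the kind of linking configuration ruled out by primitivity, via Lemma~\ref{lem-linked} applied to a $\lambda$ strictly larger than $\lambda_0$, or directly from the definition). Then the real content is $\mathcal{C}(\lambda_0) = \mathcal{R}(\lambda_0)$, i.e.\ every combinatorially renormalizable $f$ is actually renormalizable. Here I would use the machinery of fibers: for $f \in \mathcal{C}(\lambda_0)$ and $w \in \crit^F(\lambda_0)$, one must build the topological disks $U_w' \Subset U_w$ with $f^{\ell_w}:U_w' \to U_{\sigma(w)}$ polynomial-like and $K(f^{\ell_w},w) = K_f(w)$. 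Primitivity is what makes the fibers $K_f(w)$ for distinct $w$ (and their forward orbits) pairwise disjoint as compact sets — no shared landing points of external rays — so one can thicken each fiber to a disk without collisions and run the standard puzzle/thickening construction (as in \cite{Inou-Kiwi-straightening}). I expect this to be the main obstacle: carefully producing the polynomial-like restriction from the combinatorial data, uniformly controlling the disks, and checking the degree on each piece is $\delta(w)$; but since the analogous statement is the heart of the preceding paper, I would invoke those constructions rather than redo them.

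For (ii) $\Rightarrow$ (iii): given $\mathcal{C}(\lambda_0) = \mathcal{R}(\lambda_0)$ nonempty, I must show this set is compact. Since $\mathcal{C}(\lambda_0) = \{f \in \mathcal{C}(d) : \lambda_0 \subset \lambda_f\}$ and $\mathcal{C}(d)$ is compact, it suffices to show $\mathcal{C}(\lambda_0)$ is closed in $\mathcal{C}(d)$. This is a semicontinuity-of-landing argument: if $f_n \to f$ with $\lambda_0 \subset \lambda_{f_n}$, then for each pair of $\lambda_0$-equivalent rational angles $(v,\theta) \sim (v,\theta')$ the rays $R_{f_n}(v,\theta)$ and $R_{f_n}(v,\theta')$ co-land, and one passes to the limit using stability of landing of rational rays at repelling/parabolic periodic points along sequences in the connectedness locus (the $\lambda_0$-classes are finite and, since $\lambda_0$ is post-critically finite, everything is eventually periodic, so landing points are repelling or parabolic and move continuously). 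Hence $\lambda_0 \subset \lambda_f$, giving closedness, hence compactness. The one subtlety is handling parabolic landing points in the limit, which I would address by the standard argument that a rational ray landing at a repelling periodic point for nearby maps continues to land, and co-landing is preserved.

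For (iii) $\Rightarrow$ (i): argue by contraposition. Suppose $\lambda_0$ is not primitive. By Lemma~\ref{lem-non-primitive} there is a $\lambda_0$-class $A$ (after replacing it by a suitable forward image, still a single class) with two distinct infinite unlinked classes accumulating on it; by the lemma's dichotomy and by Proposition~\ref{prop-nonempty} this forces either a critical class meeting the closure of an element of $CO^F(\lambda_0)$ — contradicting nonemptiness of $\mathcal{R}(\lambda_0)$, so that case is excluded under hypothesis (iii) — or the situation where two distinct Fatou critical orbits share a landing point. In the remaining case I would exhibit a sequence $f_n \in \mathcal{R}(\lambda_0)$ escaping every compact subset of $\mathcal{R}(\lambda_0)$: using Theorem~\ref{thm-pcf-tuning}/the combinatorial tuning results from \cite{Inou-Kiwi-straightening} one can realize larger and larger laminations $\lambda_n \supsetneq \lambda_0$ (forcing extra identifications at the shared fiber) by polynomials $f_n \in \mathcal{R}(\lambda_0)$ whose limit $f$ still has $\lambda_0 \subset \lambda_f$ but is no longer $\lambda_0$-renormalizable — the fibers have degenerated/merged — so $f \in \mathcal{C}(\lambda_0) \setminus \mathcal{R}(\lambda_0)$, contradicting either compactness of $\mathcal{R}(\lambda_0)$ or the equality in (ii). This shows $\neg$(i) $\Rightarrow \neg$(iii), completing the cycle. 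The delicate point here is ensuring the constructed $f_n$ genuinely lie in $\mathcal{R}(\lambda_0)$ and that the limit genuinely leaves it; I would lean on the tuning theorem and the fiber description to make the merging explicit.
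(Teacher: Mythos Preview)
The paper does not prove Theorem~\ref{thm-cpt} at all: it is one of several results \emph{recalled} from the companion paper \cite{Inou-Kiwi-straightening} in Section~\ref{sec-comb} (see the opening paragraph of that section), and no argument is supplied here. So there is no proof in this paper to compare your proposal against.

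That said, a brief remark on your sketch. Your (i)~$\Rightarrow$~(ii) is morally the right idea, and in fact the paper hints at the mechanism right after Lemma~\ref{lem-thickening}: ``$\lambda_0$ is primitive if and only if $E$ (hence $X$) can be taken as the empty set.'' That single sentence is the real content of (i)~$\Rightarrow$~(ii) once Lemma~\ref{lem-thickening} is available, and it is cleaner than invoking fiber disjointness directly. Your (ii)~$\Rightarrow$~(iii) via closedness of $\mathcal{C}(\lambda_0)$ in $\mathcal{C}(d)$ is the standard route. Your (iii)~$\Rightarrow$~(i) is the step where the sketch is genuinely incomplete: you need to actually produce the degenerating sequence, and the informal appeal to ``fibers merging'' is not yet an argument --- in the non-primitive case one typically uses the shared boundary class to pinch the renormalization (e.g.\ by tuning with maps whose small Julia sets touch), and verifying that the limit lies in $\mathcal{C}(\lambda_0)\setminus\mathcal{R}(\lambda_0)$ requires some care. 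If you want to see the full argument, you will have to consult \cite{Inou-Kiwi-straightening}.
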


To prove a given $f \in \mathcal{C}(\lambda_0)$ is
$\lambda_0$-renormalizable, we need the following lemma
\cite[Lemma~5.13, Corollary~5.16]{Inou-Kiwi-straightening}, 
which is based on the idea of ``thickening puzzles''
by Milnor \cite{Milnor-lc}.
\begin{lem}
 \label{lem-thickening}
 Let $\lambda_0$ be a rational lamination over a mapping schema $T_0$.
 Then there exists a proper algebraic set $X \subset \poly(T_0)$ such
 that
 $\mathcal{R}(\lambda_0) \supset \mathcal{C}(\lambda_0) \setminus X$.
 Moreover $X$ does not contain $\mathcal{C}(\lambda_0)$ if
 $\mathcal{R}(\lambda_0)$ is nonempty.
 
 More precisely, there exists a finite set of angles $E=E(\lambda_0)
 \subset \supp(\lambda_0)$ such that $f \in \mathcal{C}(\lambda_0)$
 is $\lambda_0$-renormalizable if the landing point of
 $R_f(v,\theta)$ is neither parabolic nor critical for any $(v,\theta)
 \in E$.
\end{lem}
Note that $\mathcal{C}(\lambda_0)$ might be contained in $X$,
and also notice that $\lambda_0$ is primitive if and only if $E$ (hence
$X$) can be taken as the empty set. 

\subsection{Combinatorial tuning}

Let $T_0=(|T_0|,\sigma_0,\delta_0)$ be a mapping schema.
Let $\lambda_0$ be a $T_0$-invariant rational
lamination and fix an internal angle system 
$(\alpha_w:\overline{w} \to \R/\Z)_{w \in |T(\lambda_0)|}$.

\begin{defn}[Combinatorial straightening]
 Let $\lambda$ be a $T_0$-invariant rational lamination containing
 $\lambda_0$.
 the \emph{combinatorial straightening of $\lambda$ with respect to
 $\lambda_0$} is a $T(\lambda_0)$-invariant rational lamination
 $\lambda'=(\lambda'_w)_{w \in |T(\lambda_0)|}$ such that
 $(w,\theta) \sim_{\lambda'_w}(w,\theta')$ if and only if there exist
 $t \in \alpha_w^{-1}(\theta)$ and $s \in \alpha_w^{-1}(\theta')$ such
 that $t$ and $s$ are $\lambda$-equivalent.
\end{defn}

Combinatorial tuning is the inverse operation of combinatorial straightening.
\begin{thm}[Combinatorial tuning]
 \label{thm-comb-tuning}
 Let $\lambda_0$ be a $T_0$-invariant rational lamination and let
 $\lambda'$ be a $T(\lambda_0)$-invariant rational lamination.
 Then there exists a $T_0$-invariant rational lamination $\lambda
 \supset \lambda_0$
 such that the combinatorial straightening of $\lambda$ with respect to
 $\lambda_0$ is $\lambda'$.
 Moreover,
 \begin{enumerate}
  \item if $\lambda_0$ and $\lambda'$ are hyperbolic, then $\lambda$ is
	hyperbolic.
  \item If $\lambda_0$ and $\lambda'$ are post-critically finite, then
	$\lambda$ is post-critically finite.
  \item If $\lambda_0$ is post-critically finite and $\lambda'$ is
	Misiurewicz, then $\lambda$ is Misiurewicz.
  \item If the rational lamination of $f \in \mathcal{R}(\lambda_0)$ is
	$\lambda$, then the rational lamination of $\chi_{\lambda_0}(f)$
	is $\lambda'$.
 \end{enumerate}
\end{thm}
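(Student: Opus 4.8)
The plan is to build $\lambda$ by inserting a copy of $\lambda'$ into each Fatou critical element of $\lambda_0$ and spreading it over the whole grand orbit under $m_{T_0}$, then to verify the six axioms of a rational lamination, and finally to read off (i)--(iv) by a degree and orbit bookkeeping. For the construction I would first invoke Proposition~\ref{prop-unlinked-class}: every infinite $\lambda_0$-unlinked class $L \subset \{v\}\times\R/\Z$ is eventually periodic, and along the cycle of critical gaps the internal angle maps $\alpha_w$ descend to homeomorphisms $\overline{w}/\lambda_0 \to \R/\Z$ conjugating the return dynamics to $m_{\delta(w)}$; pulling back under the relevant iterate of $m_{T_0}$ promotes this to a canonical internal angle map $\alpha_L:\overline{L}\to\R/\Z$, equivariant with $m_{T_0}$, on every gap in the grand orbit of a Fatou critical element. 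For each such $L$, lying over $w \in |T(\lambda_0)|$, and each nontrivial $\lambda'_w$-class $B$, the pullback $\alpha_L^{-1}(B)\subset\overline{L}$ is a finite union of $\lambda_0$-classes. I would then define $\lambda$ to be the smallest equivalence relation on $|T_0|\times\Q/\Z$ that contains $\lambda_0$, contains all these pullbacks, and is stable under pulling back along $m_{T_0}$ (so that preimages of classes split into classes consecutive-preservingly); equivalently, $(v,\theta)\sim_\lambda(v,\theta')$ iff they are $\lambda_0$-equivalent, or some iterate $m_{T_0}^n$ carries both into the closure of a single infinite unlinked class with $\lambda'$-equivalent $\alpha_L$-images, after closing up under the generated equivalence.

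The routine axioms --- every class lies in a single fiber, $m_{T_0}$ maps classes onto classes consecutive-preservingly, and pairwise unlinkedness (inherited from $\lambda_0$ globally and from the $\lambda'_w$ inside each gap) --- are immediate from the construction and the equivariance of the $\alpha_L$. The step I expect to be the main obstacle is proving that every $\lambda$-class is \emph{finite} and that $\lambda$ is \emph{closed}. For finiteness one must bound the propagation of a chain of identifications: by Lemma~\ref{lem-linked}, a $\lambda$-class that is not a $\lambda_0$-class is confined to, and linked with, the closure of a single $\lambda_0$-unlinked class, so two inserted classes can only merge through a $\lambda_0$-class lying in the common boundary of two distinct infinite gaps; Lemma~\ref{lem-non-primitive} (with Lemma~\ref{lem-same-period} pinning down periods) then shows that such collisions occur at only finitely many $\lambda_0$-classes up to the dynamics, giving an a priori bound on the size of a class. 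Closedness follows because $\lambda_0$ and each $\lambda'_w$ are closed, the $\alpha_L$ are continuous and monotone, and the diameters of the complementary arcs of the $\overline{L}$ and of the $\lambda_0$-sectors shrink: any limit of $\lambda$-related pairs either limits on a $\lambda_0$-related pair, or concentrates in the closure of one infinite unlinked class, where closedness of $\lambda'$ applies.

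For the properties, note first that $\alpha_L$ carries $\lambda$ back exactly onto $\lambda'$ inside each gap --- the $\lambda_0$-classes contributing to $\lambda$ are collapsed by $\alpha_w$ to points, hence add nothing --- so the combinatorial straightening of $\lambda$ with respect to $\lambda_0$ is precisely $\lambda'$, using only the definitions. Next, the critical elements of $\lambda$ are, up to the dynamics, the critical $\lambda_0$-classes and the finite critical $\lambda_0$-unlinked classes together with the critical elements of $\lambda'$ transported into the cycles of critical gaps, the degrees adding up correctly. Property (i): if $\lambda_0$ and $\lambda'$ are hyperbolic, there are no Julia critical elements on either side, hence none for $\lambda$. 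Property (ii): if both are post-critically finite then, since $|T(\lambda_0)|$ is finite, $PC(\lambda)$ is $PC(\lambda_0)$ together with finitely many transported copies of $PC(\lambda')$, hence finite. Property (iii): if $\lambda_0$ is post-critically finite it has no wandering critical unlinked class, and if moreover $\lambda'$ is Misiurewicz then filling each Fatou critical gap of $\lambda_0$ with $\lambda'$ converts all remaining criticality into preperiodic Julia classes, so $\crit(\lambda)=\crit^P(\lambda)$ and $\lambda$ is Misiurewicz.

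Finally, property (iv): if $f\in\mathcal{R}(\lambda_0)$ has rational lamination $\lambda$, then the landing pattern that the marked angles $\theta_w$ and the internal angle maps $\alpha_w$ read off inside the $\lambda_0$-renormalization $g$ of $f$ is, by the definitions of $\chi_{\lambda_0}$ and of combinatorial straightening, exactly the combinatorial straightening of $\lambda_f=\lambda$; since a hybrid conjugacy respecting external markings identifies this pattern with the landing relation of the external rays of $\chi_{\lambda_0}(f)$, it follows that $\lambda_{\chi_{\lambda_0}(f)}$ is that combinatorial straightening, which is $\lambda'$ by construction. The only genuinely hard part of the whole argument is the rational-lamination verification of the second paragraph; everything else is bookkeeping driven by Proposition~\ref{prop-unlinked-class} and the three lemmas recalled above.
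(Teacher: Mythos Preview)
The present paper does not contain a proof of this theorem: Section~\ref{sec-comb} explicitly recalls definitions and results from the preceding paper \cite{Inou-Kiwi-straightening}, and Theorem~\ref{thm-comb-tuning} is one of those imported results, stated here without proof. So there is no ``paper's own proof'' to compare your proposal against.

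That said, your sketch follows what is almost certainly the intended construction in \cite{Inou-Kiwi-straightening}: define $\lambda$ as the equivalence relation generated by $\lambda_0$ together with the $\alpha_L$-pullbacks of the $\lambda'$-classes into every infinite $\lambda_0$-unlinked class in the grand orbit of $|T(\lambda_0)|$, and then check the axioms. Your identification of finiteness and closedness as the only substantive points is accurate, and the degree bookkeeping for (i)--(iii) and the hybrid-conjugacy argument for (iv) are the right ideas.

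One caution: in your finiteness argument you invoke Lemma~\ref{lem-non-primitive} to bound chains of identifications across touching gaps, but that lemma is stated only for \emph{post-critically finite} $\lambda_0$, whereas the existence clause of Theorem~\ref{thm-comb-tuning} makes no such hypothesis. You will need either a version of that bound valid for arbitrary $T_0$-invariant $\lambda_0$ (using that all infinite unlinked classes are eventually periodic, Proposition~\ref{prop-unlinked-class}), or a more direct argument that a $\lambda$-class can meet the closures of only finitely many infinite $\lambda_0$-unlinked classes. This is the one place where your outline, as written, leans on a lemma whose hypotheses may not be available.
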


\begin{proof}
 See \cite[Proposition~5.6]{Inou-Kiwi-straightening} for the first two
 statements. The third and fourth statements follow easily.
\end{proof}

By using the combinatorial tuning, we can actually do ``tuning'' in most
cases of post-critically finite dynamics
\cite[Theorem~5.2]{Inou-Kiwi-straightening}:
\begin{thm}[Post-critically finite tuning]
 \label{thm-pcf-tuning}
 Let $\lambda_0$ be a rational lamination over a mapping schema $T_0$
 such that $\mathcal{R}(\lambda_0) \ne \emptyset$.

 Then there exists a codimension one algebraic set $Y\subset
 \poly(T(\lambda_0))$ such that if
 $P \in \mathcal{C}(T(\lambda_0)) \setminus Y$ is post-critically
 finite, then there exists $f \in \mathcal{R}(\lambda_0)$ such that
 $\chi_{\lambda_0}(f)=P$.

 Furthermore, if $\lambda_0$ is post-critically finite, then such $f$ is
 unique.
\end{thm}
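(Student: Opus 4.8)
The plan is to realize $P$ by combinatorially tuning its rational lamination into $T_0$ and then invoking post-critically finite rigidity. First I would set $\lambda' := \lambda_P$, the rational lamination of $P$; since $P$ is post-critically finite, so is $\lambda'$. Applying the Combinatorial Tuning theorem (Theorem~\ref{thm-comb-tuning}) to $\lambda_0$ and this $\lambda'$ yields a $T_0$-invariant rational lamination $\lambda \supset \lambda_0$ whose combinatorial straightening with respect to $\lambda_0$ equals $\lambda'$; moreover, if $\lambda_0$ is post-critically finite then part~(ii) of that theorem gives that $\lambda$ is too. By the theorem of Kiwi there is a polynomial $f \in \mathcal{C}(T_0)$ with $\lambda_f = \lambda$, and when $\lambda$ is post-critically finite one may in fact take $f$ post-critically finite (Thurston realization); in general it suffices to take a realization whose critical points lying in the Fatou critical $\lambda_0$-fibers are (pre)periodic, since only those are seen by the $\lambda_0$-renormalization.

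The next step is to upgrade $f$ from combinatorially renormalizable to genuinely $\lambda_0$-renormalizable. By Lemma~\ref{lem-thickening}, $f \in \mathcal{R}(\lambda_0)$ provided the landing point of $R_f(v,\theta)$ is neither parabolic nor critical for every $(v,\theta)$ in the finite set $E(\lambda_0) \subset \supp(\lambda_0)$. Because $f$ has no parabolic cycles, the parabolic alternative never occurs, so the only obstruction is that some $E(\lambda_0)$-ray of $f$ lands at a critical point of $f$. Whether this happens is determined by $\lambda$, hence by $\lambda' = \lambda_P$: transporting the angles of $E(\lambda_0)$ through the internal angle system, the statement ``a given $E(\lambda_0)$-class is critical'' translates into finitely many algebraic conditions on $\poly(T(\lambda_0))$, each cut out by a single equation and hence a hypersurface. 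I would take $Y$ to be this finite union of hypersurfaces; it is a proper algebraic set, hence of codimension one, precisely because $\mathcal{R}(\lambda_0) \ne \emptyset$ (equivalently, by the non-linking condition of Proposition~\ref{prop-nonempty}), and for post-critically finite $P \notin Y$ the above realization satisfies Lemma~\ref{lem-thickening} and therefore lies in $\mathcal{R}(\lambda_0)$.

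It then remains to identify $\chi_{\lambda_0}(f)$ with $P$. By Theorem~\ref{thm-comb-tuning}(iv), the rational lamination of $\chi_{\lambda_0}(f)$ is $\lambda' = \lambda_P$; and $\chi_{\lambda_0}(f)$ is post-critically finite because all its critical points lie in its (fiberwise connected) filled Julia set and are images, under the hybrid conjugacy --- which is holomorphic on the interior --- of the (pre)periodic critical points of the $\lambda_0$-renormalization of $f$. Two post-critically finite polynomials over $T(\lambda_0)$ with equal rational laminations are affinely conjugate by combinatorial rigidity (Thurston), and being monic centered this forces $\chi_{\lambda_0}(f) = P$. Finally, when $\lambda_0$ is post-critically finite, $\chi_{\lambda_0}$ is injective by Theorem~\ref{thm-inj}, so such an $f$ is unique.

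The step I expect to be the main obstacle is the construction of $f$ together with the control of $Y$: one must simultaneously achieve $\lambda_f = \lambda$ exactly, keep the $\lambda_0$-renormalization of $f$ post-critically finite, and stay off the thickening bad set of Lemma~\ref{lem-thickening} --- and, most delicately, prove that the post-critically finite $P$ for which this is impossible all lie on one genuinely codimension-one algebraic subset of $\mathcal{C}(T(\lambda_0))$ rather than merely a proper one. This is where the finiteness of $E(\lambda_0)$, the explicit internal angle system, and the hypothesis $\mathcal{R}(\lambda_0) \ne \emptyset$ all enter.
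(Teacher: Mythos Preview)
The paper does not prove this theorem here: it is quoted verbatim from the companion paper \cite[Theorem~6.2]{Inou-Kiwi-straightening}, so there is no in-paper argument to compare against. Your outline---combinatorially tune $\lambda_P$ into $\lambda_0$ to obtain $\lambda$, realize $\lambda$ by a post-critically finite $f$, use Lemma~\ref{lem-thickening} to get $f\in\mathcal{R}(\lambda_0)$ off a bad set, and then identify $\chi_{\lambda_0}(f)$ with $P$ via combinatorial rigidity---is the natural strategy and is almost certainly what the companion paper does.

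There is, however, a genuine gap in your argument as written. The first part of the theorem does \emph{not} assume $\lambda_0$ is post-critically finite, so Theorem~\ref{thm-comb-tuning}(ii) is unavailable and you cannot conclude that the tuned lamination $\lambda$ is post-critically finite. Consequently you cannot simply invoke Thurston realization to produce a post-critically finite $f$ with $\lambda_f=\lambda$; and without post-critical finiteness of $f$ (or at least of its $\lambda_0$-renormalization) you cannot conclude that $\chi_{\lambda_0}(f)$ is post-critically finite, which is what you need for the rigidity step ``same rational lamination $\Rightarrow$ equal''. Your parenthetical ``in general it suffices to take a realization whose critical points lying in the Fatou critical $\lambda_0$-fibers are (pre)periodic'' is exactly the missing construction, and it is not free: one must show that among all $f$ with $\lambda_f\supset\lambda$ there is one whose Fatou critical points in the $\lambda_0$-fibers have the \emph{specific} (pre)periods dictated by $P$, and that this choice still avoids the thickening obstructions. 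You correctly flag this as the main obstacle; it is indeed the heart of the proof, and your sketch does not yet supply it. A secondary point: your description of $Y$ (``finitely many algebraic conditions'' obtained by transporting $E(\lambda_0)$ through the internal angle system) is plausible but needs to be made precise enough to see that each condition is genuinely a hypersurface and that their union is proper.
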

Note that the last part follows from the injectivity of
$\chi_{\lambda_0}$ (Theorem~\ref{thm-inj}).

The algebraic set $Y$ in the theorem is defined in a similar way as in
Theorem~\ref{lem-thickening}.
In particular, it is empty when $\lambda_0$ is primitive.
We call $f$ the \emph{tuning of $\lambda_0$ and $P$},
or when $f_0 \in \poly(T_0)$ satisfies $\lambda_{f_0}=\lambda_0$,
we also say $f$ is the \emph{tuning of $f_0$ and $P$}.
If $\lambda_0$ is post-critically finite, then such $f$ is also
post-critically finite.

\begin{lem}
 \label{lem-comb-tuning-primitive}
 Let $\lambda_0$ be a rational lamination over a mapping schema $T_0$
 and let $\lambda$ be a $T(\lambda_0)$-invariant rational lamination.
 
 If $\lambda$ is primitive and all periods of periodic
 $\lambda$-unlinked classes are sufficiently large, then the
 combinatorial tuning $\lambda_1$ of $\lambda_0$ and $\lambda$ is also
 primitive.
\end{lem}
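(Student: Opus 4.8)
The plan is to argue by contradiction, combining the structural description of the combinatorial tuning with the period estimate of Lemma~\ref{lem-same-period}. Write $\mathcal{O}$ for the grand orbit under $m_{T_0}$ of the Fatou critical unlinked classes, i.e.\ of the elements of $|T(\lambda_0)|=\crit^F(\lambda_0)$. Since the combinatorial straightening of $\lambda_1$ with respect to $\lambda_0$ is $\lambda$, we have $\lambda_1\supset\lambda_0$, every identification made by $\lambda_1$ but not by $\lambda_0$ occurs on $\partial w$ for some $w\in\mathcal{O}$, and through the internal angle maps $\alpha_w$ the restriction of $\lambda_1$ to $\overline{w}$ is the pull-back of the fiber of $\lambda$ over $w$. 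In particular every infinite $\lambda_1$-unlinked class $w_1$ lies in a (necessarily infinite) $\lambda_0$-unlinked class $W_1$: if $W_1\notin\mathcal{O}$ then $w_1=W_1$, and if $W_1\in\mathcal{O}$ then $\alpha_{W_1}(w_1)$ is an infinite $\lambda$-unlinked class, and Lemma~\ref{lem-same-period} relates the eventual periods of the periodic boundary angles of $w_1$ and of $\alpha_{W_1}(w_1)$ up to a bounded factor.

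Now suppose $\lambda_1$ is not primitive: there are distinct infinite $\lambda_1$-unlinked classes $w_1\ne w_2$ in a common fiber and a $\lambda_1$-class $A$ with $A\cap\overline{w_1}\ne\emptyset$ and $A\cap\overline{w_2}\ne\emptyset$. Let $W_i\supset w_i$ be as above. In the post-critically finite setting in which the lemma is applied, every infinite $\lambda_0$-unlinked class lies in $\mathcal{O}$, so $W_1,W_2\in\mathcal{O}$ and $w_i\ne W_i$. If $W_1=W_2=:W$, then $\alpha_W$ carries $A\cap\overline{W}$ to a single (possibly trivial) $\lambda$-class whose closure meets both $\overline{\alpha_W(w_1)}$ and $\overline{\alpha_W(w_2)}$, which are distinct infinite $\lambda$-unlinked classes; this contradicts the primitivity of $\lambda$.

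It remains to exclude the case $W_1\ne W_2$. Here $A$ meets $\overline{W_1}$ and $\overline{W_2}$; since new $\lambda_1$-identifications live within a single $\overline{w}$ and, by post-critical finiteness, two distinct infinite $\lambda_0$-unlinked classes can share at most a rational boundary angle, tracing the tuning construction — invoking Lemma~\ref{lem-linked} when $A$ is not itself a $\lambda_0$-class — produces a genuine $\lambda_0$-class $B$ whose closure meets $\overline{W_1}$ and $\overline{W_2}$, so $\lambda_0$ is not primitive. By the last assertion of Lemma~\ref{lem-non-primitive} the eventual periods of such classes $B$, and hence the eventual periods of the relevant boundary angles of $W_1$ lying on forward images of $B$, are bounded by a constant $N_0=N_0(\lambda_0)$; transporting through $\alpha_{W_1}$ by Lemma~\ref{lem-same-period} shows that $\alpha_{W_1}(w_1)$ is an infinite $\lambda$-unlinked class of eventual period at most $2N_0$. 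Taking the threshold ``sufficiently large'' in the statement to be larger than $2N_0(\lambda_0)$ then gives a contradiction.

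The bookkeeping in the first paragraph — pinning down exactly which $\lambda_1$-classes and unlinked classes the tuning produces, and checking that $\alpha_w$ transports them as claimed, including the behaviour of the ``fat'' complementary gaps of an $\mathcal{O}$-class and of trivial classes — is routine but needs care. The genuine obstacle is the case $W_1\ne W_2$: one must really extract, from a $\lambda_1$-class linking two infinite unlinked classes seated in distinct $\mathcal{O}$-classes, a $\lambda_0$-class whose closure meets both, and bound its eventual period purely in terms of $\lambda_0$, so that the ``sufficiently large period'' hypothesis on $\lambda$ can be used to close the argument. This is the step that forces all of Lemma~\ref{lem-non-primitive}, Proposition~\ref{prop-nonempty} (keeping critical $\lambda_0$-classes out of the way) and Lemma~\ref{lem-same-period} into play, and where the meaning of ``sufficiently large'' is pinned down.
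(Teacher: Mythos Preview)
Your approach is the same as the paper's: contradiction, case split on whether the containing $\lambda_0$-unlinked classes satisfy $W_1=W_2$ or not, transport via $\alpha$ to contradict primitivity of $\lambda$ in the first case, and Lemma~\ref{lem-non-primitive} together with Lemma~\ref{lem-same-period} in the second.

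One point to sharpen in the case $W_1 \ne W_2$: you claim the extracted $\lambda_0$-class $B$ meets both $\overline{W_1}$ and $\overline{W_2}$, but this need not hold literally, and Lemma~\ref{lem-linked} does not deliver it. What the paper asserts (and what suffices) is only that some $\lambda_0$-subclass $B \subset A$ meets the closures of two \emph{distinct} infinite $\lambda_0$-unlinked classes $M_1, M_3$ --- one of which can be taken to be $W_1$, but the other need not be $W_2$. This weaker statement follows directly from the structure of the tuning: the $\lambda_1$-identifications beyond $\lambda_0$ link points only within a single $\overline{w}$ for $w$ an infinite $\lambda_0$-unlinked class, so walking along the chain of $\lambda_0$-subclasses of $A$ from a point of $\overline{W_1}$ to a point of $\overline{W_2}$ one must encounter a subclass touching two different such closures. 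With that correction your argument goes through, and Proposition~\ref{prop-nonempty} is not actually needed.
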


\begin{proof}
 Assume $\lambda_1$ is not primitive,
 that is, there exist some $\lambda_1$-unlinked classes
 $L_1$ and $L_2$ and $\lambda_1$-class $A$
 such that $\overline{L_j} \cap A \ne \emptyset$ for $j=1,2$.

 If $L_1$ and $L_2$ lie in the same $\lambda_0$-unlinked class $M$,
 then there exists some $n\ge 0$ such that $v=m_{T_0}^n(M) \in
 |T(\lambda_0)|$ and $m_{T_0}^n:M \to v$ is a cyclic order
 preserving bijection.
 This implies that $L_1' = \alpha_v(m_{T_0}^n(L_1))$ and
 $L_2' = \alpha_v(m_{T_0}^n(L_2))$ are $\lambda$-unlinked class and there
 exists a $\lambda$-class $B$ intersecting both $\overline{L_1'}$ and
 $\overline{L_2'}$. Therefore $\lambda$ is not primitive, that is a
 contradiction.
 
 Let $M_j$ be the $\lambda_0$-unlinked class containing $L_j$.
 We have proved $M_1 \ne M_2$. We also have $\overline{M_j} \cap A \ne
 \emptyset$. 
 Let $B \subset A$ be the $\lambda_0$-class such that $\overline{M_1}
 \cap B \ne \emptyset$.
 Then there exists $\lambda_0$-unlinked class $M_3 \ne M_1$
 such that $\overline{M_3} \cap B$ are non-empty.
 Since the eventual periods of $A$ and $B$ are the same,
 there are only finite possibility for the eventual period of $A$ by
 Lemma~\ref{lem-non-primitive}.

 Therefore, if all periods of periodic $\lambda$-unlinked classes are
 sufficiently large, all periodic angles in the closures of
 $\lambda$-unlinked classes have periods greater than that of such $A$
 by Lemma~\ref{lem-same-period}.
 Hence the above argument shows that there are no such triple
 $(L_1,L_2,A)$, so $\lambda$ is primitive.
\end{proof}

\section{Continuity of straightening maps}
\label{sec-conti}
Now we give some sufficient condition for straightening maps to be
continuous at a given map $f \in \mathcal{R}(\lambda_0)$.

The argument on continuity of straightening maps by Douady and Hubbard
(Theorem~\ref{thm-quad-conti}, Lemma~\ref{lem-qc-conj}) can be applied
to our case. In particular, we have the following:
\begin{lem}
 \label{lem-conti-DH}
 Let $\lambda_0$ be a rational lamination over $T_0$.
 Assume $f_n$ converges to $f$ in $\mathcal{R}(\lambda_0)$ and 
 $\chi_{\lambda_0}(f_n)$ converges to $h \in \mathcal{C}(T(\lambda_0))$.
 Then there exist some $K \ge 1$ independent of $n$ and a $K$-quasiconformal
 hybrid conjugacy $\psi_n$ between a $\lambda_0$-renormalization of
 $f_n$ and $\chi_{\lambda_0}(f_n)$ 
 such that, by passing to a
 subsequence,  $\psi_n$ converges to a $K$-quasiconformal conjugacy
 $\psi$ between $\lambda_0$-renormalization of $f$ and $h$.
 In particular, $h$ and $\chi_{\lambda_0}(f)$ are quasiconformally
 equivalent.
\end{lem}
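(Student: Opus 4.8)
The plan is to mimic the proof of the analogous statement for genuine polynomial-like families (Lemma~\ref{lem-qc-conj}), transplanting the argument to the renormalization setting over the mapping schema $T(\lambda_0)$. First I would fix, for each $n$, a $\lambda_0$-renormalization $g_n=(f_n^{\ell_w}:U_{n,w}'\to U_{n,\sigma(w)})$ of $f_n$ together with its hybrid conjugacy $\psi_n$ onto $\chi_{\lambda_0}(f_n)$ respecting the external marking coming from the chosen angles $\theta_w$. The key point is that the dilatations of the $\psi_n$ are uniformly bounded. This follows because the filled Julia sets $K_{f_n}(w)$ vary continuously (they are $\lambda_0$-fibers, controlled by the external rays $R_{f_n}(v,\theta)$ for the finite set of angles involved in $\lambda_0$ and in the internal angle system), so after a harmless normalization the domains $U_{n,w}'\Subset U_{n,w}$ can be chosen to vary continuously with $n$ (and with a definite modulus $U_{n,w}'\Subset U_{n,w}$), and $f_n^{\ell_w}$ depends continuously on $n$; hence the renormalizations $g_n$ converge, after passing to a subsequence, to a polynomial-like map $g$ over $T(\lambda_0)$, which is a $\lambda_0$-renormalization of the limit $f$ by the continuity of the fibers. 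A uniform bound $K$ on the dilatations then comes from the standard quasiconformal surgery estimate in the proof of the straightening theorem, which only depends on the moduli $U_{n,w}'\Subset U_{n,w}$ and on the degree.

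Next I would extract a limit of the conjugacies. Since $K$-quasiconformal maps normalized on the bounded sets $U_{n,w}$ form a normal family, after passing to a further subsequence $\psi_n$ converges locally uniformly to a $K$-quasiconformal map $\psi$ defined on a neighborhood of $K(g)$. Passing to the limit in $\psi_n\circ g_n=\chi_{\lambda_0}(f_n)\circ\psi_n$ and using $g_n\to g$, $\chi_{\lambda_0}(f_n)\to h$ gives $\psi\circ g=h\circ\psi$, so $\psi$ is a $K$-quasiconformal conjugacy between the $\lambda_0$-renormalization $g$ of $f$ and $h$. Here I also need to check that $\psi$ respects external markings in the limit: this is because the marking external rays $R_{f_n}(w,\theta_w)$ land continuously and $\psi_n$ sends them to the angle-$0$ rays of $\chi_{\lambda_0}(f_n)$, so the landing behavior is preserved under the limit (discontinuity of external rays at $t=0$ does not matter, since markings are homotopy classes rel $K$ and the relevant part of each ray stays in a fixed compact annulus). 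One must also argue that $h$ has fiberwise connected filled Julia set, which holds because it is quasiconformally conjugate to $g$ whose filled Julia set is fiberwise connected.

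Finally, the straightening theorem for polynomial-like maps over mapping schemata tells us that $g$ is hybrid equivalent, respecting external markings, to a unique member of $\mathcal{C}(T(\lambda_0))$, namely $\chi_{\lambda_0}(f)$; and the $K$-quasiconformal conjugacy $\psi$ exhibits $h$ and this $\chi_{\lambda_0}(f)$ as quasiconformally equivalent (both being quasiconformally conjugate to the common model $g$). This gives the last assertion. I expect the main obstacle to be the uniformity claim: showing that the renormalization domains $U_{n,w}'\Subset U_{n,w}$ can be chosen continuously in $n$ with a uniform modulus, so that the surgery dilatation is uniformly bounded and the $g_n$ subconverge to a genuine $\lambda_0$-renormalization of $f$. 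This is exactly where one invokes continuity of $\lambda_0$-fibers and the ``thickening'' construction of renormalization domains from the preceding paper (cf.\ Lemma~\ref{lem-thickening}); everything else is a routine normal-families limiting argument parallel to Douady--Hubbard's original proof.
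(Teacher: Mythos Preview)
Your proposal is correct and follows exactly the approach the paper intends: the paper does not give a detailed proof of this lemma but simply states that ``the argument on continuity of straightening maps by Douady and Hubbard (Theorem~\ref{thm-quad-conti}, Lemma~\ref{lem-qc-conj}) can be applied to our case,'' and your write-up is precisely that transplantation to the mapping-schema setting, including the identification of the one nontrivial point (uniform moduli for the renormalization domains, handled via the thickening construction).
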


\begin{thm}
 \label{thm-conti}
 Let $\lambda_0$ be a rational lamination over $T_0$.
 If $f \in \mathcal{R}(\lambda_0)$ is quasiconformally
 rigid, then $\chi_{\lambda_0}$ is continuous at $f$.
\end{thm}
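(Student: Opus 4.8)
The plan is the Douady--Hubbard argument: discontinuity of $\chi_{\lambda_0}$ at $f$ would, via Lemma~\ref{lem-conti-DH}, produce a non-hybrid quasiconformal deformation of a $\lambda_0$-renormalization of $f$, and implanting it back into $f$ by a surgery would yield a monic centered polynomial quasiconformally conjugate to but different from $f$, contradicting rigidity. So suppose $\chi_{\lambda_0}$ is not continuous at $f$. Since $\mathcal{C}(T(\lambda_0))$ is compact, there exist $f_n \to f$ in $\mathcal{R}(\lambda_0)$ and $h \in \mathcal{C}(T(\lambda_0))$ with $h \neq \chi_{\lambda_0}(f)$ and $\chi_{\lambda_0}(f_n) \to h$, and it suffices to reach a contradiction. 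By Lemma~\ref{lem-conti-DH}, after a further subsequence there is a $K$-quasiconformal conjugacy $\psi$ between a fixed $\lambda_0$-renormalization $g = (f^{\ell_w}\colon U_w' \to U_{\sigma(w)})$ of $f$ and $h$, obtained as a uniform limit of hybrid conjugacies $\psi_n$ between $\lambda_0$-renormalizations of $f_n$ and $\chi_{\lambda_0}(f_n)$; arguing as in \cite{Douady-Hubbard-poly-like}, $\psi$ respects the external markings as well. Here $\psi$ itself need not be hybrid --- if it were, the uniqueness part of the straightening theorem would already give $h = \chi_{\lambda_0}(f)$ --- so instead I transport the conformal structure of $h$ into the dynamical plane of $f$.

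For the surgery, put on the pieces $U_w'$ the Beltrami form $\psi^{*}\mu_0$, where $\mu_0$ is the standard structure in the plane of $h$; it is $f^{\ell_w}$-invariant because $\psi$ conjugates $f^{\ell_w}$ to the dynamics of $h$ and $\mu_0$ is $h$-invariant. Spreading it over the grand orbit of $\bigcup_w U_w'$ under $f$ by pulling back with iterates of $f$, and keeping it standard off that grand orbit, yields an $f$-invariant Beltrami form $\mu$ on $\C$ with $\|\mu\|_\infty < 1$ that equals $\mu_0$ near $\infty$. Integrating $\mu$ by the measurable Riemann mapping theorem and normalizing the integrating map $\phi$ to be tangent to the identity at $\infty$, I get a quasiconformal homeomorphism $\phi$ of $\C$ such that $\tilde f := \phi \circ f \circ \phi^{-1}$ is a monic centered polynomial of degree $d$, and $\phi$ is a quasiconformal conjugacy between $f$ and $\tilde f$.

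Because $\phi$ is conformal near $\infty$ and tangent to the identity there, it carries the external ray of angle $\theta$ for $f$ to that of $\tilde f$, so $\lambda_{\tilde f} = \lambda_f \supset \lambda_0$ and $\tilde f \in \mathcal{C}(\lambda_0)$; and $(\phi \circ f^{\ell_w}\circ\phi^{-1}\colon \phi(U_w') \to \phi(U_{\sigma(w)}))$ is a $\lambda_0$-renormalization of $\tilde f$ with fiberwise connected Julia set, so $\tilde f \in \mathcal{R}(\lambda_0)$. Since $\psi$ and $\phi$ pull $\mu_0$ back to the same form on $\bigcup_w U_w'$, the map $\psi \circ \phi^{-1}$ is conformal there, hence a hybrid conjugacy respecting external markings between this renormalization and $h$; by the straightening theorem for polynomial-like mappings over mapping schemata, $\chi_{\lambda_0}(\tilde f) = h$. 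Finally, since $\phi$ preserves the rational lamination, quasiconformal rigidity of $f$ forces $\tilde f$ to be affinely conjugate to $f$, hence equal to $f$ up to post-composition by a $(d-1)$-st root of unity, a symmetry that does not change the straightening; therefore $h = \chi_{\lambda_0}(\tilde f) = \chi_{\lambda_0}(f)$, a contradiction.

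The main obstacle is the bookkeeping of the surgery: verifying that the spread-out form $\mu$ is genuinely $f$-invariant with $\|\mu\|_\infty < 1$ --- this is where $\lambda_0$-renormalizability of $f$ is essential, since it guarantees that $g$ is an honest polynomial-like mapping over $T(\lambda_0)$ with $K(g,w) = K_f(w)$, keeping the grand orbit of $\bigcup_w U_w'$ clear of the rest of the $\lambda_0$-combinatorics --- and checking that $\psi$, and then $\psi \circ \phi^{-1}$, respects external markings, so that $\chi_{\lambda_0}(\tilde f)$ is exactly $h$ rather than merely something hybrid equivalent to it. These points are handled as in \cite{Douady-Hubbard-poly-like} and the preceding paper \cite{Inou-Kiwi-straightening}.
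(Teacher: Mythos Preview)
Your argument is correct and follows essentially the same route as the paper's proof: both start from Lemma~\ref{lem-conti-DH} to obtain a quasiconformal conjugacy between a $\lambda_0$-renormalization of $f$ and the putative limit $h$, and both then produce a polynomial $\tilde f \in \mathcal{R}(\lambda_0)$ quasiconformally conjugate to $f$ with $\chi_{\lambda_0}(\tilde f)=h$, after which quasiconformal rigidity closes the argument. The only difference is packaging: the paper invokes \cite[Lemma~8.6]{Inou-Kiwi-straightening} as a black box for the existence of $\tilde f$, whereas you carry out the underlying pull-back/surgery construction explicitly. Your version is thus more self-contained, at the cost of some routine bookkeeping that you correctly flag and defer to \cite{Douady-Hubbard-poly-like} and \cite{Inou-Kiwi-straightening}.

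One small point: your final sentence asserts that the residual affine ambiguity (conjugation by a $(d-1)$-st root of unity) ``does not change the straightening.'' This deserves a word of justification, since such a rotation shifts external angles and hence in principle could move the external marking used to define $\chi_{\lambda_0}$. The paper's own proof is equally terse here, simply writing $\tilde f=f$; in either presentation one should note that since $\phi$ is tangent to the identity at infinity it preserves external angles, so $\tilde f$ and $f$ share the \emph{same} rational lamination and the same marked rays, which together with affine conjugacy forces the straightenings to agree.
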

We say $f \in \poly(T_0)$ is \emph{quasiconformally rigid} if
any $g \in
\poly(T_0)$ quasiconformally conjugate to $f$ is affinely
conjugate to $f$.

\begin{proof}
 Assume $f_n$ converges to $f$ in $\mathcal{R}(\lambda_0)$.
 Then by the lemma above, we may assume that
 $\chi_{\lambda_0}(f_n)$ converges to $h$, which is quasiconformally
 conjugate to $\chi_{\lambda_0}(f)$.
 Hence it follows that there exists $\tilde{f} \in
 \mathcal{R}(\lambda_0)$ such that $\chi_{\lambda_0}(\tilde{f})=h$ and
 $\tilde{f}$ is quasiconformally conjugate to $f$
 \cite[Lemma~9.2]{Inou-Kiwi-straightening}.
 
 By assumption, we have $\tilde{f}=f$ and $h=\chi_{\lambda_0}(f)$.
\end{proof}

Similarly, we have some partial continuity of $\chi_{\lambda_0}^{-1}$.
Observe that when $\lambda_0$ is post-critically finite, 
$\chi_{\lambda_0}^{-1}:\chi_{\lambda_0}(\mathcal{R}(\lambda_0)) \to
\mathcal{R}(\lambda_0)$ is well-defined since $\chi_{\lambda_0}$ is injective.

\begin{prop}
 \label{prop-conti-inv}
 Under the assumption of Theorem~\ref{thm-conti},
 assume $\lambda_0$ is post-critically finite and 
 there exists a convergent sequence $f_n \to \tilde{f}$ in
 $\mathcal{R}(\lambda_0)$ such that $\chi_{\lambda_0}(f_n) \to
 \chi_{\lambda_0}(f)$.
 Then $\tilde{f}=f$.
\end{prop}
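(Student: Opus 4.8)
The plan is to run the same quasiconformal-rigidity argument as in Theorem~\ref{thm-conti}, but now using the injectivity of $\chi_{\lambda_0}$ (Theorem~\ref{thm-inj}), which is available because $\lambda_0$ is post-critically finite. First I would record the hypotheses: $f$ is quasiconformally rigid, $\lambda_0$ is post-critically finite, and $f_n \to \tilde f$ in $\mathcal{R}(\lambda_0)$ with $\chi_{\lambda_0}(f_n) \to \chi_{\lambda_0}(f)$. I want to conclude $\tilde f = f$.

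The key steps are as follows. First, apply Lemma~\ref{lem-conti-DH} to the convergent sequence $f_n \to \tilde f$: after passing to a subsequence, the renormalizations are $K$-quasiconformally hybrid conjugate to $\chi_{\lambda_0}(f_n)$ with $K$ uniform in $n$, and the conjugacies converge to a $K$-quasiconformal conjugacy between a $\lambda_0$-renormalization of $\tilde f$ and $\lim_n \chi_{\lambda_0}(f_n) = \chi_{\lambda_0}(f)$. In particular $\chi_{\lambda_0}(\tilde f)$ and $\chi_{\lambda_0}(f)$ are quasiconformally equivalent. Second, invoke \cite[Lemma~8.6]{Inou-Kiwi-straightening} exactly as in the proof of Theorem~\ref{thm-conti}: there exists $\hat f \in \mathcal{R}(\lambda_0)$ with $\chi_{\lambda_0}(\hat f) = \chi_{\lambda_0}(f)$ and $\hat f$ quasiconformally conjugate to $\tilde f$. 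Third, since $\chi_{\lambda_0}$ is injective (Theorem~\ref{thm-inj}, using post-critical finiteness), $\chi_{\lambda_0}(\hat f) = \chi_{\lambda_0}(f)$ forces $\hat f = f$. Hence $\tilde f$ is quasiconformally conjugate to $f$. Finally, quasiconformal rigidity of $f$ gives that $\tilde f$ is affinely conjugate to $f$, hence $\tilde f = f$ in $\poly(T_0)$ (monic centered representatives).

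I expect the main subtlety to be bookkeeping about \emph{which} map the limiting conjugacy conjugates, and about the direction in which \cite[Lemma~8.6]{Inou-Kiwi-straightening} is applied --- one must be careful that the lemma produces a preimage of $\chi_{\lambda_0}(f)$ under $\chi_{\lambda_0}$ that is q.c.-conjugate to $\tilde f$, not to $f$, so that injectivity can then be used to identify it with $f$ and transfer the q.c.-conjugacy to $\tilde f$. Everything else is a direct reprise of the proof of Theorem~\ref{thm-conti}, so the argument is short; the only genuinely new ingredient compared to that theorem is the use of injectivity, which is precisely what the extra post-critical finiteness hypothesis buys.

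\begin{proof}
 By Lemma~\ref{lem-conti-DH} applied to the sequence $f_n \to
 \tilde{f}$ (passing to a subsequence), the limit
 $h=\lim_n \chi_{\lambda_0}(f_n)=\chi_{\lambda_0}(f)$ is
 quasiconformally equivalent to $\chi_{\lambda_0}(\tilde{f})$.
 Hence by \cite[Lemma~8.6]{Inou-Kiwi-straightening}, there exists
 $\hat{f} \in \mathcal{R}(\lambda_0)$ with
 $\chi_{\lambda_0}(\hat{f})=h=\chi_{\lambda_0}(f)$ and $\hat{f}$
 quasiconformally conjugate to $\tilde{f}$.
 Since $\lambda_0$ is post-critically finite, $\chi_{\lambda_0}$ is
 injective by Theorem~\ref{thm-inj}, so $\hat{f}=f$.
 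Thus $\tilde{f}$ is quasiconformally conjugate to $f$, and since $f$ is
 quasiconformally rigid, $\tilde{f}$ is affinely conjugate to $f$, hence
 $\tilde{f}=f$.
\end{proof}
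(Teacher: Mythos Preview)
Your proof is correct and follows essentially the same approach as the paper: apply Lemma~\ref{lem-conti-DH}, then \cite[Lemma~8.6]{Inou-Kiwi-straightening}, then combine quasiconformal rigidity of $f$ with injectivity of $\chi_{\lambda_0}$ (Theorem~\ref{thm-inj}). The only cosmetic difference is that the paper applies \cite[Lemma~8.6]{Inou-Kiwi-straightening} in the other direction---producing $\hat f$ q.c.-conjugate to $f$ with $\chi_{\lambda_0}(\hat f)=\chi_{\lambda_0}(\tilde f)$, then using rigidity first ($\hat f=f$) and injectivity second ($f=\tilde f$)---whereas you use injectivity first and rigidity second; the two orderings are interchangeable.
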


\begin{proof}
 By Lemma~\ref{lem-conti-DH}, $\chi_{\lambda_0}(f)$ and
 $\chi_{\lambda_0}(\tilde{f})$ are quasiconformally conjugate.
 Hence, by \cite[Lemma~9.2]{Inou-Kiwi-straightening}, there exists some
 $\hat{f}$ quasiconformally conjugate to $f$ such that
 $\chi_{\lambda_0}(\hat{f})=\chi_{\lambda_0}(\tilde{f})$.
 
 Therefore, $f = \hat{f}=\tilde{f}$ by assumption and Theorem~\ref{thm-inj}.
\end{proof}

The following proposition shows the continuity of
$\chi_{\lambda_0}^{-1}$ at Misiurewicz maps.
\begin{prop}
 \label{prop-Mis-proper}
 Let $\lambda_0$ be a post-critically finite rational lamination over
 $T_0$.
 Consider $\hat{f}, f_n \in \mathcal{R}(\lambda_0)$.
 Let $P=\chi_{\lambda_0}(\hat{f})$ and $P_n = \chi_{\lambda_0}(f_n)$.
 If $\hat{f}$ is Misiurewicz and $P_n \to P$, then $\lim_{n \to \infty}
 f_n = \hat{f}$.
\end{prop}

For the proof, we use the following proposition by Kiwi
\cite[Proposition~4.3]{Kiwi-combcont}:
\begin{prop}
 Consider a maximal $d$-invariant real lamination $\lambda$. Choose a
 finite set
 $F_1,\dots,F_k$ of $\lambda$-classes. Let $A={t_0,t_{q-1}}$ be a
 $\lambda$-class (subscripts respecting cyclic order and modulo $q$).
 Given $N>0$ and $\varepsilon>0$, there exist rational $\lambda$-classes
 $A_0,\dots,A_{q-1}$ such that for all $i$:
 \begin{enumerate}
  \item $A_i \subset (t_i,t_i+\varepsilon) \cup (t_{i+1}-\varepsilon,
	t_{i+1})$ and $A_i$ intersects both $(t_i,t_i+\varepsilon)$ and
	$(t_{i+1}-\varepsilon,t_{i+1})$.
  \item $A_i$ is disjoint from the grand orbit of $F_1 \cup \dots \cup
	F_k$ under $m_d$.
  \item $d^NA_i$ is not a periodic class.
 \end{enumerate}
\end{prop}
For a Misiurewicz map $f$, the equivalence classes of the \emph{real
extension} $\hat{\lambda}_f$ of $\lambda_f$ consists of
$\lambda_f$-classes and finite $\lambda_f$-unlinked classes, and
$\hat{\lambda}_f$ is \emph{maximal}, 
so we can apply this proposition for $\hat{\lambda}_f$.
(see \cite{Kiwi-combcont} for the
general definitions).

\begin{proof}[Proof of Proposition~\ref{prop-Mis-proper}]
 First recall that if an external ray $R_{\hat{f}}(\theta)$ land at a repelling
 periodic point, then the landing point of $R_f(\theta)$ moves
 continuously on $f$ close to $\hat{f}$ (see
 \cite[Lemma~B.1]{Goldberg-Milnor}). By taking inverse images, 
 this also holds for preperiodic eventually repelling point assuming it
 is not pre-critical.

 Therefore, for $\theta_1, \theta_2 \in \Q/\Z$ with $\theta_1
 \sim_{\lambda_f} \theta_2$, 
 if the common landing point of $R_{\hat{f}}(\theta_1)$ is not
 pre-critical,
 then 
 \[
  U_{(\theta_1,\theta_2)}=\{f \in \mathcal{C}(d);\ \theta_1
 \sim_{\lambda_f} \theta_2\}
 \]
 is a neighborhood of $\hat{f}$.
 Let $\lambda_{\hat{f}}^*$ be the set of
 $\lambda_{\hat{f}}$-equivalent pair $(\theta_1,\theta_2)$ such that the
 common landing point of $R_{\hat{f}}(\theta_1)$ and
 $R_{\hat{f}}(\theta_2)$ is not pre-critical.

 Then, now apply the previous proposition to $\hat{\lambda}_{\hat{f}}$,
 where the forbidden classes $F_1,\dots,F_k$ are the critical
 $\lambda_{\hat{f}}$-classes. Then it follows that 
 \[
  \bigcap_{(\theta_1,\theta_2) \in \lambda_{\hat{f}}^*}
 U_{(\theta_1,\theta_2)}
 = \{f \in \mathcal{C}(d);\ \lambda_f  \supset \lambda_{\hat{f}}\}
 = \{\hat{f}\}.
 \]
 In other words, $\{\}U_{(\theta_1,\theta_2)}\}$ is a neighborhood basis
 at $\hat{f}$.

 Now take any $(\theta_1,\theta_2) \in \lambda_{\hat{f}}^*$.
 If $\theta_1,\theta_2$ are $\lambda_0$-equivalent, then 
 $f_n \in U_{(\theta_1,\theta_2)}$ for any $n$.
 Otherwise, take the smallest $n\ge 0$ such that
 $(d^k\theta_1,d^n\theta_2)$ lies in $v \in |T(\lambda_0)|$ and let 
 $\eta_j = \alpha_v(d^k\theta_j)$.
 Then $\eta_1$ and $\eta_2$ are $\lambda_P$-equivalent, hence
 $\lambda_{P_n}$-equivalent for sufficiently large $n$.
 
 Since $\lambda_{f_n}$ is the combinatorial tuning of $\lambda_0$ and
 $\lambda_{P_n}$, it follows that $f_n \in U_{(\theta_1,\theta_2)}$.
 Therefore, we have $f_n \to \hat{f}$.
\end{proof}


\section{Parabolic bifurcation}
\label{sec-para-bif}

In this section and the next section,
we study perturbations in the connectedness locus
and see when a polynomial having a polynomial-like restriction satisfies
the perturbation condition in Theorem~\ref{thm-conti-mult} for
sufficiently many repelling periodic point.

In this section, we study parabolic bifurcations
and give a sufficient condition to have nice perturbations.
The successive section is devoted to the study of Misiurewicz bifurcations
to find parabolic polynomials satisfying this sufficient condition.

\begin{defn}
 We say a polynomial $f$ of degree $d \ge 3$ with connected Julia set
 satisfies \condI\ if the following hold;
 \begin{enumerate}
  \renewcommand{\theenumi}{(C1-\alph{enumi})}
  \item \label{condI-parab}
	0 is a non-degenerate $1$-parabolic periodic point of period
	$p$ for $f$;
  \item \label{condI-quad-like}
	there exists a quadratic-like restriction $f^p:V' \to V$ of
	$f^p$ containing $0$ hybrid equivalent to $z+z^2$;
  \item \label{condI-cp}
	let $\omega \in V'$ be the critical point of this quadratic-like
	restriction. 
	There exists another critical point $\omega'$ for $f$ and $N > 0$ 
	such that $f^n(\omega') \not \in K(f^p;V',V)$ for $n<N$ and
	$f^p(\omega)=f^N(\omega') \in K(f^p;V',V)$.
 \end{enumerate}
 We say $f$ satisfies \condII\ if it satisfies \condI\ and
 \begin{enumerate}
  \renewcommand{\theenumi}{(C2-\alph{enumi})}
  \item \label{condII-preperiodic}
	every critical point other than $\omega$ and $\omega'$ is
	preperiodic;
  \item \label{condII-primitive}
	the rational lamination $\lambda_f$ of $f$, which is
	post-critically finite by \ref{condII-preperiodic}, is primitive.
 \end{enumerate}
\end{defn}

\begin{rem}
 \label{rem-relax-preperiodicity}
 The condition \ref{condII-preperiodic} is just to obtain an analytic
 subset in the parameter space where a desired bifurcation occurs with
 keeping other dynamical properties.
 Therefore, for example, we can relax it to admit critical points in 
 bounded attracting basins.
 In this case, we can use the analytic dependence of the dynamics in the
 hyperbolic component \cite{Milnor-hyp} to get such an analytic subset.
\end{rem}

The following condition implies that we have nice perturbations to apply
Theorem~\ref{thm-conti-mult} (see the proof of Theorem~\ref{thm-discont}).
\begin{defn}
 Let $f$ satisfy \condI\ and let $\alpha$ be a repelling periodic point
 of $f$. 
 We say $f$ satisfy \condIII$_\alpha$ if
 there exists a convergent double sequence
 \[
  f_{n,m} \xrightarrow{m \to \infty} f_n \xrightarrow{n \to \infty} f
 \]
 in $\mathcal{C}(\lambda_f) (\subset \mathcal{C}(d))$ such that the
 following hold.
 Let us denote the continuations of critical points $\omega$ and
 $\omega'$ for $f_n$ and $f_{n,m}$ by $\omega_n$ and $\omega_{n,m}$
 respectively. 
 Similarly, let $\alpha_n$ and $\alpha_{n,m}$ be the continuations of
 the repelling periodic point $\alpha$ for $f_n$ and $f_{n,m}$
 (that is, we require that they do not bifurcate under these
 perturbations). 
 Let 
 \begin{align*}
  x_n&=f_n^p(\omega_n), & y_n&=f_n^N(\omega_n'), \\
  x_{n,m}&=f_{n,m}^p(\omega_{n,m}), & y_{n,m}&=f_{n,m}^N(\omega_{n,m}').
 \end{align*}
 (Recall that $\lim x_n = \lim y_n$ by \ref{condI-cp}.)
 \begin{enumerate}
  \renewcommand{\theenumi}{(C3-\alph{enumi})}
  \item \label{cond3-parab}
	0 is a periodic point of period $p$ for $f_n$ and $f_{n,m}$.
	It is non-degenerate and $1$-parabolic for $f_n$;
  \item \label{cond3-xy}
	$x_n \ne y_n$
	(hence $x_{n,m} \ne y_{n,m}$ for sufficiently large $m$).
  \item \label{cond3-crit-rel}
	the other critical orbit relations of $f$ are preserved for
	$f_{n,m}$ (hence also for $f_n$), i.e.,
	all critical points do not bifurcate under these perturbations
	and if $c, c' \in \crit(f) \setminus \{\omega,\omega'\}$
	(possibly $c=c'$) satisfy
	$f^k(c)=f^{k'}(c')$, then
	$f_{n,m}^k(c_{n,m})=f_{n,m}^{k'}(c_{n,m})$ for any $n,m$ where
	$c_{n,m}$ and $c_{n,m}'$ are the continuations of $c$ and $c'$
	respectively. 
  \item \label{cond3-quad-like}
	$f_{n,m}^p:V_{n,m}' \to V_{n,m}$ is a quadratic-like
	restrictions near $0$ and $\omega$, converging to
	a quadratic-like restriction $f_n^p:V_n' \to V_n$ locally
	uniformly, and it also converges to $f^p:V' \to V$ as $n \to
	\infty$.
	(Hence $f_n^p:V_n' \to V_n$ is hybrid equivalent to $z+z^2$.)
  \item \label{cond3-filled-julia}
	$x_n, y_n \in \Int K(f_n^p;V_n',V_n)$, and
	$x_{n,m}, y_{n,m} \in K(f_{n,m}^p;V_{n,m}',V_{n,m})$.
  \item \label{cond3-geom-conv}
	$f_{n,m}$ geometrically converges to $(f_n,g_n)$ as $m \to
	\infty$ such that $g_n(x_n)=\alpha_n$ and
	$g_n'(x_n) \ne 0$.
 \end{enumerate}
 We say $f$ satisfies \condIII\ if \condIII$_\alpha$ holds for any
 repelling periodic point $\alpha$ in $J(f^p;V',V)$.
\end{defn}
Note that we only consider $\alpha$ in the Julia set
$J(f^p;V',V)$ for the quadratic-like renormalization hybrid equivalent
to $z+z^2$ for \condIII.

\begin{rem}
 For simplicity, we say a polynomial satisfies \emph{\condI, \condII, or
 \condIII\ for a parabolic periodic point $x$}
 if a polynomial affinely conjugate to it by a conjugacy sending $x$ to
 $0$ does, because we mainly consider the space of monic centered
 polynomials.

 We can also define these conditions for a polynomial over a mapping
 schema in the same way. Observe that the mapping schema must have
 non-trivial Fatou critical relation in order to satisfy those
 conditions.
\end{rem}

Here, we prove the following.
\begin{thm}
 \label{thm-para-purturb}
 Let $\lambda_0$ be a post-critically finite
 $d$-invariant rational lamination with non-trivial Fatou critical relation.
 Assume $f \in \mathcal{R}(\lambda_0)$ satisfies \condII\
 and $\mathcal{R}(\lambda_f) \subset \mathcal{R}(\lambda_0)$.
 Then $f$ satisfies \condIII\ such that $f_n, f_{n,m} \in
 \mathcal{R}(\lambda_0)$ 
 and $\lambda_{f_n}=\lambda_{f}$ 
 for sufficiently large $n,m$.
 \end{thm}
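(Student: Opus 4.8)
The plan is to produce the double sequence $f_{n,m}\to f_n\to f$ satisfying \condIII\ by combinatorial means, exploiting the freedom that \condII\ gives us: since $\lambda_f=\lambda_0$ is post-critically finite and primitive, we have at our disposal the tuning machinery (Theorem~\ref{thm-pcf-tuning}, Theorem~\ref{thm-comb-tuning}), the compactness of $\mathcal{R}(\lambda_f)$ (Theorem~\ref{thm-cpt}), and the thickening lemma (Lemma~\ref{lem-thickening}), all of which let us perturb while staying inside $\mathcal{R}(\lambda_0)$. The idea is: first build the \emph{first level} $f_n$ as parabolic polynomials in $\mathcal{R}(\lambda_0)$ with $\lambda_{f_n}=\lambda_f$, in which the marked points $x_n=f_n^p(\omega_n)$ and $y_n=f_n^N(\omega_n')$ have been pulled apart (so \ref{cond3-xy} holds) while $0$ is still the non-degenerate $1$-parabolic point with the quadratic-like restriction $f_n^p:V_n'\to V_n$ hybrid equivalent to $z+z^2$ (so \ref{cond3-parab}, \ref{cond3-quad-like} hold) and $x_n,y_n$ lie in the interior of its filled Julia set (so \ref{cond3-filled-julia}, first half). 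Then for each fixed $n$, build the \emph{second level} $f_{n,m}$ as perturbations of $f_n$ that realize a parabolic implosion carrying the attracting-basin point $x_n$ onto the prescribed repelling periodic point $\alpha_n$ under a Lavaurs map $g_n$ with $g_n'(x_n)\ne 0$ (so \ref{cond3-geom-conv} holds), while staying in $\mathcal{R}(\lambda_0)$ and keeping all other critical relations of $f$ rigid (so \ref{cond3-crit-rel} holds). This last step is where the parabolic-bifurcation analysis of this section is used; I would expect it to be packaged as a separate lemma applied to each $f_n$.

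For the first level, the key point is that $f\in\mathcal{R}(\lambda_0)$ already has $0$ parabolic and the configuration of \condI; I want to perturb $f$ to $f_n$ inside $\mathcal{C}(d)$ keeping $\lambda_{f_n}=\lambda_f$ but moving $\omega'$ (equivalently $y=f^N(\omega')$) slightly off $x=f^p(\omega)$. Since $\lambda_f$ is primitive and post-critically finite, the equality $f^p(\omega)=f^N(\omega')$ imposed by \ref{condI-cp} is \emph{not} forced by $\lambda_f$: it is a Fatou critical relation that can be broken. Concretely, I would parametrize polynomials with the same rational lamination $\lambda_f$ and the same preperiodic critical relations \ref{condII-preperiodic}; this is a (positive-dimensional) algebraic set on which $\omega$ and $\omega'$ vary independently in the parabolic basin of $0$. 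Choosing $f_n$ in this set with $\omega_n'$ tending to the original $\omega'$ but with $f_n^N(\omega_n')\ne f_n^p(\omega_n)$, and with both points still deep inside $\operatorname{int}K(f_n^p;V_n',V_n)$, gives the first-level sequence. Membership $f_n\in\mathcal{R}(\lambda_0)$ follows from $\mathcal{R}(\lambda_f)\subset\mathcal{R}(\lambda_0)$ together with the fact that $f_n$ has rational lamination $\lambda_f$ and $\lambda_f$ is primitive, so $\mathcal{C}(\lambda_f)=\mathcal{R}(\lambda_f)$ by Theorem~\ref{thm-cpt}; and the quadratic-like restriction and its hybrid class persist under small perturbation by stability of the parabolic point and of external-ray landings.

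For the second level, fix $n$ and a repelling periodic point $\alpha\in K(f_n^p;V_n',V_n)$; this is where the parabolic-bifurcation theory of the present section enters. The point is that $x_n$ lies in the parabolic basin $B_0$ of $0$, on which the Lavaurs maps $g_{f_n,c}$ are defined and holomorphic; as $c$ ranges over $\C$, $g_{f_n,c}(x_n)$ ranges (up to finitely many bad $c$) over a nonempty open subset of the phase space, and in particular hits (preimages of) $\alpha$. Choosing $c$ with $g_{f_n,c}(x_n)=\alpha_n$ (say, a periodic point in the same cycle, adjusting $\alpha_n$ if needed) and $g_{f_n,c}'(x_n)\ne 0$—the derivative is generically nonzero since $\Phi_{f_n,\mathrm{attr}}$ is a local diffeomorphism near $x_n$ and $\Phi_{f_n,\mathrm{rep}}^{-1}$ is locally univalent—gives the target Lavaurs map $g_n$. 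Then I invoke the parabolic-implosion perturbation: there is a sequence $f_{n,m}\to f_n$ with $f_{n,m}\xrightarrow{\mathrm{geom}}(f_n,g_n)$; the perturbation can be chosen inside $\mathcal{C}(d)$ (the implosion directions are not obstructed in the full parameter space) and, crucially, inside $\mathcal{R}(\lambda_0)$. The latter is the delicate bookkeeping: I would control the rational lamination of $f_{n,m}$ (or rather a critical portrait) so that it still admits $\lambda_0$, again using primitivity of $\lambda_f$, the thickening Lemma~\ref{lem-thickening} to guarantee renormalizability, and the preservation of the preperiodic critical relations \ref{cond3-crit-rel} which can be imposed as an analytic sub-locus (as in Remark~\ref{rem-relax-preperiodicity}). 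Finally $g_n\to g$ for a Lavaurs map $g$ of $f$ with $g'(x)\ne 0$ follows by continuous dependence of Fatou coordinates and of the $f_n$ on $f$.

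\textbf{Main obstacle.} The genuinely hard part is keeping the \emph{second-level} perturbations $f_{n,m}$ inside $\mathcal{R}(\lambda_0)$: parabolic implosion is a discontinuous phenomenon for the Julia set, so one cannot argue by mere continuity that $\lambda_{f_{n,m}}\supset\lambda_0$. One must instead prescribe the combinatorics—a rational lamination or critical portrait—that $f_{n,m}$ should carry, realize it by a polynomial (Kiwi's realization theorem, Theorem~\ref{thm-cpt}/\ref{thm-pcf-tuning}), check that this realization also realizes the required analytic parabolic-bifurcation data, and then verify renormalizability via Lemma~\ref{lem-thickening}. Threading the purely combinatorial construction through the analytic implosion construction, so that both the geometric limit $g_n(x_n)=\alpha_n$ and the membership in $\mathcal{R}(\lambda_0)$ hold simultaneously, is the crux; everything else is stability of hyperbolic/parabolic local data under small perturbations.
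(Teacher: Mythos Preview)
Your diagnosis of the difficulty is exactly right, but your resolution is incomplete, and the paper's actual proof takes a route you do not anticipate. The missing idea is this: rather than constructing $f_n$ directly and then trying to realize a prescribed Lavaurs phase for $f_{n,m}$ while staying in $\mathcal{R}(\lambda_0)$, the paper works \emph{entirely in the target space} $\mathcal{C}(T(\lambda_f))$, where $T(\lambda_f)\cong T_{\capt,d_1+1}$, and pulls everything back by tuning.

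Concretely: fix the repelling periodic point $\alpha\in K(f^p;V',V)$, let $\theta$ be its landing angle for the quadratic $Q(z)=z^2+1/4$, and let $c_m$ be the landing point of the \emph{parameter} ray $\mathcal{R}_{\mathcal{M}}(\theta/2^m)$. Then $Q_m(z)=z^2+c_m$ is Misiurewicz with $Q_m^{m+1}(0)=\alpha(Q_m)$, and a direct computation (using local connectivity of $\mathcal{M}$ at $1/4$ and continuity of Fatou coordinates) shows $Q_m\xrightarrow{\geom}(Q,g_Q)$ with $g_Q(Q(0))=\alpha(Q)$ and $g_Q'(Q(0))\ne 0$. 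Next pick $\theta_n\to\theta$ and let $y_n(Q_m)$ be the landing point of $R_{Q_m}(\theta_n/2^m)$; this furnishes the second marked point. Package these into explicit \emph{Misiurewicz} maps $\tilde{Q}_{n,m}\in\mathcal{C}(T(\lambda_f))$ over the capture schema. Because $\lambda_f$ is primitive, Theorem~\ref{thm-pcf-tuning} produces $f_{n,m}=\chi_{\lambda_f}^{-1}(\tilde{Q}_{n,m})\in\mathcal{R}(\lambda_f)\subset\mathcal{R}(\lambda_0)$, and compactness of $\mathcal{R}(\lambda_f)$ (Theorem~\ref{thm-cpt}) lets you extract limits $f_{n,m}\to f_n\to \hat f$. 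The first-level maps $f_n$ are thus \emph{not} constructed directly as you propose; they arise as limits, and one then checks $\lambda_{f_n}=\lambda_f$ and $\hat f=f$ (the latter via injectivity of $\chi_{\lambda_f}$ and qc-rigidity of $\tilde Q$). Finally, the geometric convergence $f_{n,m}^p\xrightarrow{\geom}(f_n^p,g_n)$ is inherited from the quadratic model by conjugating through the (uniformly quasiconformal) hybrid conjugacies $\psi_{n,m}\to\varphi_n$.

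The point is that your ``main obstacle''---simultaneously achieving the analytic implosion data and membership in $\mathcal{R}(\lambda_0)$---dissolves once you move to the quadratic model: there the combinatorics (external angles $\theta/2^m$) and the analytic implosion (Lemma on $Q_m\xrightarrow{\geom}(Q,g_Q)$) are visible at once, the $\tilde Q_{n,m}$ are Misiurewicz so tuning applies, and primitivity of $\lambda_f$ makes $\mathcal{R}(\lambda_f)=\mathcal{C}(\lambda_f)$ compact so limits stay renormalizable. Your direct approach---pick a Lavaurs phase $c$ with $g_{f_n,c}(x_n)=\alpha_n$ and then ``invoke the parabolic-implosion perturbation''---does not by itself give any control on $\lambda_{f_{n,m}}$, and the vague appeal to ``prescribe the combinatorics and realize it'' is precisely the step that the quadratic parameter-ray construction makes explicit.
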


The rest of this section is devoted to prove this theorem.
We first study the bifurcation of a quadratic polynomial
$Q(z)=z^2+1/4 \in \poly(2)$, which is affinely conjugate to $z+z^2$.

Consider a repelling periodic point $\alpha(Q)$ of $Q$ and let $\theta$
be the landing angle for $\alpha(Q)$.
Let $c_m$ be the landing point of the parameter ray
$\mathcal{R_M}(\theta/2^m)$ for the Mandelbrot set and let
$Q_m(z)=z^2+c_m$. 
Let $\alpha(Q_m)$ be the landing point of the
external ray $R_{Q_m}(\theta)$, which is the repelling periodic point
and $\alpha(Q_m) \to \alpha(Q)$ as $m \to \infty$.
The critical point $0$ is preperiodic under $Q_m$
because $c_m=Q_m(0)$ is the landing point of $R_{Q_m}(\theta/2^m)$
\cite{Douady-Hubbard-etude}, it follows that $Q_m^{m+1}(0) =
\alpha(Q_m)$. Hence $Q_m$ is Misiurewicz.

\begin{lem}
 \label{lem-Q-conv}
 There exists some Lavaurs map $g_Q$ such that $Q_m \xrightarrow{\geom}
 (Q,g_Q)$ with $g_Q(Q(0))=\alpha(Q)$ and $g_Q'(Q(0)) \ne 0$.
\end{lem}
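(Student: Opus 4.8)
Looking at Lemma~\ref{lem-Q-conv}, I need to prove that the Misiurewicz quadratic polynomials $Q_m(z)=z^2+c_m$, where $c_m$ lands the parameter ray at angle $\theta/2^m$, converge geometrically to $(Q, g_Q)$ for some Lavaurs map with $g_Q(Q(0))=\alpha(Q)$ and $g_Q'(Q(0))\neq 0$.

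\textbf{Plan of proof.}
The strategy is to use the standard theory of parabolic implosion for the quadratic family near $c=1/4$, recalled in Section~\ref{sec-para-bif}, and identify the phase of the limiting Lavaurs map by tracking where the critical value goes. First I would recall that $Q$ has a non-degenerate $1$-parabolic fixed point (after affine conjugacy $Q\sim z+z^2$), so Fatou coordinates $\Phi_{Q,\attr}$, $\Phi_{Q,\rep}$ exist, with $\Phi_{Q,\attr}$ extending over the whole parabolic basin $B_0$ (which contains $Q(0)$, the critical value) and $\Psi_{Q,\rep}=\Phi_{Q,\rep}^{-1}$ extending to all of $\C$. Since $c_m\to 1/4$ along the parameter ray at angle $\theta/2^m$, the maps $Q_m$ converge to $Q$ and the multiplier at the fixed point approaches $1$ non-tangentially (the parameter rays land at $1/4$ with the right asymptotics; this is where $|\arg\alpha|<\pi/4$ is used). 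So the implosion picture applies: there are normalizing constants $c_n, C_n$ with $\Phi_{Q_m}+c_m\to\Phi_{Q,\attr}$ on $D_{Q,\attr}$ and $\Phi_{Q_m}+C_m\to\Phi_{Q,\rep}$ on $D_{Q,\rep}$.

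\textbf{Identifying the phase.}
The key computation is to determine $c_m-C_m\pmod{\Z}$ in the limit. The essential combinatorial input is that $Q_m$ is Misiurewicz with $Q_m^{m+1}(0)=\alpha(Q_m)$, where $\alpha(Q_m)$ is the landing point of $R_{Q_m}(\theta)$; equivalently, the critical value $c_m$ is the landing point of $R_{Q_m}(\theta/2^m)$. I would translate this into Fatou coordinates: the critical value $Q(0)$ lies in the attracting petal after finitely many iterates (in fact $Q(0)=1/4$ is in $D_{Q,\attr}$ after conjugation, being on the boundary of the basin... more precisely its forward orbit enters the attracting petal), and $\alpha(Q)$ — the landing point of $R_Q(\theta)$ — is a repelling periodic point whose preimage under $\Psi_{Q,\rep}$ I can compute. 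The number of iterates $k_m$ needed for $Q_m^{k_m}(c_m)$ to reach near $\alpha(Q_m)$ is governed by $m$ plus a bounded correction, and one shows $k_m-c_m+C_m$ converges to a finite phase $c$. This is exactly the kind of argument in Douady's work on parabolic implosion (cited as \cite{Douady-Julia-set}) and in Douady--Sentenac--Zinsmeister; I would cite these rather than redo the bookkeeping. The conclusion $g_Q(Q(0))=\alpha(Q)$ then follows because the geometric limit of $Q_m^{k_m}$ sends the critical value to the chosen repelling point, by continuity of the landing points of the rays $R_{Q_m}(\theta)$ and $R_{Q_m}(\theta/2^m)$.

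\textbf{Non-degeneracy of the derivative.}
Finally, $g_Q'(Q(0))\neq 0$: since $g_{Q,c}=\Psi_{Q,\rep}\circ(\Phi_{Q,\attr}+c)$ is a composition of a conformal map $\Phi_{Q,\attr}$ (which is locally injective on $B_0$, hence nonzero derivative at $Q(0)$, assuming $Q(0)$ is not a critical point of the extended attracting coordinate — but the critical points of the extended $\Phi_{Q,\attr}$ are precisely the iterated preimages of the critical point $0$, and $Q(0)$ is not among them since the critical orbit $0\mapsto Q(0)\mapsto\cdots$ never returns to $0$), a translation, and $\Psi_{Q,\rep}$ (which is a local diffeomorphism away from the critical points of the extended repelling coordinate, and $\alpha(Q)$ being a repelling periodic point is not such a point), the derivative $g_Q'(Q(0))$ is a product of nonzero terms.

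\textbf{Main obstacle.}
The hard part will be the explicit identification of the limiting phase $c$, i.e.\ showing that the iterate counts $k_m$ have the right asymptotics so that $k_m-c_m+C_m$ converges and the resulting Lavaurs map sends $Q(0)$ to $\alpha(Q)$ rather than to some other point in the grand orbit. This requires carefully combining (a) the asymptotic location $c_m\approx 1/4 + (\text{const})\cdot 4^{-m}$ of the Misiurewicz parameters along the parameter ray, (b) the asymptotics of the Fatou coordinate normalizations $c_m, C_m$ (each growing like a constant times $1/\alpha_m \sim m\log 2$ up to bounded error, via the standard $\sim -1/(2\pi i\alpha)$ asymptotics), and (c) the fact that $\theta$ is fixed while $\theta/2^m$ is the critical value angle. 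I expect this to be essentially an application of known parabolic-implosion estimates from the cited references, so I would present it as such rather than reproving it from scratch, emphasizing only the bookkeeping specific to this family.
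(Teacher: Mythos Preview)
Your overall strategy matches the paper's, but you are making the step you flag as the ``main obstacle'' considerably harder than it needs to be. You propose to extract the limiting phase by computing explicit asymptotics of the Misiurewicz parameters $c_m$ and of the normalizing constants; the paper avoids this entirely. The trick is that the dynamical relation $Q_m^{m+1}(0)=\alpha(Q_m)$ already anchors \emph{both ends} of the Fatou-coordinate transition at convergent points: for any fixed $k$, the point $Q_m^{m-k}(c_m)$ is precisely the landing point of $R_{Q_m}(\theta/2^k)$, which for $k$ large enough lies in the repelling petal and converges as $m\to\infty$ to the landing point of $R_Q(\theta/2^k)$. Writing
\[
 Q_m^m(z)=Q_m^k\circ\Phi_{Q_m}^{-1}\bigl(\Phi_{Q_m}(z)+(m-k)\bigr),
\]
one then checks that each factor is univalent on a domain of definite size (the only critical points of the extended $\Phi_{Q_m}$ lie in the backward orbit of $0$, and $Q_m^k$ is univalent near the landing point of $R_{Q_m}(\theta/2^k)$) and converges by continuity of Fatou coordinates. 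The composition therefore converges to a Lavaurs map univalent near $Q(0)=1/4$, and its value at $Q(0)$ is forced to be $\alpha(Q)=\lim\alpha(Q_m)$. No asymptotics for $c_m$ or for the phase are ever computed.

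Your argument for $g_Q'(Q(0))\ne 0$ via the critical points of the extended attracting and repelling coordinates is correct and equivalent to the paper's; the paper just runs the univalence argument at the level of $Q_m$ before taking the limit, which also delivers $g_Q(Q(0))=\alpha(Q)$ and $g_Q'(Q(0))\ne 0$ simultaneously. One minor point of language: the paper states (and uses) that $c_m\to 1/4$ \emph{tangentially to the positive real axis} in the $c$-plane, invoking local connectivity of $\mathcal{M}$ at $1/4$ for the convergence itself; this is what yields the sector condition $|\arg\alpha|<\pi/4$ needed for the perturbed Fatou coordinate to exist.
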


\begin{proof}
 Since the Mandelbrot set is locally connected at $1/4$
 \cite{Hubbard-Yoccoz}, $c_m \to 1/4$ as $m \to \infty$.
 Furthermore, this convergence is tangential to the positive real axis,
 hence it follows that a Fatou coordinate $\Phi_{Q_m}$ is defined for
 sufficiently large $m>0$.

 By the continuity of Fatou coordinates, 
 there exists some $k>0$ such that the landing point of
 $R_{Q_m}(\theta/2^k)$ is contained in the domain
 of definition of $\Phi_{Q_m}$ for sufficiently large $m>0$.
 We may also assume $c_m$ is also contained in the domain of definition
 of $\Phi_{Q_m}$
 because we can extend $\Phi_{Q_m}$ by the functional equation
 $\Phi_{Q_m}(Q_m(z))=\Phi_{Q_m}(z)+1$. 
 Since $\Phi_{Q_m}$ has a critical point only at the backward orbit of
 the critical point, $\Phi_{Q_m}$ is univalent on an open set containing
 $1/4$ and $c_m(\approx 1/4)$ of a definite size.
 Hence $Q_m^m(c_m)=Q_m^k\circ \Phi_{Q_m}^{-1}( \Phi_{Q_m}(z)+m-k)$
 is well-defined and univalent near $z=c_m$ because $Q_m^k$ is univalent
 on a neighborhood of $\Phi_{Q_m}^{-1}(\Phi_{Q_m}(z)+m-k)$, which is the
 landing point of $R_{Q_m}(\theta/2^k)$, of a definite size.
 
 Therefore, $Q_m^n(z) = Q_m^k \circ \Phi_{Q_m}^{-1}(\Phi_{Q_m}(z)+m-k)$
 converges to a Lavaurs map $g_Q$ as $m \to \infty$, which is univalent
 near $1/4$. 
\end{proof}

Take a sequence $\{\theta_n\} \subset \Q/\Z$ such that $\theta_n
\to \theta$,
and let $\alpha_n(Q_m)$ be the landing point of the
external ray $R_{Q_m}(\theta_n)$.
Since $Q_m$ is Misiurewicz, $J(Q_m)$ is locally connected, hence
$\alpha_n(Q_m) \to \alpha(Q_m)$ as $n \to \infty$.
Let $y_n(Q_m)$ be the landing point of $R_{Q_m}(\theta_n/2^m)$.
\begin{figure}[th]
 \fbox{\includegraphics[width=6cm]{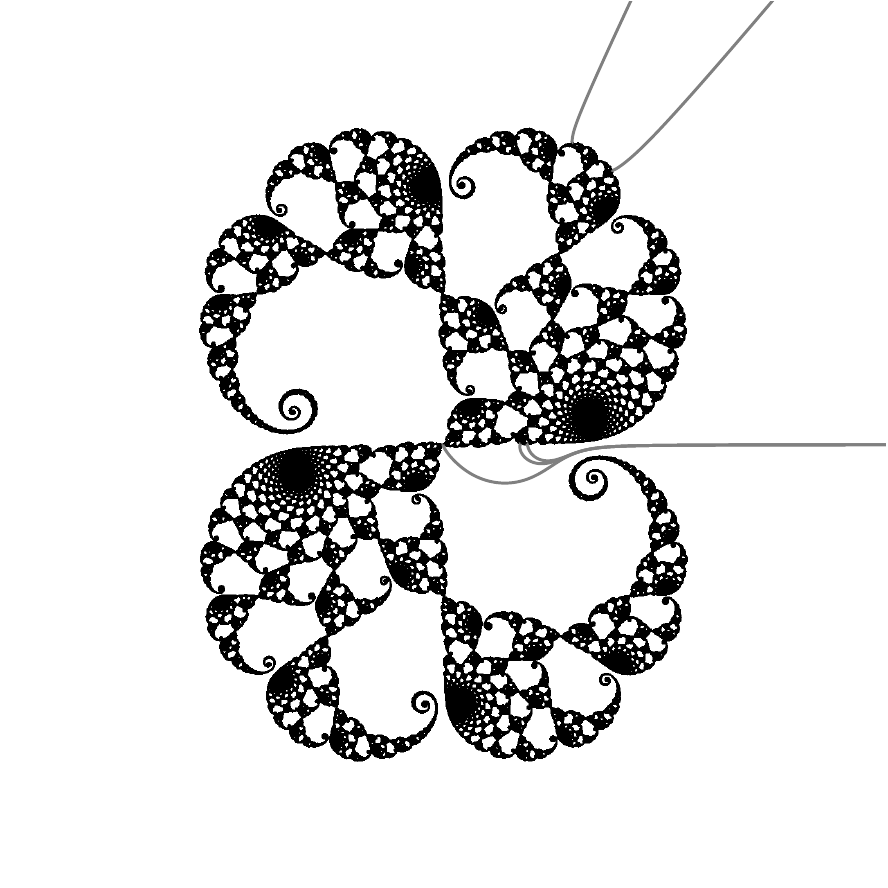}}
 \fbox{\includegraphics[width=6cm]{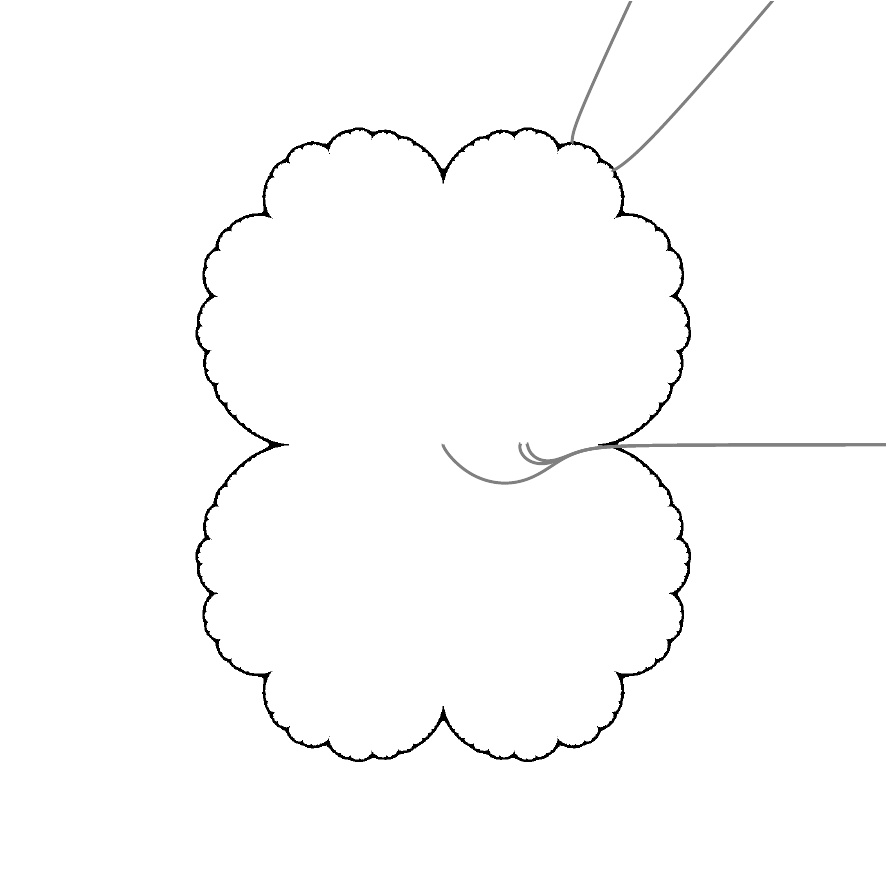}}
 \caption{The external rays of angle $\theta$, $\theta_n$, $\theta/2^m$,
 $\theta_n/2^m$ and $\theta/2^{m+1}$ for $Q_m$ and their limits.
 By construction, $R_{Q_m}(\theta)$ and $R_{Q_m}(\theta_n)$ land at
 repelling (pre)periodic points $\alpha(Q_m)$ and $\alpha_n(Q_m)$
 respectively, and $R_{Q_m}(\theta/2^m)$ and $R_{Q_m}(\theta/2^{m+1})$
 land at the critical value and critical point respectively.}
 \label{fig-Q}
\end{figure}

\begin{lem}
 The limit 
 \[
  y_n(Q)=\lim_{m \to \infty} y_n(Q_m)
 \]
 exists and
 \[
  \lim_{n \to \infty} y_n(Q) = 1/4. 
 \]
\end{lem}

See Figure~\ref{fig-Q}. 
Since the critical value $Q_m(0)$ is the landing point of
$R_{Q_m}(\theta/2^m)$, we have 
\[
 \lim_{m \to \infty} \lim_{n \to \infty} y_n(Q_m)
 = \lim_{m \to \infty} Q_m(0) = Q(0)=1/4. 
\]
However, one cannot change the order of limits in general,
because the Julia set does not move continuously at parabolic maps.

\begin{proof}
 As we saw in the previous lemma, we have $Q_m^m \longrightarrow g_Q$
 near $1/4$ and $g_Q(1/4) = \alpha(Q)$.
 Since $g_Q'(1/4) \ne 0$, there exists an inverse branch $h$ defined
 near $\alpha(Q)$ with $h(\alpha(Q))=1/4$.
 
 Therefore, for sufficiently large $m$, there exists an inverse branch $h_m$
 of $Q_m^m$ defined near $\alpha(Q)$ which satisfies $h_m(\alpha(Q_m)) =
 1/4$. 

 If $n$ is sufficiently large, $\alpha_n(Q_m)$ is close to
 $\alpha(Q_m)$, hence by construction, $y_n(Q_m) = h_m(\alpha_n(Q_m))$.
 Therefore, $y_n(Q) = h(\alpha_n(Q))$ where $\alpha_n(Q)$ is the landing
 point of $R_Q(\theta_n)$. 

 Since $J(Q)$ is locally connected, $\alpha_n(Q) \to \alpha(Q)$ as $n
 \to \infty$, thus $y_n(Q) = h(\alpha_n(Q)) \to h(\alpha(Q)) = 1/4$.
\end{proof}

Let $f$ satisfy the assumption of the Theorem~\ref{thm-para-purturb}.
By assumption, the reduced mapping schema 
$T(\lambda_f)=(|T(\lambda_f)|,\sigma,\delta)$ is equal to
$T_{\capt,d_1+1}$, the schema of capture type of total degree $d_1+1$.

Define a polynomial $\tilde{Q}_{n,m}$ over $T(\lambda_f)$ as follows:
\[
 \tilde{Q}_{n,m}(v_i,z)=
 \begin{cases}
  (v_0, Q_m(z)) & \mbox{if }i=0, \\
  (v_0, z^{d_1}+y_n(Q_m)) & \mbox{if }i=1.
 \end{cases}
\]
Then $\tilde{Q}_n=\lim_{m \to \infty} \tilde{Q}_{n,m}$
and $\tilde{Q}=\lim_{n \to \infty} \tilde{Q}_n$ exist and satisfy the
following:
\begin{align*}
 \tilde{Q}_n(v_i,z) &= 
 \begin{cases}
  (v_0,Q(z))& \mbox{if }i=0, \\
  (v_0, z^{d_1}+y_n(Q)) & \mbox{if }i=1,
 \end{cases} \\
 \tilde{Q}(v_i,z) &= 
 \begin{cases}
  (v_0,Q(z))& \mbox{if }i=0, \\
  (v_0, z^{d_1} + \frac{1}{4}) & \mbox{if }i=1.
 \end{cases}
\end{align*}

Since $\tilde{Q}_{n,m}$ is Misiurewicz and $\lambda_f$ is primitive,
there exists $f_{n,m} \in \mathcal{R}(\lambda_f)=\mathcal{C}(\lambda_f)$ such
that $\chi_{\lambda_f}(f_{n,m})=\tilde{Q}_{n,m}$ by
Theorem~\ref{thm-pcf-tuning}.

By taking a subsequence, we may assume
\[
 f_{n,m} \xrightarrow{m \to \infty} f_n \xrightarrow{n \to \infty}
 \hat{f}
\]
for some $f_n$ and $\hat{f}$.
Since $\mathcal{C}(\lambda_f)=\mathcal{R}(\lambda_f)$ is compact by
Theorem~\ref{thm-cpt},
$f_n$ and $\hat{f}$ are also $\lambda_f$-renormalizable.

Let $(f_n^{\ell_i}:U_{n,v_i}' \to U_{n,v_0})_{i=0,1}$ and
$(\hat{f}^{\ell_i}:\hat{U}_{v_i}' \to \hat{U}_{v_0})_{i=0,1}$ be
$\lambda_f$-renormalizations of $f_n$ and $\hat{f}$.
Since the straightening map is continuous for quadratic-like families,
the quadratic-like restrictions $f_n^{\ell_0}:U_{n,v_0}' \to U_{n,v_0}$
and $\hat{f}^{\ell_0}:\hat{U}_{v_0}' \to \hat{U}_{v_0}$ are hybrid
equivalent to $Q(z)=z^2+1/4$.
Let $\omega_n$ and $\omega_n'$ (resp.\ $\hat{\omega}$ and
$\hat{\omega}'$) be the critical points for $f_n$ (resp.\ $\hat{f}$)
lying in $U_{n,v_0}'$ and $U_{n,v_1}'$ (resp.\ $\hat{U}_{v_0}'$ and
$\hat{U}_{v_1}'$) respectively.
Note that $\ell_0=p$ and $\ell_1=N$.

\begin{lem}
 \label{lem-crit}
 \begin{enumerate}
  \item \label{item-crit-interior}$ f_n^N(\omega_n') \in \Int
	K_{f_n}(v_0)$ for sufficiently large $n$.
  \item \label{item-crit-rel}
	$\hat{f}^N(\hat{\omega}') = \hat{f}^p(\hat{\omega})$.
 \end{enumerate}
\end{lem}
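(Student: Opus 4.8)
The plan is to push both assertions through the straightening $\chi_{\lambda_f}$ to the model polynomials $\tilde Q_n$ and $\tilde Q$ over the capture schema $T_{\capt,d_1+1}$, where they become transparent, controlling the two limits ($m\to\infty$, then $n\to\infty$) by the Douady--Hubbard quasiconformal argument of Lemma~\ref{lem-conti-DH}. First I would record the conjugacies: applying Lemma~\ref{lem-conti-DH} to $f_{n,m}\xrightarrow{m\to\infty}f_n$ with $\chi_{\lambda_f}(f_{n,m})=\tilde Q_{n,m}\to\tilde Q_n$ yields, after a subsequence in $m$, a quasiconformal hybrid conjugacy $\psi_n$ between the $\lambda_f$-renormalization of $f_n$ and $\tilde Q_n$, arising as a locally uniform limit of quasiconformal hybrid conjugacies $\psi_{n,m}$ between the renormalization of $f_{n,m}$ and $\tilde Q_{n,m}$ with a uniform dilatation bound; then, after a further subsequence making $\chi_{\lambda_f}(f_n)$ converge (compactness of $\mathcal C(T(\lambda_f))$) and applying Lemma~\ref{lem-conti-DH} to $f_n\to\hat f$, I get that the renormalization of $\hat f$ is quasiconformally equivalent to $\lim_n\chi_{\lambda_f}(f_n)$, which is in turn quasiconformally equivalent to $\tilde Q=\lim_n\tilde Q_n$; fix a quasiconformal conjugacy $\phi$ between the renormalization of $\hat f$ and $\tilde Q$. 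Since the identity is the only schema automorphism of $T_{\capt}$, all these conjugacies are fiber preserving, and they carry critical points to critical points, so $\psi_{n,m}(\omega_{n,m}')=(v_1,0)$, $\psi_n(\omega_n')=(v_1,0)$, $\phi(\hat\omega)=(v_0,0)$ and $\phi(\hat\omega')=(v_1,0)$.

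For (i): because $f_{n,m}$ is Misiurewicz, $\omega_{n,m}'$ is strictly preperiodic, hence lies in the filled Julia set of the renormalization of $f_{n,m}$; this property persists to the limit, so $\omega_n'\in K_{f_n}(v_1)$ and therefore $f_n^N(\omega_n')=f_n^{\ell_1}(\omega_n')\in K_{f_n}(v_0)$, which in particular places this point in the domain of $\psi_n$. From the conjugacy relation and $\psi_{n,m}(\omega_{n,m}')=(v_1,0)$ I get $\psi_{n,m}(f_{n,m}^N(\omega_{n,m}'))=(v_0,0^{d_1}+y_n(Q_m))=(v_0,y_n(Q_m))$, and letting $m\to\infty$ gives $\psi_n(f_n^N(\omega_n'))=(v_0,y_n(Q))$. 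On the fiber $v_0$ the map $\psi_n$ is a homeomorphism of a neighborhood of $K_{f_n}(v_0)$ onto a neighborhood of $K(Q)$ sending $K_{f_n}(v_0)$ onto $K(Q)$, hence interior onto interior, so $f_n^N(\omega_n')\in\Int K_{f_n}(v_0)$ exactly when $y_n(Q)\in\Int K(Q)$. Finally $y_n(Q)\to Q(0)=\tfrac14$ as $n\to\infty$ (read off from $\tilde Q=\lim_n\tilde Q_n$), and $Q(0)$ lies in the open immediate parabolic basin of $Q=z^2+\tfrac14$, at positive distance from $J(Q)$; hence $y_n(Q)\in\Int K(Q)$, and thus $f_n^N(\omega_n')\in\Int K_{f_n}(v_0)$, for all sufficiently large $n$.

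For (ii) I would simply apply $\phi$. Since $\ell_0=p$, $\ell_1=N$ and $\phi$ conjugates the $v_i$-piece of the renormalization of $\hat f$ to the $v_i$-piece of $\tilde Q$, we have $\phi(\hat f^N(\hat\omega'))=\tilde Q(v_1,0)=(v_0,0^{d_1}+\tfrac14)=(v_0,\tfrac14)$ and $\phi(\hat f^p(\hat\omega))=\tilde Q(v_0,0)=(v_0,Q(0))=(v_0,\tfrac14)$. Both $\hat f^N(\hat\omega')$ and $\hat f^p(\hat\omega)$ lie in the filled Julia set of the renormalization of $\hat f$ — their $\phi$-images lie in $K(\tilde Q)$ because $0\in K(Q)$ — hence both lie in the domain of the injective map $\phi$, so $\hat f^N(\hat\omega')=\hat f^p(\hat\omega)$.

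The main obstacle I expect is the bookkeeping behind Lemma~\ref{lem-conti-DH}: one must choose the renormalization domains of the $f_{n,m}$ (and $f_n$) so that they, and the hybrid conjugacies, converge on a fixed neighborhood of the fiber $v_0$, and one must work only with a quasiconformal conjugacy rather than the equality $\chi_{\lambda_f}(f_n)=\tilde Q_n$, which in general fails — this is precisely the discontinuity of $\chi_{\lambda_f}$ that the paper is establishing. Granting that, statement (i) is just the assertion that parabolic implosion carries the Julia-set points $y_n(Q_m)$ into the parabolic basin in the limit, and (ii) merely records the critical coincidence $0^{d_1}+\tfrac14=Q(0)$ already present in the limiting model $\tilde Q$.
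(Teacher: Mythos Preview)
Your argument is correct and follows essentially the same route as the paper: pass the hybrid conjugacies $\psi_{n,m}$ to limits via Lemma~\ref{lem-conti-DH}, and read off both assertions from the explicit model maps $\tilde Q_n$ and $\tilde Q$ over $T_{\capt,d_1+1}$, where the critical coincidence $\tilde Q(v_1,0)=\tilde Q(v_0,0)=(v_0,\tfrac14)$ is built in. The paper's presentation is only organizationally different: it takes a single diagonal limit $\psi_{n,m}\to\varphi_n\to\hat\varphi$ (rather than your detour through $\lim_n\chi_{\lambda_f}(f_n)$, which is unnecessary), proves \ref{item-crit-rel} first by applying $\hat\varphi$, and then obtains \ref{item-crit-interior} by continuity from the fact that $K(\tilde Q_n,v_0)=K(Q)$ is independent of $n$; your direct computation $\psi_n(f_n^N(\omega_n'))=(v_0,y_n(Q))\to(v_0,\tfrac14)\in\Int K(Q)$ achieves the same thing. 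One small slip: the limit $\psi_n$ of the hybrid conjugacies $\psi_{n,m}$ is only a \emph{quasiconformal} conjugacy, not a hybrid one (as you yourself note in the final paragraph), so you should drop the word ``hybrid'' when describing $\psi_n$.
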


\begin{proof}
 Since $\mathcal{R}(\lambda_f)$ is compact, we may assume that 
 the hybrid conjugacy $\psi_{n,m}$ between
 $\lambda_f$-renormalization of $f_{n,m}$ and
 $\tilde{Q}_{n,m}=\chi_{\lambda_f}(f_{n,m})$ are uniformly
 $K$-quasiconformal by Lemma~\ref{lem-conti-DH}.
 By passing to a subsequence, we may further assume that 
 \[
  \psi_{n,m} \xrightarrow{m \to \infty} \varphi_n 
  \xrightarrow{n \to \infty} \hat{\varphi}
 \]
 and they are all $K$-quasiconformal.
 Then $\varphi_n$ conjugates the $\lambda_n$-renormalization of $f_n$
 to $\tilde{Q}_n$ and $\hat{\varphi}$
 conjugates that of $\hat{f}$ to $\tilde{Q}$.
 Hence it follows that 
 \begin{align*}
  \hat{\varphi}(\hat{f}^N(\hat{\omega}'))
  &= \tilde{Q}(\hat{\varphi}(\hat{\omega}')) 
  = (v_0,1/4) = \hat{\varphi}(\hat{f}^p(\hat{\omega})),
 \end{align*}
 so we have \ref{item-crit-rel} and $\hat{f}^N(\hat{\omega}') \in \Int
 K_{\hat{f}}(v_0)$. Therefore, \ref{item-crit-interior} also follows by
 continuity.
 Note that $K(\tilde{Q}_n,v_0)=K(Q)$ does not depend on $n$.
\end{proof}

\begin{lem}
 \label{lem-lambda-f_n}
 $\lambda_{f_n}=\lambda_{\hat{f}}=\lambda_{f}$ for sufficiently large $n$.
\end{lem}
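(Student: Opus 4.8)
The plan is to show that the straightening $\chi_{\lambda_f}(f_n)$, and that of the limit map $\hat{f}=\lim_n f_n$, has trivial rational lamination, and then to read off $\lambda_{f_n}=\lambda_f$ from combinatorial tuning. First I would record the easy inclusion: since $\lambda_f$ is primitive by \ref{condII-primitive}, Theorem~\ref{thm-cpt} gives $\mathcal C(\lambda_f)=\mathcal R(\lambda_f)$ and this set is compact, so $f_n,\hat{f}\in\mathcal C(\lambda_f)$ and hence $\lambda_{f_n}\supseteq\lambda_f$ and $\lambda_{\hat{f}}\supseteq\lambda_f$. All the work is in the reverse inclusion.

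Next I would identify the Julia sets of $\tilde{Q}_n$ and of $\tilde{Q}=\lim_n\tilde{Q}_n$. Over $v_0$ they equal $J(Q)$ with $Q(z)=z^2+\tfrac14$, which is a Jordan curve (the cauliflower; see \cite{Douady-Hubbard-etude}). Over $v_1$ the map $\tilde{Q}_n$ is $z\mapsto z^{d_1}+y_n(Q)$ followed by $Q$, so $K(\tilde{Q}_n,v_1)=g_n^{-1}(K(Q))$ with $g_n(z)=z^{d_1}+y_n(Q)$, and similarly $K(\tilde{Q},v_1)=g_\infty^{-1}(K(Q))$ with $g_\infty(z)=z^{d_1}+\tfrac14$. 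The decisive point is that the unique critical value $y_n(Q)$ of $g_n$ lies in the \emph{interior} of $K(Q)$ — in fact in the parabolic basin of $Q$, consistently with $g_Q(y_n(Q))=\alpha_n(Q)$ for the Lavaurs map $g_Q$ of Lemma~\ref{lem-Q-conv} — by the same Fatou-coordinate estimates used to prove that lemma; likewise $\tfrac14=Q(0)$ lies in that basin. Granting this, $g_n^{-1}(K(Q))$ is a full continuum, because its complement $g_n^{-1}(\overline{\C}\setminus K(Q))$ is a connected, simply connected domain (a $d_1$-fold branched cover of the simply connected domain $\overline{\C}\setminus K(Q)$, branched only over $\infty$); hence its boundary $g_n^{-1}(J(Q))=\partial K(\tilde{Q}_n,v_1)$ is a Jordan curve. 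Thus $J(\tilde{Q}_n)$ and $J(\tilde{Q})$ are fiberwise Jordan curves.

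Then I would transport this across the straightening. By Lemma~\ref{lem-conti-DH} applied to $f_{n,m}\xrightarrow{m\to\infty}f_n$, the map $\chi_{\lambda_f}(f_n)$ is quasiconformally conjugate to $\tilde{Q}_n=\lim_m\chi_{\lambda_f}(f_{n,m})$ with a uniform dilatation bound; passing to a subsequence so that $\chi_{\lambda_f}(f_n)$ converges and applying Lemma~\ref{lem-conti-DH} again to $f_n\to\hat{f}$ gives that $\chi_{\lambda_f}(\hat{f})$ is quasiconformally conjugate to $\tilde{Q}$. A quasiconformal conjugacy carries Julia sets to Julia sets fiberwise homeomorphically, so $J(\chi_{\lambda_f}(f_n))$ and $J(\chi_{\lambda_f}(\hat{f}))$ are fiberwise Jordan curves; a Jordan curve has no cut point, so no two distinct external rays can land together, and therefore $\lambda_{\chi_{\lambda_f}(f_n)}$ and $\lambda_{\chi_{\lambda_f}(\hat{f})}$ are trivial. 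Finally, by Theorem~\ref{thm-comb-tuning}(iv) the combinatorial straightening of $\lambda_{f_n}$ with respect to $\lambda_f$ equals $\lambda_{\chi_{\lambda_f}(f_n)}$, which is trivial; but the combinatorial straightening of $\lambda_f$ with respect to itself is also trivial, since each internal angle map $\alpha_w$ descends to a homeomorphism $\overline{w}/\lambda_f\to\R/\Z$ by Proposition~\ref{prop-unlinked-class} and hence distinct internal angles never arise from $\lambda_f$-equivalent external angles. As combinatorial tuning is the inverse of combinatorial straightening (Theorem~\ref{thm-comb-tuning}), this forces $\lambda_{f_n}=\lambda_f$ for all sufficiently large $n$, and the same argument gives $\lambda_{\hat{f}}=\lambda_f$.

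The step I expect to be the real obstacle is the claim that $y_n(Q)$ lands in the interior of $K(Q)$ rather than on $J(Q)$. This is precisely where the parabolic-implosion control behind Lemma~\ref{lem-Q-conv} is indispensable: if $y_n(Q)\in J(Q)$, then $g_n^{-1}(J(Q))$ would have a point of valence $2d_1\ge 4$ at the origin, $K(\tilde{Q}_n,v_1)$ would fail to be a full continuum with Jordan-curve boundary, and the rational lamination of the $v_1$-fiber of $\tilde{Q}_n$ — hence of $\chi_{\lambda_f}(f_n)$ — would no longer be forced to be trivial.
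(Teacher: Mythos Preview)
Your argument is correct and follows essentially the same route as the paper: show that the lamination of $\tilde{Q}_n$ (hence of $\chi_{\lambda_f}(f_n)$) is trivial, then invoke Theorem~\ref{thm-comb-tuning} to deduce $\lambda_{f_n}=\lambda_f$. The paper's proof is more economical on the one step you flag as the obstacle: rather than invoking Fatou-coordinate or Lavaurs-map estimates to place $y_n(Q)$ in $\Int K(Q)$, the paper simply cites Lemma~\ref{lem-crit}, whose proof obtains this for all sufficiently large $n$ from the elementary convergence $y_n(Q)\to 1/4\in\Int K(Q)$ (recall $K(\tilde{Q}_n,v_0)=K(Q)$ is independent of $n$). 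So the parabolic-implosion control you anticipate is not actually needed here; the openness of $\Int K(Q)$ together with $\tilde{Q}_n\to\tilde{Q}$ already does the job.
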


\begin{proof}
 We use the same notation as in the proof of the previous lemma.
 By the previous lemma, the critical points of $\tilde{Q}_n$ and
 $\tilde{Q}$ lie in the interior of the filled Julia set.
 Since $K(\tilde{Q}_n,v_0) = K(\tilde{Q},v_0) = K(Q)$, 
 it follows that the rational laminations of $\tilde{Q}_n$ and $\tilde{Q}$
 are trivial. By Theorem~\ref{thm-comb-tuning}, the rational
 laminations of $f_n$ and $\hat{f}$ are the combinatorial tuning of
 $\lambda_f$ and the trivial rational lamination, which is equal to
 $\lambda_f$ itself.
\end{proof}

\begin{lem}
 $\hat{f}=f$.
\end{lem}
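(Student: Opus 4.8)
The plan is to show that $\chi_{\lambda_f}(f)=\chi_{\lambda_f}(\hat f)$ and then conclude $f=\hat f$ from injectivity of the straightening map. First I would record that both $f$ and $\hat f$ lie in $\mathcal{R}(\lambda_f)$: by \ref{condII-preperiodic} the lamination $\lambda_f$ is post-critically finite and by \ref{condII-primitive} it is primitive, so $\mathcal{R}(\lambda_f)=\mathcal{C}(\lambda_f)$ is compact by Theorem~\ref{thm-cpt}; this set contains $f$ (since $\lambda_f\subset\lambda_f$) and, being closed and containing every $f_n$, it contains $\hat f=\lim_n f_n$ as well, with $\lambda_{\hat f}=\lambda_f$ by Lemma~\ref{lem-lambda-f_n}. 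Since $\lambda_f$ is post-critically finite, $\chi_{\lambda_f}$ is injective on $\mathcal{R}(\lambda_f)$ by Theorem~\ref{thm-inj}. Hence it suffices to show that both $\chi_{\lambda_f}(f)$ and $\chi_{\lambda_f}(\hat f)$ equal $\tilde Q$.

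For $\hat f$ I would argue exactly as in the proof of Lemma~\ref{lem-crit}: the limit $\hat\varphi=\lim_{n\to\infty}\lim_{m\to\infty}\psi_{n,m}$ of the uniformly $K$-quasiconformal hybrid conjugacies between the $\lambda_f$-renormalization of $f_{n,m}$ and $\tilde Q_{n,m}=\chi_{\lambda_f}(f_{n,m})$ is a $K$-quasiconformal hybrid conjugacy between the $\lambda_f$-renormalization of $\hat f$ and $\tilde Q=\lim_{n,m}\tilde Q_{n,m}$. It respects external markings, because each $\psi_{n,m}$ does and, by compactness of $\mathcal{R}(\lambda_f)$ together with $\tilde Q_{n,m}\to\tilde Q$, the external rays defining the markings of the renormalizations and of $\tilde Q_{n,m}$ converge to those of $\hat f$ and $\tilde Q$. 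Therefore $\chi_{\lambda_f}(\hat f)=\tilde Q$.

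For $f$ I would unwind \condI. The $v_0$-component of the $\lambda_f$-renormalization of $f$ is the period-$p$ ($\ell_0=p$) quadratic-like restriction of \ref{condI-quad-like}, which is hybrid equivalent to $z+z^2$ and hence straightens to its monic centered representative $Q(z)=z^2+1/4$. Fix a hybrid conjugacy $\psi$ of the full $\lambda_f$-renormalization onto a polynomial over $T(\lambda_f)$ respecting external markings, normalized so that $\psi$ conjugates the $v_0$-component to $Q$ with $\psi(\omega)=0$. The $v_1$-component is the degree-$d_1$ restriction of $f^N$ ($\ell_1=N$) with critical point $\omega'$, whose critical value is $f^N(\omega')$; by \ref{condI-cp} this critical value equals $f^p(\omega)$, so $\psi$ carries it to $Q(\psi(\omega))=Q(0)=1/4$. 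Since a monic centered degree-$d_1$ polynomial with its critical point at $0$ is $z^{d_1}+c$ and has critical value $c$, the $v_1$-component straightens to $z\mapsto z^{d_1}+1/4$; hence $\chi_{\lambda_f}(f)=\tilde Q$. Combining with the previous paragraph and Theorem~\ref{thm-inj} gives $f=\hat f$.

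The step that needs most care is this last identification $\chi_{\lambda_f}(f)=\tilde Q$: one must confirm that the quadratic-like restriction furnished by \ref{condI-quad-like} is genuinely the $v_0$-piece of the $\lambda_f$-renormalization (so that its straightening supplies the $v_0$-coordinate of $\chi_{\lambda_f}(f)$, with the critical point $\omega$ going to $0$), and then match the marked critical value $f^N(\omega')=f^p(\omega)$ with the point $1/4=Q(0)$ inside the straightened $v_0$-fiber. The compatibility of the external markings used here is automatic from the definition of $\chi_{\lambda_f}$, so the remaining verification is bookkeeping.
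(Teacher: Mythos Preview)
Your overall architecture is right and matches the paper: reduce to $\chi_{\lambda_f}(\hat f)=\chi_{\lambda_f}(f)=\tilde Q$ and then invoke injectivity (Theorem~\ref{thm-inj}). Your computation of $\chi_{\lambda_f}(f)=\tilde Q$ from \condI\ is fine.

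There is, however, a genuine gap in your argument for $\chi_{\lambda_f}(\hat f)=\tilde Q$. You assert that the limit $\hat\varphi=\lim_n\lim_m\psi_{n,m}$ is a \emph{hybrid} conjugacy. This is exactly what Lemma~\ref{lem-qc-conj} and Lemma~\ref{lem-conti-DH} warn against: a locally uniform limit of hybrid conjugacies is only guaranteed to be a \emph{quasiconformal} conjugacy. The point is that the $f_{n,m}$ are Misiurewicz, so $K(g_{f_{n,m}})$ has empty interior; hence the condition $\bar\partial\psi_{n,m}=0$ on $K(g_{f_{n,m}})$ imposes nothing on the Beltrami coefficient over the parabolic basin of $\hat f$, which appears only in the limit. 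Thus $\hat\varphi$ may carry nontrivial dilatation on $\Int K(g_{\hat f})$, and from $\hat\varphi$ alone you can only conclude that $\chi_{\lambda_f}(\hat f)$ is quasiconformally conjugate to $\tilde Q$, not equal to it.

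The paper closes this gap in one line by invoking the quasiconformal rigidity of $\tilde Q$: both critical orbits of $\tilde Q$ coincide after one step (they land on $(v_0,1/4)$), so $\tilde Q$ admits no nontrivial invariant Beltrami differential supported on its filled Julia set; hence any polynomial map over $T(\lambda_f)$ quasiconformally conjugate to $\tilde Q$ must equal $\tilde Q$. Insert this rigidity step (or cite Theorem~\ref{thm-conti} applied to $\tilde Q$) after your limit argument and the proof is complete.
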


\begin{proof}
 The quasiconformal rigidity of $\tilde{Q}$ implies that
 $\chi_{\lambda_f}(\hat{f})=\chi_{\lambda_f}(f)= \tilde{Q}$.
 Therefore, the lemma follows from the injectivity of the straightening
 map $\chi_{\lambda_f}$.
\end{proof}

\begin{proof}[Proof of Theorem~\ref{thm-para-purturb}]
 We have already constructed a convergent double sequence
 \[
 f_{n,m} \xrightarrow{m \to \infty} f_n \xrightarrow{n \to \infty}
 f
 \]
 in $\mathcal{R}(\lambda_f) \subset \mathcal{R}(\lambda_0)$,
 hence it is enough to check that this satisfies the conditions in \condIII.

 The condition \ref{cond3-parab} holds by changing the coordinate if necessary.
 The condition \ref{cond3-xy} follows from the fact that
 $\varphi_n(x_n)=\tilde{Q}_n(v_0,0) \ne 
 \varphi_n(y_n)=\tilde{Q}_n(v_1,0)$ by construction.
 Since all critical points except $\omega$ and $\omega'$ lie in the
 Julia set and preperiodic, their behavior is described in terms of
 $\lambda_f$. Thus \ref{cond3-crit-rel} follows because $f_{n,m}$ and
 $f_n$ admits $\lambda_f$.
 The $\lambda_f$-renormalizability of $f_{n,m}$ and $f_n$ and
 Lemma~\ref{lem-crit} imply
 \ref{cond3-quad-like} and \ref{cond3-filled-julia}.

 Let $g_Q$ be the Lavaurs map in Lemma~\ref{lem-Q-conv}.
 Then $g_Q(y_n(Q))=\alpha_n(Q)$ by construction.
 Since $Q_m^m(w)$ ($0 \le k \le m$)
 is sufficiently close to $K(Q)$ for $w$ sufficiently
 close to $1/4$, 
 we have 
 \[
 f_{n,m}^{mp} (z) 
 = \psi_{n,m}^{-1} \circ \tilde{Q}_{n,m}^m(\psi_{n,m}(z))
 \to \varphi_n^{-1} \circ \tilde{g}_n^Q(\varphi_n(z))
 \]
 for $z$ sufficiently close to $x_n$, 
 where $\tilde{g}_n^Q(v_0,w)=\lim_{m \to \infty}
 \tilde{Q}_{n,m}^m(v_0,w)=(v_0,g_Q(w))$.
 Therefore, $f_{n,m}^{mp}$ converges to a Lavaurs map 
 $g_n := \varphi_n^{-1}\circ \tilde{g}_Q \circ \varphi_n$.
 Since $\varphi_n$ is quasiconformal and 
 $g_Q'(Q(0)) \ne 0$, we have $g_n'(x_n) \ne 0$.
 Moreover,
 \[
  g_n(x_n)=\varphi_n^{-1} (v_0,g_Q(Q(0))) =
 \varphi_n^{-1}(v_0,\alpha(Q)) = \alpha_n.
 \]
 Therefore, we have proved \ref{cond3-geom-conv}.
\end{proof}


\section{Misiurewicz bifurcation}
\label{sec-mis-bif}

In this section, we prove the following:
\begin{thm}
 \label{thm-mis-perturb}
 Let $\lambda_0$ be a post-critically finite $d$-invariant rational
 lamination with a non-trivial Fatou critical relation and let $f_0 \in
 \mathcal{R}(\lambda_0)$ be a Misiurewicz $\lambda_0$-renormalizable
 polynomial.
 
 Then there exists a polynomial $f \in \mathcal{R}(\lambda_0)$
 arbitrarily close to $f_0$ such that 
 \begin{enumerate}
  \item $f$ satisfies \condII.
  \item $\mathcal{R}(\lambda_f) \subset \mathcal{R}(\lambda_0)$.
 \end{enumerate}
\end{thm}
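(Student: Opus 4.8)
The plan is to produce the desired $f$ by a combinatorial construction: first build an explicit Misiurewicz rational lamination $\lambda$ over $T_0$ realizing the picture in \condII, close to $\lambda_{f_0}$, then realize it by a Misiurewicz polynomial $f$ which is automatically $\lambda_0$-renormalizable, and finally check the two conclusions. Throughout, the key ingredients are combinatorial tuning (Theorem~\ref{thm-comb-tuning} and Theorem~\ref{thm-pcf-tuning}), the primitivity preservation lemma (Lemma~\ref{lem-comb-tuning-primitive}), combinatorial rigidity of Misiurewicz maps, and the compactness/fullness results (Theorems~\ref{thm-cpt} and~\ref{thm-pcf-tuning}) which guarantee that $\mathcal{R}(\lambda_0)$ contains enough dynamics.

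First I would work downstairs in the target space $\mathcal{C}(T(\lambda_0))$. Since $\lambda_0$ has a non-trivial Fatou critical relation, $T(\lambda_0)$ is not of disjoint type, so it either has two distinct critical vertices in one grand orbit or a vertex of degree $\ge 3$; in the capture-type model $T_{\capt,d_1+1}$ this is exactly the situation of Section~\ref{sec-para-bif}. In this target I would construct a post-critically finite (indeed Misiurewicz) polynomial $P$ over $T(\lambda_0)$, arbitrarily close to $\chi_{\lambda_0}(f_0)$, such that: one critical point $\omega$ on the ``quadratic'' vertex $v_0$ and one critical point $\omega'$ on another vertex $v_1$ are set up so that after an appropriate first return $f^p$ the quadratic-like restriction at $\omega$ is Misiurewicz (strictly preperiodic), a forward image of $\omega'$ lands, after $N$ steps, on $f^p(\omega)$ inside the quadratic-like filled Julia set, and every remaining critical point is strictly preperiodic. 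Such $P$ can be written down directly as a normalized polynomial over the schema by solving finitely many algebraic equations describing the critical orbit relations; because $\chi_{\lambda_0}(f_0)$ is already Misiurewicz and the perturbation can be taken arbitrarily small, $P$ lies off the exceptional algebraic set $Y$ of Theorem~\ref{thm-pcf-tuning} (which is proper and, when $\lambda_0$ is primitive, empty). Moreover I would arrange the periods of the periodic $\lambda_P$-unlinked classes to be as large as we like, so that Lemma~\ref{lem-comb-tuning-primitive} applies.

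Then I pull $P$ back through the straightening map. By Theorem~\ref{thm-pcf-tuning} there is $f \in \mathcal{R}(\lambda_0)$, post-critically finite (Misiurewicz, by Theorem~\ref{thm-comb-tuning}(iii)), with $\chi_{\lambda_0}(f) = P$; its rational lamination $\lambda_f$ is the combinatorial tuning of $\lambda_0$ and $\lambda_P$ (Theorem~\ref{thm-comb-tuning}(iv)), hence primitive by Lemma~\ref{lem-comb-tuning-primitive}, giving \ref{condII-primitive}; and, since tuning is a local operation on the renormalization locus and Misiurewicz maps are combinatorially rigid, $\lambda_f$ is as close to $\lambda_{f_0}$ as we wish, so by continuity of the realization $f$ can be taken arbitrarily close to $f_0$. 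The critical orbit picture of $P$ at $\omega$, $\omega'$ transports through the hybrid conjugacy of the $\lambda_0$-renormalization to give \ref{condI-parab}-style data except with a Misiurewicz (not parabolic) quadratic-like piece --- but \condI\ only requires the quadratic-like restriction to be hybrid equivalent to $z+z^2$... so here I would instead take $P$'s quadratic vertex to be exactly the parabolic $z+z^2$, which is still post-critically finite in the schema sense (the critical orbit is finite), keeping $f$ post-critically finite; the rest of the critical points being strictly preperiodic gives \ref{condII-preperiodic}. This yields conclusion (i). For conclusion (ii), $\mathcal{R}(\lambda_f) \subset \mathcal{R}(\lambda_0)$ because $\lambda_f \supset \lambda_0$ forces any $\lambda_f$-renormalizable map to be $\lambda_0$-combinatorially renormalizable, and the thickening lemma (Lemma~\ref{lem-thickening}) together with primitivity of $\lambda_f$ (so its exceptional set $X$ is empty, $\mathcal{C}(\lambda_f)=\mathcal{R}(\lambda_f)$ by Theorem~\ref{thm-cpt}) upgrades combinatorial renormalizability to genuine renormalizability uniformly.

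The main obstacle I expect is the simultaneous control of \emph{two} perturbations in the right space: I must move $P$ inside $\mathcal{C}(T(\lambda_0))$ so that the pulled-back $f$ lands inside $\mathcal{R}(\lambda_0)$ (not merely $\mathcal{C}(\lambda_0)$), and so that the quadratic-like restriction at $\omega$ is genuinely hybrid equivalent to $z+z^2$ with $\omega'$'s orbit hitting $f^p(\omega)$ \emph{inside} the small filled Julia set $K(f^p;V',V)$ --- a closed-ball condition that is not obviously open and must be arranged combinatorially, via prescribing which external rays land at the relevant (pre)periodic points and then invoking Kiwi's realization theorem. Keeping all of this consistent with the fixed internal angle system and with $\chi_{\lambda_0}(f)$ staying off the exceptional algebraic set $Y$ is where the bookkeeping is delicate; once the combinatorial object is in hand, realization and the verification of \condII\ are routine given the cited results.
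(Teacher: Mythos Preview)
There is a genuine gap at the heart of your argument. You propose to take $P$ with ``the quadratic vertex exactly the parabolic $z+z^2$'' and then apply Theorem~\ref{thm-pcf-tuning} to produce $f=\chi_{\lambda_0}^{-1}(P)$. But such a $P$ is \emph{not} post-critically finite: the critical point in the parabolic basin of $z^2+1/4$ has an infinite forward orbit (it only converges to the parabolic fixed point). Theorem~\ref{thm-pcf-tuning} therefore does not apply, and more fundamentally no direct tuning result is available for parabolic targets --- the paper says exactly this at the start of Section~\ref{sec-mis-bif}: ``tuning does not have such kind of nice properties for parabolic maps''. Relatedly, the $f$ you seek cannot be Misiurewicz or post-critically finite: \condI\ forces $\omega$ and $\omega'$ to lie in a parabolic basin with infinite orbits, so your line ``keeping $f$ post-critically finite'' is inconsistent with the conclusion you are after. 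Your justification that $f$ is close to $f_0$ (``by continuity of the realization'') also appeals to a continuity of $\chi_{\lambda_0}^{-1}$ that is not available in general and is in fact what the whole paper is showing can fail.

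The paper's proof circumvents this by an approximation-plus-compactness argument you are missing. One first works in $\mathcal{C}(T(\lambda_0))$: using critical portraits and Kiwi's combinatorial continuity (Theorem~\ref{thm-combcont}) one finds a nearby Misiurewicz $P_\Theta$, cuts out a one-dimensional algebraic slice $\mathcal{X}_0$ through it on which only one critical point is free, and invokes McMullen's universality (Theorem~\ref{thm-muniv}) to find a small Mandelbrot copy $\mathcal{M}'\subset\mathcal{X}_0$ of arbitrarily high period; the center $P_1$ can be taken primitive (Lemma~\ref{lem-comb-tuning-primitive}). Then one approximates the parabolic root $z^2+1/4$ by Misiurewicz $Q_n\in\partial\mathcal{M}$, pulls back to Misiurewicz $P_n\in\mathcal{M}'$, and now legitimately applies Theorem~\ref{thm-pcf-tuning} to get Misiurewicz $f_n=\chi_{\lambda_0}^{-1}(P_n)$. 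Since $\lambda=\lambda_{f_1}$ (the combinatorial tuning of $\lambda_0$ with $\lambda_{P_1}$) is primitive, $\mathcal{R}(\lambda)=\mathcal{C}(\lambda)$ is compact (Theorem~\ref{thm-cpt}), so a subsequence $f_n\to f\in\mathcal{R}(\lambda)$; closeness to $f_0$ comes from Proposition~\ref{prop-conti-inv} (quasiconformal rigidity of the Misiurewicz $f_0$), not from any general continuity of tuning. The limit $f$ then has the parabolic quadratic-like piece and satisfies \condII. This indirect route --- Misiurewicz approximants plus compactness of a primitive sublocus --- is the missing idea.
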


The main difficulties of the proof of this theorem are the following.
First, all perturbation must be done inside $\mathcal{R}(\lambda_0)$.
In order to do this, we perturb in $\mathcal{C}(T(\lambda_0))$ and use
tuning to get nice perturbations.

Secondly, tuning is not defined everywhere, nor continuous.
We need to study tuning and straightening of
perturbations of parabolic maps, where discontinuity might occur.
Hence we approximate parabolic maps by Misiurewicz maps with a help of
combinatorial continuity by Kiwi \cite{Kiwi-combcont} and apply
results in Section~\ref{sec-conti} to show that the limiting parabolic
map is close to the original map.

\begin{rem}
 Similar to Remark~\ref{rem-relax-preperiodicity}, 
 what we essentially need is a two-dimensional analytic set passing
 through $f_0$, on which we have two active critical points, and the
 rest of the dynamics behaves stable.
\end{rem}

\subsection{Critical portraits and combinatorial continuity}
\label{subsec-combcont}

Here we briefly recall the notion of critical portraits and 
the combinatorial continuity for Misiurewicz maps.
\begin{defn}
 Let $T$ be a mapping schema.
 A \emph{critical portrait} over $T$ is a collection of sets
 $\Theta = \{\Theta_1,\dots,\Theta_m\}$ such that
 each $\Theta_j$ is contained in a fiber $\{v_j\} \times \R/\Z$ for some
 $v_j \in |T|$ and 
 \begin{enumerate}
  \renewcommand{\theenumi}{\rm (CP\arabic{enumi})}
  \item for every $j$, $\#\Theta_j \ge 2$ and
	$\#m_{T}(\Theta_j)=1$;
  \item $\Theta_1,\dots,\Theta_m$ are pairwise unlinked;
  \item \label{item-portrait-deg}
	For each $v \in |T|$, 
	$\displaystyle \sum_{j:v_j=v}(\#\Theta_j - 1) = \delta(v)-1$.
 \end{enumerate}
 We say a critical portrait $\Theta$ is \emph{preperiodic} if 
 all elements in $\Theta_1,\dots,\Theta_m$ are preperiodic by
 $m_T(v,\theta)= (\sigma(v),m_{\delta(v)}(\theta))$.
 
 For a Misiurewicz polynomial $P$ over $T$, 
 $\Theta$ is a \emph{critical portrait of $P$} 
 if for each $j$, there exists a critical point $\omega_j$ such that
 $R_f(v,\theta)$ lands at $\omega_j$ for any $(v,\theta) \in \Theta_j$. 
 In this case, $\Theta$ is always preperiodic.

 We endow the space of all critical portraits over $T$ with the
 \emph{compact-unlinked topology}, which is generated by the
 subbasis formed by
 \[
  V_X = \{\Theta=\{\Theta_j\};\ X\mbox{ is unlinked with }\Theta_j
 ~(\forall j)\}
 \]
 where $X$ is a closed subset of $\{v\} \times \R/\Z$ for some $v \in
 |T|$.

 Let us denote by $\mathcal{A}(T)$ the set of critical portraits over
 $T$ and let 
 \[
  \preper(\mathcal{A}(T))=\{\Theta \in \mathcal{A}(T);
  \mbox{ preperiodic}\}.
 \]
\end{defn}

For each critical portrait $\Theta$, we can naturally associate 
the \emph{impression} of $\Theta$, which is a set
$I(\Theta) \subset \mathcal{C}(T) \cap \overline{\mathcal{S}(T)}$,
where $\mathcal{S}(T)$ is the \emph{shift locus}, i.e., the set of
polynomials over $T$ with all critical points escaping
\cite{Kiwi-combcont}. 
\begin{thm}[Kiwi]
 \label{thm-combcont}
 If a critical portrait $\Theta \in \mathcal{A}(T)$ is preperiodic,
 then $I(\Theta)$ consists of a unique Misiurewicz polynomial $f_\Theta$ over
 $T$ such that $\Theta$ is a critical portrait of $f_\Theta$.

 Moreover, a map
 \[
 \preper(\mathcal{A}(T)) \longrightarrow \Mis(\poly (T)), \quad 
 \Theta \longmapsto f_\Theta \in \poly(T),
 \]
 is well-defined and continuous,
 where $\Mis(\poly(T))=\{f \in \poly(T);\mbox{ Misiurewicz}\}$.
\end{thm}

\begin{proof}
 The first part is proved by Kiwi \cite[Theorem~1,
 Corollary~5.3]{Kiwi-combcont}.
 Hence we need only show the continuity of $\Theta \mapsto f_\Theta$.
 By definition, $f \in I(\Theta)$ if there is a sequence of maps
 $\{f_n\}$ in the \emph{visible} shift locus, which is a dense subset of
 the shift locus, such that $f_n \to f$ and $\Theta(f_n) \to \Theta$.
 
 Consider a sequence $\Theta_n \to \Theta$ in $\preper(\mathcal{A}(T))$.
 Take $f_{n,m}$ in the visible shift locus with $f_{n,m} \to
 f_{\Theta_n}$ and $\Theta(f_{n,m}) \to \Theta_n$, where
 $\Theta(f_{n,m})$ is the critical portrait of $f_{n,m}$.
  
 It is easy to see that $\mathcal{A}(T)$ is first-countable.
 Take a countable neighborhood basis $\{U_k\}_{k \in \N}$ at $\Theta$ in
 $\mathcal{A}(T)$.
 Let $N_k>0$ be such that $\Theta_n \in U_k$ for $n \ge N_k$.

 For any $\varepsilon>0$, choose $m_n$ for each $n$ such that
 \begin{itemize}
  \item $f_{n,m_n}$ is $\frac{\varepsilon}{2}$-close to $f_{\Theta_n}$;
  \item $f_{n,m_n}$ is $\frac{1}{2^n}$-close to the connectedness locus;
  \item $\Theta(f_{n,m_n}) \in U_k$ for $n \ge N_k$.
 \end{itemize}
 Then $\Theta(f_{n,m_n}) \to \Theta$, so $f_{n,m_n} \to I(\Theta) =
 \{f_\Theta\}$.
 Hence it follows that the distance of $f_\Theta$ and $f_{\Theta_n}$ is
 less than $\varepsilon$ for sufficiently large $n$.
 Therefore, $\Theta \mapsto f_\Theta$ is continuous.
\end{proof}

\subsection{Perturbation in the target space}
\label{subsec-perturb}

To prove Theorem~\ref{thm-mis-perturb}, we first construct some 
nice perturbations in the target space, i.e., we perturb
$P_0=\chi_{\lambda_0}(f_0)$.

We apply the theorem on universality of the Mandelbrot set by
McMullen \cite{McMullen-muniv}.
For a mapping schema $T$, let $T^{\per}=(|T^{\per}|,\sigma,\delta)$ be a
sub-schema such that $|T^{\per}|=\{v \in |T|;\ $periodic$\}$.
Then we have natural projection $\pi:\poly(T) \to \poly(T^{\per})$.
We call $\pi(P)$ the \emph{periodic part of $P$} for $P \in \poly(T)$.
\begin{thm}
 \label{thm-muniv}
 Let $T$ be a mapping schema and 
 consider a holomorphic one-parameter family $(P_\mu)_{\mu \in \Delta}$
 in $\poly(T)$ parameterized by the unit disk $\Delta$.
 Let $\mathcal{B}$ be the bifurcation locus of the family
 $(\pi(P_{\mu}))$ of the periodic parts of $(P_{\mu})$.
 Then either $\mathcal{B}$ is empty or 
 there exists a quasiconformal image $\mathcal{M}'$
 of $\mathcal{M}_\delta$ for some $\delta \ge 2$ whose boundary is contained in
 $\mathcal{B}$
 where $\mathcal{M}_\delta$ is the connectedness locus of the unicritical
 family $\{z^\delta+c;$ $c \in \C\}$ of degree $\delta$.
 
 More precisely, there exists some $n>0$ such that for $\mu$ in
 $\mathcal{M}'$, there exist a critical point $\omega_\mu$
 analytically parameterized by $\mu$ and a polynomial-like restriction
 $P_\mu^n:W_\mu' \to W_\mu$ hybrid equivalent to $z^\delta+c(\mu)$
 such that 
 \begin{itemize}
  \item $\omega_\mu \in W_\mu'$,
  \item the local degree of $P_\mu$ at $\omega_\mu$ is equal to
	$\delta$, and 
  \item $c:\mathcal{M'} \to \mathcal{M}_d$ extends to a quasiconformal
	map of the plane.
 \end{itemize}
\end{thm}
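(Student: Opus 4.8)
The plan is to extract from $(P_\mu)$ an analytic family of polynomial-like mappings whose connectedness locus is the desired $\mathcal{M}'$, and then read off all the asserted properties from the straightening theorem of Douady and Hubbard \cite{Douady-Hubbard-poly-like} together with a uniform quasiconformality bound. First I would use the classical description of the bifurcation locus. Since $\mathcal{B}\ne\emptyset$, the family $\pi(P_\mu)\in\poly(T^{\per})$ fails to be $J$-stable somewhere; as every vertex of $T^{\per}$ is periodic this is an ordinary stability problem, and by the theory of Ma\~n\'e--Sad--Sullivan and Lyubich the failure is witnessed by an \emph{active} critical point: there are $\mu_0\in\mathcal{B}$ and a marked critical point $\omega_\mu$ of $\pi(P_\mu)$ such that $\{\mu\mapsto \pi(P_\mu)^k(\omega_\mu)\}_{k\ge 0}$ is not a normal family on any neighbourhood of $\mu_0$.

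Second, by Montel's theorem the non-normality forces superattracting parameters to accumulate at $\mu_0$: arbitrarily close to $\mu_0$ there is $\mu_1$ at which $\omega_{\mu_1}$ is periodic, say of period $n$ and local degree $\delta\ge 2$. On a small parameter disk $\Delta'\ni\mu_1$ I would thicken this superattracting cycle and take first-return maps, obtaining polynomial-like restrictions $g_\mu = P_\mu^n : W_\mu'\to W_\mu$ of degree $\delta$ with $\omega_\mu\in W_\mu'$; for $\Delta'$ small enough $\mathbf{g}=(g_\mu)_{\mu\in\Delta'}$ satisfies the axioms of an analytic family of polynomial-like mappings, with the properness and local-product axioms routine once $\Delta'$ is shrunk. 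The map $c$ of the theorem is to be the straightening map of $\mathbf{g}$, and $\mathcal{M}'$ its connectedness locus.

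Third --- the heart of the argument --- I must check that $\mathbf{g}$ is \emph{full}, i.e.\ an ``$\mathcal{M}_\delta$-family'' in the sense of Douady--Hubbard: for a suitable subdisk $\Delta''\Subset\Delta'$ the normalized critical value $\mu\mapsto\beta_\mu$ of $g_\mu$ has winding number $1$ along $\partial\Delta''$. This is exactly where activity of $\omega$ at $\mu_0$ enters, rather than mere periodicity at $\mu_1$: after passing to the B\"ottcher-type coordinate of $g_\mu$ near infinity of its fibrewise filled Julia set, the function recording the escaping position of $\omega_\mu$ extends to a proper holomorphic map from $\Delta''\setminus\mathcal{M}'$ onto an annular region, and an argument-principle computation pins the winding number to $1$; one must choose $\Delta''$ as the appropriate connected component to make this proper. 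Granting fullness, the Douady--Hubbard straightening theorem for analytic families of polynomial-like mappings gives that $c:\mathcal{M}'\to\mathcal{M}_\delta$ is a homeomorphism with $\partial\mathcal{M}'\subset\mathcal{B}$, while $\omega_\mu\in W_\mu'$ and the local degree being $\delta$ are built into the construction.

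Finally, for the quasiconformal extension: exactly as in Lemma~\ref{lem-qc-conj}, a compactness argument on $\Delta''$ together with the measurable Riemann mapping theorem makes the hybrid conjugacies between $g_\mu$ and $z^\delta+c(\mu)$ uniformly $K$-quasiconformal; pulling back the standard complex structure through them, and through the explicit escaping dynamics off $\mathcal{M}'$, produces a bounded Beltrami form on a neighbourhood of $\mathcal{M}'$, and integrating it yields a quasiconformal homeomorphism of $\C$ restricting to $c$ on $\mathcal{M}'$. I expect the fullness/winding-number step to be the main obstacle; everything else is bookkeeping on top of the Douady--Hubbard machinery and the classical stability dichotomy.
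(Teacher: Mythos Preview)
Your argument is essentially correct and retraces McMullen's universality proof in the setting of polynomial maps over a mapping schema. The paper, however, takes a much shorter route: it observes that if $\mathcal{B}\ne\emptyset$ then there is a periodic vertex $v\in|T^{\per}|$ of period $p$ such that the family of ordinary polynomials $(P_{\mu,v}^p)_{\mu\in\Delta}$ already has nonempty bifurcation locus, and this bifurcation locus is contained in $\mathcal{B}$. One then simply invokes McMullen's theorem \cite[Theorem~1.1, Theorem~4.1]{McMullen-muniv} for this classical one-variable family; the reduction to a single fiber is the only new step.

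What each approach buys: the paper's reduction avoids redoing any of the MSS/Lyubich activity analysis, the fullness/winding-number computation, or the quasiconformal extension argument --- all of that is packaged inside the cited reference. Your approach is self-contained and shows that the schema structure plays no essential role, but at the cost of reproving a nontrivial theorem. In particular your ``heart of the argument'' (winding number equal to $1$) is genuinely delicate in McMullen's paper and is not something one should expect to dispatch in a paragraph; the honest thing to do is either cite it or acknowledge that step as a black box. If you keep your write-up, I would recommend at least noting that once you restrict to a single periodic fiber you are in the classical polynomial setting, so that the reader sees the reduction even if you then spell out McMullen's argument.
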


\begin{proof}
 This is a simple generalization of \cite[Theorem~1.1,
 Theorem~4.1]{McMullen-muniv}.
 For $\mu \in \Delta$, let us denote $P_\mu^n(v,z) =
 (\sigma^n(v),P_{\mu,v}^n(z))$.
 If $\mathcal{B}$ is nonempty, then there exists some periodic $v \in
 |T|$ by $\sigma$ such that the family 
 $(P_{\mu,v}^p)_{\mu \in \Delta}$ has nonempty bifurcation locus where
 $p$ is the period of $v$.
 Therefore, it contains the quasiconformal image of $\partial
 \mathcal{M}_\delta$, for some $\delta \ge 2$.
 Since $\mathcal{B}$ contains the bifurcation locus of $(P_{\mu,v}^p)$,
 the theorem follows by \cite[Theorem~4.1]{McMullen-muniv}.
\end{proof}

Now consider the family of polynomials over a mapping schema $T$ of
total degree $d=\delta(T)$ with
all critical points marked (counted with multiplicity);
\[
 \widehat{\poly}(T) =
 \{(P,(v_1,\omega_1),(v_2,\omega_2),\dots,(v_{d-1},\omega_{d-1}));\ 
 P \in \poly(T),\ \crit(P)=\{(v_j,\omega_j)\}\}.
\]

Consider a Misiurewicz polynomial $(P_0,(v_j,\omega_{0,j})) \in
\widehat{\poly}(T(\lambda_0))$ and its neighborhood $\mathcal{U}$.
Let $\Theta_0$ be a critical portrait of $P_0$.
Take a preperiodic critical portrait $\Theta$ close to $\Theta_0$ such that
\begin{itemize}
 \item $\Theta=\{\Theta_1,\dots,\Theta_{d-1}\}$, i.e.,
       $\#\Theta_j=2$ for each $j$;
 \item there exists some $N'>1$ such that
       $m_T(\Theta_1)=m_T^{N'}(\Theta_2)$ and $m_T^{N'-1}(\Theta_2)
       \not\subset \Theta_1$;
 \item let $p_j$ be the eventual period of $\Theta_j$ by $m_T$.
       then $p_1(=p_2),  p_3, \dots, p_{d-1}$ are mutually different.
\end{itemize}
Let $(P_\Theta,(v_j,\omega_{\Theta,j})) \in
\widehat{\poly}(T(\lambda_0))$ be the polynomial having $\Theta$ as a
critical portrait and the landing point of the external ray of angle
$(v_j,\theta) \in \Theta_j$ is $(v_j,\omega_j)$.
Observe that there are no multiple critical points, i.e.,
$(v_j,\omega_j)$ are mutually different because otherwise both the eventual
periods and the preperiods must coincide.
If $\Theta$ is sufficiently close to $\Theta_0$, then $P_\Theta \in
\mathcal{U}$ by Theorem~\ref{thm-combcont}.
Moreover, we have
\begin{align*}
 P_\Theta^{N'}(v_2,\omega_{\Theta,2}) &=P_\Theta(v_1,\omega_{\Theta,1}), &
 P_\Theta^{N'-1}(v_2,\omega_{\Theta,2}) &\ne (v_1,\omega_{\Theta,1}).
\end{align*}
In fact, the first equality is trivial and
if $P_\Theta^{N'-1}(v_2,\omega_{\Theta,2}) = (v_1,\omega_{\Theta,1})$, 
then all of the three external rays of angles in $\Theta_1 \cup
m_T^{N'-1}(\Theta_2)$ land at $(v_1,\omega_{\Theta,1})$ and are mapped
to the same ray $R_{f_\Theta}(\theta)$, where $\{\theta\}= m_T(\Theta_1)
= m_T^{N'}(\Theta_2)$. 
This implies that $(v_1,\omega_{\Theta,1})$ is a multiple critical
point, so it is a contradiction.

Since each critical point is preperiodic, 
$P_{0,v_j}^{n_j}(\omega_j)$ is a repelling periodic point, say $p_j$,
for $j=1,\dots,d-1$. 
Let $p_j(P)$ be the continuation of $p_j$ as a repelling periodic point
for $P$ close to $P_0$, and let
\begin{equation}
 \label{eqn:defining Misiurewicz}
 h_j(P,\omega_j)=P_{v_j}^{n_j}(\omega_j)-p_j(P).
\end{equation}
Consider a local analytic set near $(P_0,(v_j,\omega_{0,j}))$:
\begin{equation}
 \label{eqn-X}
  \mathcal{X}=\{(P,(v_j,\omega_j)) \in \widehat{\poly}(T(\lambda_0));~
  h_3(P,\omega_3)=\cdots =h_{d-1}(P,\omega_{d-1})=h(P,\omega_1,\omega_2)=0
  \},
\end{equation}
where $h(P,\omega_1,\omega_2)=P^{N'}(v_2,\omega_2)-P(v_1,\omega_1)$.

\begin{lem}
 $\dim \mathcal{X}=1$.
\end{lem}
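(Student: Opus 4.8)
The plan is to bound $\dim\mathcal{X}$ from both sides, using only affine dimension theory together with the rigidity of postcritically finite polynomials. For the lower bound, recall that $\widehat{\poly}(T(\lambda_0))$ is a finite branched cover of $\poly(T(\lambda_0))\cong\C^{d-1}$, hence an affine variety each of whose irreducible components has dimension $d-1$. The set $\mathcal{X}$ is cut out in $\widehat{\poly}(T(\lambda_0))$ by the $d-2$ equations $h_3=\cdots=h_{d-1}=0$ and $h=0$, and it is nonempty: the marked Misiurewicz polynomial $(P_\Theta,(v_j,\omega_{\Theta,j}))$ lies in $\mathcal{X}$, since $P_\Theta^{N'}(v_2,\omega_{\Theta,2})=P_\Theta(v_1,\omega_{\Theta,1})$ gives $h=0$, and $\Theta$ agrees with $\Theta_0$ away from $\omega_1,\omega_2$ so that $P_\Theta$ satisfies $h_j(P_\Theta,\omega_{\Theta,j})=0$ for $j\ge3$. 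By Krull's height theorem every irreducible component of $\mathcal{X}$ then has dimension at least $(d-1)-(d-2)=1$.

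For the upper bound I would adjoin the remaining function $h_1$ from Lemma~\ref{lem-find-eqns}, which encodes a preperiodicity relation for $\omega_1$. The point is that every $P$ in $\mathcal{X}\cap\{h_1=0\}$ is postcritically finite: $\omega_1$ has finite forward orbit by $h_1=0$, $\omega_3,\dots,\omega_{d-1}$ by $h_3=\cdots=h_{d-1}=0$, and $\omega_2$ as well, because $h=0$ forces $P^{N'}(v_2,\omega_2)=P(v_1,\omega_1)$, a point on the finite forward orbit of $\omega_1$. Since the postcritically finite locus (a fortiori the Misiurewicz locus) in $\poly(T(\lambda_0))$ contains no one-dimensional algebraic set --- Thurston rigidity, a fact already used in the proof of Lemma~\ref{lem-find-eqns} --- the algebraic set $\mathcal{X}\cap\{h_1=0\}$ is finite. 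Hence for any irreducible component $Z$ of $\mathcal{X}$: if $h_1\equiv0$ on $Z$ then $Z\subset\mathcal{X}\cap\{h_1=0\}$ is finite, so $\dim Z=0$; if $h_1\not\equiv0$ on $Z$ then $Z\cap\{h_1=0\}$ is a hypersurface in $Z$ of dimension $\dim Z-1$ contained in the finite set $\mathcal{X}\cap\{h_1=0\}$, so $\dim Z\le1$. In both cases $\dim Z\le1$, and with the lower bound this gives $\dim\mathcal{X}=1$.

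The step I expect to demand the most care is the bookkeeping behind nonemptiness of $\mathcal{X}$ and the claim that $\mathcal{X}\cap\{h_1=0\}$ is postcritically finite: one must check that the preperiodic critical portrait $\Theta$ --- with the distinct eventual periods $p_1=p_2,p_3,\dots,p_{d-1}$ fixed just before the statement --- can be chosen so that $P_\Theta$ satisfies every defining equation of $\mathcal{X}$, i.e.\ so that the $h_j$ ($j\ge3$) produced by Lemma~\ref{lem-find-eqns} are exactly the preperiodicity relations realized by $P_\Theta$ at $\omega_{\Theta,j}$. This is purely combinatorial and follows from the construction together with Theorem~\ref{thm-combcont}; everything else is routine affine dimension theory (Krull's height theorem, and the drop of dimension under restriction to a hypersurface) plus the single imported rigidity fact, already invoked in this section, that the postcritically finite locus contains no curve.
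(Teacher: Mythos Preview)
Your argument is essentially the paper's: it too sandwiches $\mathcal{X}$ between $\mathcal{X}'=\{h_3=\cdots=h_{d-1}=0\}$ (dimension $2$) and $\mathcal{X}''=\mathcal{X}\cap\{h_1=0\}$ (dimension $0$), using that each inclusion comes from imposing a single extra equation. You spell out what the paper leaves implicit, namely that every point of $\mathcal{X}''$ is post-critically finite and hence $\mathcal{X}''$ is discrete; one small slip is that $h_j(P_\Theta,\omega_{\Theta,j})=0$ for $j\ge 3$ holds not because $\Theta$ agrees with $\Theta_0$ there (it is only close), but because the $h_j$ of Lemma~\ref{lem-find-eqns} are taken for $P_\Theta$ itself.
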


\begin{proof}
 Consider the following local analytic sets:
\begin{align*}
 \mathcal{X}' &=\{h_3(P,\omega_3)=\dots=h_{d-1}(P,\omega_{d-1})=0\}, \\
 \mathcal{X}'' &= \{h_1(P,\omega_1) = h_3(P,\omega_3) = \dots =
 h_{d-1}(P,\omega_{d-1}) = h(P,\omega_1,\omega_2) = 0\}
 \end{align*}
 Since $\mathcal{X}'' = \{h_1=h_2=\dots=h_{d-1}=0\}$, we have
 $\dim \mathcal{X}'_0=2$ and $\dim
 \mathcal{X}''_0=0$ by \cite{vanStrien-transversality}. 
 Since $\mathcal{X}''_0 \subset \mathcal{X}_0 \subset \mathcal{X}'_0$
 and $\dim \mathcal{X}'_0- \dim \mathcal{X}_0, \dim \mathcal{X}_0 - \dim
 \mathcal{X}''_0 \le 1$, the dimension of $\mathcal{X}_0$ is one.
\end{proof}

Observe that since all critical points are simple for $P_\Theta$, 
the natural projection from $\widehat{\poly}(T_0)$ to
$\poly(T_0)$ is a local isomorphism at $P_\Theta$.
Hence we identify them to simplify the notation.

Let $\mathcal{B}$ be the bifurcation locus of the periodic parts in
$\mathcal{X}$.
Then since $P_\Theta$ is Misiurewicz and a free critical point $\omega_1$
is contained in the periodic part, $P_\Theta \in \mathcal{B}$.
In particular, $\mathcal{B}$ is nonempty.
Therefore, by Theorem~\ref{thm-muniv},
There exists a copy $\mathcal{M}' \subset \mathcal{U} \cap
\mathcal{X}$ of the Mandelbrot set $\mathcal{M}=\mathcal{M}_2$, for
all critical points are simple.
Let $\xi:\mathcal{M}' \to \mathcal{M}$ be the homeomorphism defined by
straightening.
Let $P_1$ be the center of $\mathcal{M}'$, i.e.,
the quadratic-like restriction $P_1^p:W_1' \to W_1$ is hybrid
equivalent to $z^2$ (i.e., $\xi(P_1)=z^2$).

\begin{lem}
 We can take $\mathcal{M}'$ so that $P_1$ (equivalently,
 $\lambda_{P_1}$) is primitive and $p$ is arbitrarily large.
\end{lem}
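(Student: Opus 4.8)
The plan is to keep the exterior combinatorics fixed while enlarging two things: the eventual periods $p_1=p_2<p_3<\dots<p_{d-1}$ occurring in the choice of the preperiodic critical portrait $\Theta$, and the smallness of the parameter disk around $P_\Theta$ on which Theorem~\ref{thm-muniv} is applied. I would first fix, for a given target lower bound $n_0$, how large the periods $p_j$ must be for primitivity (see below), choose such a $\Theta$ close to $\Theta_0$ --- possible because preperiodic critical portraits with arbitrarily large periods accumulate at $\Theta_0$ in the compact--unlinked topology, and $P_\Theta$ stays in the given neighbourhood $\mathcal U$ by Kiwi's combinatorial continuity (Theorem~\ref{thm-combcont}) --- and then rebuild $\mathcal X$, $\mathcal X_0$, $\mathcal B$ as before, with the same equations \eqref{eqn-X}.

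\emph{Enlarging $p$.} The period $p$ is the integer $n$ produced by Theorem~\ref{thm-muniv}: for every $\mu\in\mathcal M'$ the restriction $P_\mu^n\colon W_\mu'\to W_\mu$ has period $n$, so in particular $\omega_1$ is periodic of period $n$ for the centre $P_1$. For each $k\le n_0$ the locus $\{\mu\in\mathcal X_0;\ P_\mu^k(\omega_1(\mu))=\omega_1(\mu)\}$ is a proper algebraic subset of the curve $\mathcal X_0$, since $\omega_1$ is strictly preperiodic at the Misiurewicz parameter $P_\Theta\in\mathcal X_0$; hence its union over $k\le n_0$ is a finite set not containing $P_\Theta$, and there is $\varepsilon>0$ with $\Delta_\varepsilon:=\mathcal X_0\cap\{|\mu-P_\Theta|<\varepsilon\}\subset\mathcal U$ containing no centre of a copy of period $\le n_0$. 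Since $P_\Theta\in\mathcal B$, the bifurcation locus of the periodic part is nonempty on $\Delta_\varepsilon$, so Theorem~\ref{thm-muniv} applied to a uniformisation of $\Delta_\varepsilon$ yields a copy $\mathcal M'\subset\Delta_\varepsilon$, and its centre $P_1\in\Delta_\varepsilon$ then has period $p>n_0$.

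\emph{Primitivity of $\lambda_{P_1}$.} Since $P_\Theta$ is Misiurewicz it has no bounded Fatou components, so $\lambda_{P_\Theta}$ has no infinite unlinked class; and at $P_1$ the only periodic cycle of bounded Fatou components is the period-$p$ cycle carrying $\omega_1$ (the component of the captured point $\omega_2$ maps onto that cycle, and $\omega_3,\dots,\omega_{d-1}$ stay preperiodic along $\mathcal X_0$), so the infinite $\lambda_{P_1}$-unlinked classes are exactly this cycle together with the pull-back component of $\omega_2$. If $\lambda_{P_1}$ were not primitive, some $\lambda_{P_1}$-class $A$ would meet the closures of two distinct such classes; feeding this into Lemma~\ref{lem-non-primitive} and pulling the resulting coincidence back through the renormalisation window --- the part of the combinatorics of $P_1$ outside the window agrees with that of a Misiurewicz map $\xi^{-1}(q)\in\mathcal M'$ (with $q\in\mathcal M$ Misiurewicz), hence carries no infinite unlinked class, while the part inside the window is governed by the trivial lamination of $\xi(P_1)=z^2$ --- one obtains a coincidence of angles of bounded eventual period, whose period is, by Lemma~\ref{lem-same-period}, no smaller than one of the $p_j$. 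Thus once the $p_j$ are chosen large enough (the bound depending only on $\lambda_0$ and the fixed exterior portrait, via Lemma~\ref{lem-non-primitive}) this is impossible and $\lambda_{P_1}$ is primitive; equivalently, $\mathcal M'$ is forced to be of primitive rather than satellite renormalisation type, which is exactly what the large-period choice buys, in parallel with Lemma~\ref{lem-comb-tuning-primitive}.

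\emph{Main obstacle.} The delicate step is this primitivity bookkeeping: making precise that a non-primitive linking in $\lambda_{P_1}$ descends to a bounded-period coincidence in the combinatorics shared with the nearby Misiurewicz maps, and pinning down exactly which of the $p_j$ must be enlarged. This is the analogue, in the present capture/renormalisation setting, of Lemma~\ref{lem-comb-tuning-primitive}, and it must be carried out before shrinking $\Delta_\varepsilon$, so that the two enlargements --- of the $p_j$, for primitivity, and of $\varepsilon^{-1}$, for the size of $p$ --- do not conflict.
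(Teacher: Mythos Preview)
Your approach diverges substantially from the paper's, and the primitivity half contains a real gap.

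\textbf{The paper's argument.} The paper does not touch $\Theta$, the periods $p_j$, or the disk $\Delta_\varepsilon$ at all. It takes whatever copy $\mathcal{M}'$ Theorem~\ref{thm-muniv} hands you, with center $P_1$, and if $\lambda_{P_1}$ is not primitive it simply passes to a nested copy $\mathcal{M}''\subset\mathcal{M}'$ corresponding under $\xi$ to a \emph{primitive} small copy of high period inside $\mathcal{M}$. The center $P_2$ of $\mathcal{M}''$ then has $\lambda_{P_2}$ equal to the combinatorial tuning of $\lambda_{P_1}$ with a primitive high-period lamination over $T(\lambda_{P_1})=T_{\capt}$, and Lemma~\ref{lem-comb-tuning-primitive} applies directly. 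Replacing $\mathcal{M}'$ by $\mathcal{M}''$ simultaneously makes $p$ as large as one likes. That is the entire proof.

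\textbf{Where your plan breaks.} Your step for enlarging $p$ (excluding the finitely many low-period centers and reapplying Theorem~\ref{thm-muniv} on a smaller disk) is fine. The primitivity step is not. You assert that a non-primitive coincidence in $\lambda_{P_1}$ produces, via Lemma~\ref{lem-non-primitive} and Lemma~\ref{lem-same-period}, an angle whose eventual period is ``no smaller than one of the $p_j$''. But Lemma~\ref{lem-non-primitive} bounds the eventual period of such a class $A$ in terms of $\crit^P(\lambda_{P_1})$ and $CO^F(\lambda_{P_1})$, and the latter is governed by the renormalization period $p$ and the capture length, not by the portrait periods $p_j$ you chose for $\Theta$. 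In the case where $m_T^n(L_1)\ne m_T^n(L_2)$ both lie in $CO^F(\lambda_{P_1})$, the period of $A$ divides $p$ and has nothing to do with $p_3,\dots,p_{d-1}$; Lemma~\ref{lem-same-period} does not convert this into a lower bound by any $p_j$. So the bound you claim ``depending only on $\lambda_0$ and the fixed exterior portrait'' is not established, and the two enlargements you describe are entangled rather than independent. You flag this yourself as the ``main obstacle'', and indeed it is one: the bookkeeping you propose does not close.

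The moral is that the self-similarity of $\mathcal{M}$ already gives you primitive copies of arbitrarily high period for free, and Lemma~\ref{lem-comb-tuning-primitive} is tailor-made to transport primitivity through tuning. Use that instead of trying to arrange primitivity at the level of $\Theta$.
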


\begin{proof}
 Otherwise, take a small copy $\mathcal{M}''
 \subset \mathcal{M}'$ which corresponds to a primitive copy of
 sufficiently high period in $\mathcal{M}$.
 Then the rational lamination of the center $P_2 \in
 \mathcal{M}''$ is the combinatorial tuning of $\lambda_{P_1}$ and a
 primitive rational lamination over $T(\lambda_{P_1})=T_{\capt}$.
 Hence $P_2$ is primitive by Lemma~\ref{lem-comb-tuning-primitive}.

 Therefore, the lemma is obtained by replacing $\mathcal{M}'$ by
 $\mathcal{M}''$.
\end{proof}

Therefore, we have proved the following. 
\begin{lem}
 \label{lem-P}
 Let $P_0$ be a Misiurewicz polynomial over a mapping schema $T$.
 For any neighborhood $\mathcal{U}$ of $P_0$, there exist a
 one-dimensional algebraic subset $\mathcal{X} \subset
 \widehat{\poly}(T)$ and a small copy of the Mandelbrot set
 $\mathcal{M}' \subset \mathcal{X} \cap \mathcal{U}$ such that
 \begin{enumerate}
  \item $\mathcal{X}$ is a local analytic set defined by the formula
	\eqref{eqn-X}. In particular, there is essentially only one free
	critical orbit on $\mathcal{X}$. 
  \item for any $P \in \mathcal{M}'$, there exists a quadratic-like
	restriction $P^{p'}:W_P' \to W_P$ hybrid equivalent to
	$Q=\xi(P)$ such that the map $\xi:\mathcal{M}' \to \mathcal{M}$
	is a homeomorphism.  The period $p$ (depending only on
	$\mathcal{M}'$)
	can be taken arbitrarily large.
  \item Let $P_1$ be the center of $\mathcal{M}'$, i.e., let $P_1$
	satisfies $\xi(P_1)=z^2$. 
	Then $\lambda_{P_1}$ is primitive.
 \end{enumerate}
\end{lem}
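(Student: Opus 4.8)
The plan is to bundle together the construction carried out in the paragraphs immediately preceding the statement; since Lemma~\ref{lem-P} is phrased for an arbitrary mapping schema $T$, nothing is lost by running that construction verbatim with $T$ in place of $T(\lambda_0)$. So fix a critical portrait $\Theta_0$ of $P_0$. First I would choose a preperiodic critical portrait $\Theta=\{\Theta_1,\dots,\Theta_{d-1}\}$ close to $\Theta_0$ in the compact-unlinked topology with the prescribed combinatorial shape: each $\#\Theta_j=2$; there is an integer $N'>1$ with $m_T(\Theta_1)=m_T^{N'}(\Theta_2)$ and $m_T^{N'-1}(\Theta_2)\not\subset\Theta_1$; and the eventual periods $p_1=p_2,p_3,\dots,p_{d-1}$ are mutually distinct. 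Such portraits are dense near $\Theta_0$, and by Kiwi's theorem (Theorem~\ref{thm-combcont}) the associated Misiurewicz polynomial $P_\Theta$ depends continuously on $\Theta$, so $P_\Theta\in\mathcal{U}$ once $\Theta$ is close enough; moreover $\Theta$ is a critical portrait of $P_\Theta$, all of whose critical points are then simple and mutually distinct, since a coincidence $(v_i,\omega_i)=(v_j,\omega_j)$ would force equality of both preperiods and eventual periods. Reading off the ray relations gives $P_\Theta^{N'}(v_2,\omega_{\Theta,2})=P_\Theta(v_1,\omega_{\Theta,1})$ while $P_\Theta^{N'-1}(v_2,\omega_{\Theta,2})\ne(v_1,\omega_{\Theta,1})$, the latter because otherwise three external rays would land at the simple critical point $(v_1,\omega_{\Theta,1})$ and all map to one ray.

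Next I would set up the algebraic set $\mathcal{X}$ of \eqref{eqn-X}: take $h_1,\dots,h_{d-1}$ from Lemma~\ref{lem-find-eqns}, each encoding a preperiodicity relation satisfied by the corresponding critical point of $P_\Theta$, and replace the equation $h_1=0$ by $h(P,\omega_1,\omega_2)=P^{N'}(v_2,\omega_2)-P(v_1,\omega_1)=0$, keeping $h_3=\dots=h_{d-1}=0$. Comparing $\mathcal{X}$ with $\mathcal{X}'=\{h_3=\dots=h_{d-1}=0\}$, of dimension $2$, and with $\mathcal{X}''=\{h_1=h_3=\dots=h_{d-1}=h=0\}$, which is $0$-dimensional because it lies in the Misiurewicz set and that set contains no algebraic curve, and using that each extra equation lowers dimension by at most one, one concludes $\dim\mathcal{X}=1$. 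Let $\mathcal{X}_0$ be the irreducible component of $\mathcal{X}$ through $P_\Theta$. Since all critical points of $P_\Theta$ are simple, the forgetful map $\widehat{\poly}(T)\to\poly(T)$ is a local isomorphism there, so I may regard $\mathcal{X}_0$ as a holomorphic curve in $\poly(T)$ carrying, in essence, a single free critical point $\omega_1$: the others are frozen by $h_3,\dots,h_{d-1}$ and $\omega_2$ is slaved to $\omega_1$ through $h$; and $\omega_1$ belongs to the periodic part of the schema because $\Theta_1$ is periodic.

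Then I would apply the universality theorem (Theorem~\ref{thm-muniv}) to $\mathcal{X}_0$. Its bifurcation locus $\mathcal{B}$ of periodic parts is nonempty: $P_\Theta$ is Misiurewicz with $\omega_1$ active inside the periodic part, hence $P_\Theta\in\mathcal{B}$. So there is a quasiconformal copy $\mathcal{M}'\subset\mathcal{U}\cap\mathcal{X}_0$ of some $\mathcal{M}_\delta$; since the free critical point is simple we must have $\delta=2$, and over $\mathcal{M}'$ there is a quadratic-like restriction $P^{p}:W_P'\to W_P$ hybrid equivalent to $\xi(P)$, with $\xi:\mathcal{M}'\to\mathcal{M}$ a homeomorphism. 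Finally, to make the center $P_1=\xi^{-1}(z^2)$ primitive while keeping $p$ large: if $\lambda_{P_1}$ is not primitive, shrink $\mathcal{M}'$ to a baby copy $\mathcal{M}''$ corresponding to a primitive Mandelbrot copy of very large period in $\mathcal{M}$; then the rational lamination of the new center is the combinatorial tuning of $\lambda_{P_1}$ by a primitive lamination of large period, hence primitive by Lemma~\ref{lem-comb-tuning-primitive}, and replacing $\mathcal{M}'$ by $\mathcal{M}''$ also makes the quadratic-like period as large as we wish. This establishes all three assertions. The step I expect to require the most care is the dimension count together with the verification that $\omega_1$ is genuinely active on $\mathcal{X}_0$, so that $\mathcal{B}\ne\emptyset$: one has to exclude the possibility that imposing $h=0$ in place of $h_1=0$ forces $\omega_1$ to stay preperiodic on the whole component, and this is exactly where the distinctness of the periods $p_1,\dots,p_{d-1}$ and the isolation of $P_\Theta$ in $\mathcal{X}''$ come in.
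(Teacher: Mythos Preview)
Your proposal is correct and follows essentially the same route as the paper: choose a nearby preperiodic portrait $\Theta$ with the specified combinatorics, use Kiwi's combinatorial continuity to land $P_\Theta\in\mathcal{U}$, set up $\mathcal{X}$ via Lemma~\ref{lem-find-eqns} and the relation $h$, compute $\dim\mathcal{X}=1$ by the sandwich $\mathcal{X}''\subset\mathcal{X}\subset\mathcal{X}'$, apply Theorem~\ref{thm-muniv} on the irreducible component through $P_\Theta$, and pass to a primitive sub-copy via Lemma~\ref{lem-comb-tuning-primitive}. One small wording point: in forming $\mathcal{X}$ both $h_1=0$ and $h_2=0$ are dropped (not just $h_1=0$) and replaced by the single relation $h=0$, which is why $\mathcal{X}'$ has dimension~$2$; your dimension count already uses this correctly.
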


\subsection{Proof of Theorem~\ref{thm-mis-perturb}}

Let $\lambda_0$ be a post-critically finite $d$-invariant rational
lamination and let $f_0 \in \mathcal{R}(\lambda_0)$ be Misiurewicz.
Consider the algebraic set $X$ in Lemma~\ref{lem-thickening}.
If $f_0 \in X$, then perturb $f_0$ as in Section~\ref{subsec-perturb}
and we assume $f_0 \not \in X$.
(Precisely speaking, consider a one-parameter subfamily where all but one
critical orbit relation is preserved. By transversality,
\cite{vanStrien-transversality} we may assume $f_0$ is discrete in the
intersection of this subfamily and $X$,
so we can perturb $f_0$ on this subfamily).
Take a small neighborhood $\mathcal{V}$ of $f_0$.
We may assume $\mathcal{V} \cap \mathcal{C}(\lambda_0) \subset
\mathcal{R}(\lambda_0)$ by Lemma~\ref{lem-thickening}.
Take a neighborhood $\mathcal{U}$ of $P_0=\chi_{\lambda_0}(f_0)$
sufficiently small such that 
\begin{itemize}
 \item the codimension one algebraic set $Y$ in Theorem~\ref{thm-pcf-tuning}
       does not intersect $\mathcal{U}$ (if $P_0 \in Y$, we again
       perturb $f_0$ so that $P_0 \not \in Y$). 
       Therefore, for any post-critically finite $P \in \mathcal{U}$,
       there exists a unique $f$ such that $\chi_{\lambda_0}(f)=P$, and
 \item $\chi_{\lambda_0}^{-1}: \Mis(\poly(T(\lambda_0))) \cap
       \mathcal{U} \to \Mis(\poly(d)) \cap \mathcal{R}(\lambda_0)$
        is a homeomorphism into its image and the closure of the image
       is contained in $\mathcal{V}$.
\end{itemize}
The existence of such a neighborhood $\mathcal{U}$ is guaranteed by
Theorem~\ref{thm-conti}
and Proposition~\ref{prop-Mis-proper}.

Now apply Lemma~\ref{lem-P} for this $\mathcal{U}$.
Take a sequence of Misiurewicz polynomials $Q_n \in \partial
\mathcal{M}$ ($n \ge 2$) such that $Q_n \to Q_0(z)=z^2+1/4$ and let
$P_n=\xi^{-1}(Q_n)$.
(Recall that $P_1$ is the center of $\mathcal{M}'$.)
Then $P_n$ is also Misiurewicz for $n \ge 2$.
Let $f_n=\chi_{\lambda_0}^{-1}(P_n) \in \mathcal{V}$ for $n \ge 1$.

Let $\lambda=\lambda_{f_1}$ be the combinatorial tuning of $\lambda_0$
and $\lambda_{P_1}$. Since we may assume the period $p$ of
quadratic-like renormalization of $P_1$ is arbitrarily large,
$\lambda$ is also primitive by Lemma~\ref{lem-comb-tuning-primitive}.
Therefore, $\mathcal{C}(\lambda)=\mathcal{R}(\lambda)$ is compact.

As in Lemma~\ref{lem-P},
there exists a one-dimensional algebraic subset
\[
 \mathcal{Y}=\{(f,\omega_1,\dots,\omega_{d-1}) \in \widehat{\poly}(d);~
 \hat{h}_3(\omega_3)=\cdots =
 \hat{h}_{d-1}(\omega_{d-1})=\hat{h}(f,\omega_1,\omega_2)=0\}
\]
containing all $f_n$.
Since $\mathcal{R}(\lambda)$ is compact,
we may assume that $f_n$ converges to some $f \in \mathcal{R}(\lambda)$.
Then
\[
 f \in \mathcal{C}(\lambda) \cap \mathcal{V} \subset
 \mathcal{C}(\lambda_0) \cap \mathcal{V} =\mathcal{R}(\lambda_0) \cap
 \mathcal{V}.
\]
Namely, $f$ is $\lambda_0$-renormalizable and close to $f_0$.

Since $\mathcal{Y}$ is closed, $f$ (precisely speaking,
$(f,\omega_1,\dots,\omega_{d-1})$) also lies in $\mathcal{Y}$.
Take $w_j \in |T(\lambda_0)|$ such that $\omega_j \in K_f(w_j)$ for
$j=1,2$ and let
\begin{align*}
 N &= \sum_{j=1}^{N'-1}\ell_{\sigma^j(w_2)}, &
 p &= \sum_{j=1}^{p'-1}\ell_{\sigma^j(w_1)}.
\end{align*}
Then it follows that $f^p(\omega_1)=f^N(\omega_2)$.
By this relation and Theorem~\ref{thm-quad-conti}, 
we have $\chi_{\lambda}(f) = \tilde{Q}_0$, where 
\[
 \tilde{Q}_0(v_j,z) = 
  \begin{cases}
   (v_0,Q_0(z)) & \mbox{when }j=0, \\
   (v_0,z^2+Q_0(0)) & \mbox{when }j=1.
  \end{cases}
\]
It is easy to check that $f$ satisfies \condII\ 
(note that $p$ above is different from that in \condI).
\qed

\section{Discontinuity}
\label{sec-discont}

Now we give a proof of the main theorem:

\begin{proof}[Proof of Main Theorem]
 First, observe that there always exists a Misiurewicz polynomial $f_0
 \in \mathcal{R}(\lambda_0)$ assuming that $\mathcal{R}(\lambda_0)$ is
 nonempty, by Theorem~\ref{thm-combcont} and Theorem~\ref{thm-pcf-tuning}.

 Assume that $\chi_{\lambda_0}$ is continuous on $\mathcal{V} \cap
 \mathcal{R}(\lambda_0)$ for a neighborhood $\mathcal{V} \subset \poly(T_0)$
 of $f_0$.

 By Theorem~\ref{thm-mis-perturb} and Theorem~\ref{thm-para-purturb},
 there exists $\tilde{f} \in \mathcal{V} \cap
 \mathcal{R}(\lambda_0)$ satisfying \condIII.
 In the following, we use the notations in \condIII\ like $\omega,\ \omega',\
 p,\ N,\ V$ and $V'$ for $\tilde{f}$.
 Let $w_0, w_1 \in |T(\lambda_0)|$ satisfy $\omega \in K_{\tilde{f}}(w_0)$ and
 $\omega' \in K_{\tilde{f}}(w_1)$.
 Let $s'$ be the period of $w_0$ by $\sigma$, and 
 $s$ be the period of $K_{\tilde{f}}(w_0)$, in other words,
 \[
  s=\sum_{n=0}^{s'-1} \ell_{\sigma^n(w_0)}.
 \]
 Similarly, define $N'$ and $p'$ by 
 \begin{align*}
  \sum_{n=0}^{N'-1}\ell_{\sigma^n(w_1)}&=N, &
  \sum_{n=0}^{p'-1}\ell_{\sigma^n(w_0)}&=p.
 \end{align*}
 Observe that $K(\tilde{f}^p;V', V) \subset K_{\tilde{f}}(w_0)$.
 In particular, $s'$ divides $s$ and $p'$ divides $p$.

 By shrinking $\mathcal{V}$ if necessary,
 we may assume any $f \in \mathcal{V}$ has a polynomial-like restriction
 $g_f =(f^{\ell_w}:U_{g,w}' \to U_{g,\sigma(w)})_{w \in
 |T(\lambda_0)|}$ over $T(\lambda_0)$ such that
 \begin{enumerate}
  \item $g_f$ is a $\lambda_0$-renormalization when $f \in
	\mathcal{R}(\lambda_0)$.
  \item $(f^s: U_{f,w_0}'' \to U_{f,w_0})_{f \in \mathcal{V}}$ forms an AFPL,
	where $U_{f,w_0}''$ is the component of $f^{-s}(U_{f,w_0})$ containing
	$K(g_f,w_0)$.
 \end{enumerate}
 It follows by Theorem~\ref{thm-mis-perturb} that
 $K(\tilde{f}^p;V',V) \subset K_{\tilde{f}}(w_0)$.
 Observe that by definition,
 $g_f^{s'} = f^s$ and $g_f^{p'} = f^p$ on $K_f(w_0)$, and
 $g_f^{N'} = f^N$ on $K_f(w_1)$.

 By taking a finite branched cover of $\mathcal{V}$ if necessary,
 we may assume there exist analytic parameterizations of critical points
 $\omega(f)$ and $\omega'(f)$ such that $\omega(\tilde{f})=\omega$ and
 $\omega'(\tilde{f})=\omega'$.
 For $f \in \mathcal{R}(\lambda_0) \cap \mathcal{V}$, 
 let $P_f = \chi_{\lambda_0}(f) \in \mathcal{C}(T(\lambda_0))$ and 
 $\psi_f=(\psi_{f,w})_{w \in |T(\lambda_0)|}$ be a hybrid conjugacy
 between $g_f$ and $P_f$ 
 (we can take such a hybrid conjugacy $\psi_f$ by shrinking
 $U_{f,w}$ if necessary).
 Let
 \begin{align*}
  \omega(P_f)  &= (w_0,\psi_{f,w_0}(\omega(f))), & 
  \omega'(P_f) &= (w_1,\psi_{f,w_1}(\omega'(f)))
 \end{align*}
 be the
 critical points for $P_f$ corresponding to $\omega(f)$ and
 $\omega'(f)$ respectively.
 Let $x(f)=f^p(\omega(f))$ and $y(f)=f^N(\omega'(f))$
 and define $x(P_f)$ and $y(P_f)$ by
 \begin{align*}
  (w_0, x(P_f)) &= P_f^{s'}(\omega(P_f)), &
  (w_0, y(P_f)) &= P_f^{N'}(\omega'(P_f)).
 \end{align*}
 Observe that $x(P_f), y(P_f) \in K(P_f,w_0)$.
 Now consider an AFPL2MP 
 \[
  \h=(f^s:U_f'' \to U_f,x(f),y(f))_{f \in \mathcal{V}}.
 \]
 Then the straightening map $\chi_\h$ for $\h$ satisfies
 \[
  \chi_\h(f)=(\hat{P}_{f},
  \psi_{f,w_0}(f^p(\omega(f))),
  \psi_{f,w_0}(f^N(\omega'(f))) )
  = (\hat{P}_{f}, x(P_{f}), y(P_{f}))
 \]
 where $P_{f}^{s'}(w_0,z)=(w_0,\hat{P}_{f}(z))$.
 Since  $\chi_{\lambda_0}$ is continuous on $\mathcal{N}$,
 $\chi_\h$ is also continuous.
 
 Consider a repelling periodic point $\alpha=\alpha(f)$ in the filled
 Julia set $K(\tilde{f}^p;V',V)$ of the quadratic-like restriction $\tilde{f}^p:V'
 \to V$.
 Then we can take $f_{n,m}, f_n \in \mathcal{R}(\lambda_0) \cap
 \mathcal{V}$
 satisfying the conditions in \condIII.
 Therefore, we can apply Theorem~\ref{thm-conti-mult}, namely, we have
 \begin{equation}
  \label{eqn-mult}
  |\mult_{\tilde{f}}(\alpha)|=|\mult_{P_{\tilde{f}}}(\psi_{\tilde{f},w_0}(\alpha))|.
 \end{equation}
 
 Observe that $\psi_{\tilde{f}}$ is also a hybrid conjugacy from $\tilde{f}^p:V' \to
 V$ to a quadratic-like restriction of $P_{\tilde{f}}^{p'}$.
 Since \eqref{eqn-mult} holds for any repelling periodic point $\alpha
 \in K(\tilde{f}^p;V',V)$, it follows that $\psi_{\tilde{f}}|_V$ preserves multipliers.
 Therefore, by Theorem~\ref{thm-mult-semiconj}, 
 $\tilde{P}_{\tilde{f}}$ and $\tilde{f}^p$ are \semiconj,
 where $\tilde{P}_{\tilde{f}}$ is defined by 
 $P_{\tilde{f}}^{p'}(w_0,z)=(w_0,\tilde{P}_{\tilde{f}}(z))$.
 In particular, $\deg \tilde{P}_{\tilde{f}}=\deg \tilde{f}^p$.
 However, since $\lambda_0$ is nontrivial, we have $\deg P_{\tilde{f},w} <
 \deg \tilde{f} \le \deg \tilde{f}^{\ell_w}$
 for all $w$, so $\deg \tilde{P}_{\tilde{f}} < (\deg \tilde{f})^p$, that
 is a contradiction.
 Therefore, $\chi_{\lambda_0}$ is not continuous on $\mathcal{V}$.
\end{proof}

\begin{rem}
 \label{rem-discont-at-f_n}
 More precisely, we have proved the following:
 for any repelling periodic point $\alpha \in K(\tilde{f}^p;V',V)$ such
 that \eqref{eqn-mult} does not hold (such a repelling periodic point
 always exists), there exists a double sequence $f_{n,m} \to f_n \to \tilde{f}$
 satisfying the conditions in \condIII\ such that
 \[
 \lim_{m \to \infty} \chi_{\lambda_0}(f_{n,m}) \ne
 \chi_{\lambda_0}(f_n)
 \]
 for sufficiently large $n$, because $\chi_{\lambda_0}(f_n) \to
 \chi_{\lambda_0}(\tilde{f})$ by the quasiconformal rigidity of $\tilde{f}$.
\end{rem}


\section{The case of rational and transcendental entire maps}
\label{sec-rat-trans}

We do not know very much how rich the dynamics in a renormalizable
set is for families of rational maps and transcendental entire maps.
However, since the target space of a straightening map is a family of
polynomials over a mapping schema, we can apply the same argument to
obtain the following:

\begin{thm}
 \label{thm-rational}
 Let $(f_\mu)_{\mu \in \Lambda}$ be an analytic family of
 rational maps of degree $d \ge 3$.
 Assume there exists an (externally marked) AFPL
 $\g = (g_\mu = (f_\mu^{\ell_v}:U_v \to U_{\sigma(v)})_{v \in
 |T|})_{\mu \in \lambda}$ over a mapping schema
 $T=(|T|,\sigma,\delta)$ having a non-trivial critical relation.
 Let $\chi:\mathcal{C}(\g) \to \mathcal{C}(T)$ be the straightening map
 for $\g$. 
 For a Misiurewicz map $P_0 \in \mathcal{C}(T)$, assume there exist
 a neighborhood $\mathcal{U}$ of $P_0$ and a map
 $s:\mathcal{U} \cap \mathcal{C}(T) \to \mathcal{C}(\g)$
 such that $\chi \circ s$ is the identity.
 Then $s$ is not continuous,
 except when $(f_\mu)$ is affinely conjugate to a family of
 polynomials and $\delta(v)=d$ for all $v \in |T|$.

 In particular, there is no homeomorphic restriction of $\chi$ onto
 $\mathcal{U} \cap \mathcal{C}(T)$.
\end{thm}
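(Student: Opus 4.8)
The plan is to run the proof of the Main Theorem with the target family $\poly(T)$ playing the role of $\poly(d)$, and to use the a priori given section $s$ to transplant everything back to $\Lambda$. Recall that all of the combinatorial input of Sections~\ref{sec-comb}--\ref{sec-discont} (rational laminations, combinatorial tuning, critical portraits, the conditions \condI--\condIII, and Theorems~\ref{thm-mis-perturb}--\ref{thm-para-purturb}) is available for polynomials over a mapping schema, because $\poly(T)$ is itself such a family and $T$ has a non-trivial critical relation. Concretely, take $\lambda_0$ to be the trivial $T$-invariant rational lamination; then $T(\lambda_0)\cong T$, $\mathcal{R}(\lambda_0)=\mathcal{C}(T)$, and $\lambda_0$ is post-critically finite with non-trivial Fatou critical relation. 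The given Misiurewicz $P_0\in\mathcal{C}(T)=\mathcal{R}(\lambda_0)$ is $\lambda_0$-renormalizable, so the mapping-schema form of Theorem~\ref{thm-mis-perturb} produces $P_1\in\mathcal{C}(T)$, arbitrarily close to $P_0$ and hence inside $\mathcal{U}$, satisfying \condII; then Theorem~\ref{thm-para-purturb} gives that $P_1$ satisfies \condIII\ over $T$: for every repelling periodic point of the quadratic-like restriction $P_1^p\colon V'\to V$ there are double sequences $P_{n,m}\xrightarrow{m\to\infty}P_n\xrightarrow{n\to\infty}P_1$ in $\mathcal{C}(T)$ realizing \ref{cond3-parab}--\ref{cond3-geom-conv}. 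Discarding finitely many terms, all $P_n,P_{n,m}$ lie in $\mathcal{U}\cap\mathcal{C}(T)$.

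Now assume, for contradiction, that $s$ is continuous, and set $\mu_1=s(P_1)$, $\mu_n=s(P_n)$, $\mu_{n,m}=s(P_{n,m})$ in $\mathcal{C}(\g)\subset\Lambda$; continuity of $s$ gives $\mu_{n,m}\to\mu_n\to\mu_1$. Since $\chi\circ s=\mathrm{id}$, the polynomial-like map $g_{\mu_1}$ over $T$ is hybrid equivalent (respecting markings) to $P_1$, and likewise for the $\mu_n,\mu_{n,m}$. Transporting the structure \condI\ of $P_1$ through such a hybrid conjugacy $\psi_{\mu_1}$ shows that a suitable iterate $f_{\mu_1}^{\,s_0}$ of the rational map — with $s_0=\sum_{k=0}^{p'-1}\ell_{\sigma^k(w_0)}$ running over the $p'$ schema vertices $w_0,\sigma(w_0),\dots$ visited by the parabolic cycle — has a quadratic-like restriction near a non-degenerate $1$-parabolic fixed point, normalized to $0$ by an affine coordinate change, and that the two marked critical orbits give holomorphic marked points $x(\mu),y(\mu)$ on a (possibly finitely branched) neighborhood $\mathcal{W}\ni\mu_1$ in $\Lambda$, exactly as in the proof of the Main Theorem. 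This yields an AFPL2MP $\h=(f_\mu^{\,s_0}\colon U_\mu''\to U_\mu,\ x(\mu),\ y(\mu))_{\mu\in\mathcal{W}}$ of degree $2$, whose straightening map $\chi_\h$ is continuous by Theorem~\ref{thm-quad-conti}.

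Next I would verify the hypotheses of Theorem~\ref{thm-conti-mult} for $\h$ along $\mu_{n,m}\to\mu_n\to\mu_1$ and a marked repelling periodic point $\alpha(\mu)$, the latter obtained by following, through $\psi$, a repelling periodic point of the quadratic-like restriction of $P_1$. Items (i)--(iii) are the transported content of \condI\ for $P_1$. For item (iv): the hybrid conjugacies $\psi_{\mu_{n,m}}$ between $g_{\mu_{n,m}}$ and $P_{n,m}$ are uniformly quasiconformal, and since $\mu_{n,m}\to\mu_n$ (this is where continuity of $s$ enters) and $\chi(\mu_{n,m})=P_{n,m}\to P_n=\chi(\mu_n)$, they converge after a subsequence to a quasiconformal conjugacy between $g_{\mu_n}$ and $P_n$ by Lemma~\ref{lem-qc-conj}; transporting the geometric convergence $P_{n,m}\xrightarrow{\geom}(P_n,g_n^{P})$ of \ref{cond3-geom-conv} then yields $f_{\mu_{n,m}}^{\,s_0}\xrightarrow{\geom}(f_{\mu_n}^{\,s_0},g_n)$ with $g_n(x(\mu_n))=\alpha(\mu_n)$ and $g_n'(x(\mu_n))\neq0$, exactly as in the proof of Theorem~\ref{thm-para-purturb}, while $x(\mu_n)\neq y(\mu_n)$ and the parabolicity come from \ref{cond3-xy} and \ref{cond3-parab}. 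Item (v) is immediate from continuity of $\chi_\h$ and $\mu_{n,m}\to\mu_n\to\mu_1$. Theorem~\ref{thm-conti-mult} then forces $|\mult_{f_{\mu_1}^{\,s_0}}(\alpha(\mu_1))|=|\mult_{\chi_\h(\mu_1)}(\psi_{\mu_1}(\alpha(\mu_1)))|$; running this over all repelling periodic points in the filled Julia set of the quadratic-like restriction, $\psi_{\mu_1}$ restricts to a hybrid conjugacy \emph{preserving multipliers} between the polynomial-like restriction of $f_{\mu_1}^{\,s_0}$ and the corresponding iterate $\tilde{P}$ of $P_1$ on the fiber $w_0$.

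By Theorem~\ref{thm-mult-semiconj}, $f_{\mu_1}^{\,s_0}$ and $\tilde{P}$ are conjugate by an irreducible holomorphic correspondence, hence have the same degree:
\[
 d^{\,s_0}=\prod_{k=0}^{p'-1}\delta(\sigma^k(w_0)).
\]
Since $\delta(v)\leq d$ for every $v$ and $s_0=\sum_{k=0}^{p'-1}\ell_{\sigma^k(w_0)}\geq p'$, the right-hand side is at most $d^{p'}\leq d^{\,s_0}$, with equality only if $\ell_{\sigma^k(w_0)}=1$ and $\delta(\sigma^k(w_0))=d$ for all $k$; combined with $\sum_{v\in|T|}(\delta(v)-1)=d-1$ this forces $|T|$ to consist of a single fixed point $w_0$ with $\delta(w_0)=d$, i.e.\ $T$ is the trivial schema, $\ell_{w_0}=1$, and $\g=(f_\mu\colon U_\mu'\to U_\mu)$ is the family of full-degree polynomial-like restrictions of the $f_\mu$; a full-degree polynomial-like restriction of a degree-$d$ rational map makes it affinely conjugate to a polynomial, which is precisely the excluded case. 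Outside that case the degree identity fails, giving a contradiction, so $s$ is not continuous; and any homeomorphic restriction $\chi|_{\mathcal{W}_0}\colon\mathcal{W}_0\to\mathcal{U}\cap\mathcal{C}(T)$ would have a continuous inverse, hence a continuous section, so none exists. As in the Main Theorem, the main obstacle is the bookkeeping that makes the transport of \condI--\condIII\ through $\psi_{\mu_1}$ precise — that the quadratic-like sub-renormalization and the two marked critical orbits of $P_1$ over $T$ descend to genuine holomorphic data for the iterate $f_\mu^{\,s_0}$ on a neighborhood in $\Lambda$ — together with confirming that the degenerate case in which the degree equality holds is exactly the stated exception.
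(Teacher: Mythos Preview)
Your approach is essentially the paper's: find $P_1\in\mathcal{U}\cap\mathcal{C}(T)$ satisfying \condIII\ (via the mapping-schema versions of Theorems~\ref{thm-mis-perturb} and~\ref{thm-para-purturb} with the trivial lamination), transport the double sequence through the assumed-continuous section $s$, run Theorem~\ref{thm-conti-mult} for the induced AFPL2MP, and finish with Theorem~\ref{thm-mult-semiconj}. The paper's own proof is two sentences that point to exactly this, so you have correctly unpacked it.

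Two points deserve correction. First, your appeal to Theorem~\ref{thm-quad-conti} to get continuity of $\chi_\h$ is not quite right: that theorem concerns the underlying degree-two AFPL, not the marked points, and the images $\psi_\mu(x(\mu)),\psi_\mu(y(\mu))$ need not vary continuously just because the quadratic straightening does. The correct reason condition~(v) holds is the one the paper uses in the Main Theorem: $\chi_\h(\mu)=(\hat P_\mu,x(P_\mu),y(P_\mu))$ is a continuous function of $P_\mu=\chi(\mu)$, and along your sequences $\chi(\mu_{n,m})=\chi(s(P_{n,m}))=P_{n,m}\to P_n=\chi(\mu_n)$ by $\chi\circ s=\mathrm{id}$. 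So the continuity you need comes from the section hypothesis, not from the degree-two miracle.

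Second, your degree comparison at the end is slightly off. In this generality one only has $\delta(v)\le d^{\ell_v}$ (the restriction $f_\mu^{\ell_v}|_{U_v'}$ sits inside a degree-$d^{\ell_v}$ map), not $\delta(v)\le d$, and there is no reason to assume $\sum_v(\delta(v)-1)=d-1$ for an arbitrary AFPL of iterates of a rational map. The inequality $\prod_k\delta(\sigma^k(w_0))\le d^{s_0}$ still follows from $\delta(v)\le d^{\ell_v}$, and equality forces each $f_\mu^{\ell_v}:U_v'\to U_{\sigma(v)}$ (for $v$ in the cycle) to be a full-degree polynomial-like restriction; a rational map with a full-degree polynomial-like restriction is M\"obius-conjugate to a polynomial, which is how one lands in the excepted case. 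Your conclusion is right, but the route to it needs this adjustment.
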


An (externally marked) AFPL over a mapping schema, its connectedness
locus and its straightening map are defined in the same way.

\begin{proof}
 Let $P_1 \in \mathcal{C}(T) \cap \mathcal{U}$ satisfy \condIII\ and let
 $f_1 = s(P_1) \in \mathcal{C}(g)$.
 Then the same argument as Theorem~\ref{thm-discont} can be applied to
 $s^{-1}$ to show the discontinuity.
\end{proof}

\begin{thm}
 Let $(f_\mu)_{\mu \in \Lambda}$ be an analytic family of
 transcendental entire maps of degree $d \ge 3$.
 Assume there exists an (externally marked) AFPL
 $\g = (g_\mu = f_\mu^{\ell_v}:U_v \to U_{\sigma(v)})_{v \in |T|})_{\mu
 \in \Lambda}$ over a 
 mapping schema $T=(|T|,\sigma,\delta)$ having a non-trivial critical
 relation.
 Let $\chi:\mathcal{C}(\g) \to \mathcal{C}(T)$ be the straightening map
 for $\g$. 
 Let $P_0 \in \mathcal{C}(T)$ be Misiurewicz and assume there exist
 a neighborhood $\mathcal{U}$ of $P_0$ and a continuous map
 $s:\mathcal{U} \cap \mathcal{C}(T) \to \mathcal{C}(\g)$
 such that $\chi \circ s$ is the identity.

 Then there exist some $P_1 \in \mathcal{U} \cap \mathcal{C}(T)$
 satisfying \condIII,
 a polynomial $g$, $\varphi_1$ and a transcendental entire map
 $\varphi_2$ such that 
 \begin{align*}
  P_1 \circ \varphi_1 &= \varphi_1 \circ g, &
  f_1 \circ \varphi_2 &= \varphi_w \circ g,
 \end{align*}
 where $f_1=s(P_1)$.
\end{thm}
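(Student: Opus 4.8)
The plan is to reproduce, step for step, the proof of Theorem~\ref{thm-rational} (equivalently of the Main Theorem, Theorem~\ref{thm-discont}), replacing the degree contradiction obtained at the very end by the intertwining conclusion. First I would produce the bad parameter: exactly as in the proof of Theorem~\ref{thm-rational}, the bifurcation constructions of Sections~\ref{sec-para-bif} and \ref{sec-mis-bif}, carried out inside the target family $\poly(T)$ (which has no renormalization constraint, so the constructions simplify; the total degree and the non-trivial critical relation of $T$ play the roles of $\poly(d)$ and of a lamination with non-trivial Fatou critical relation there), produce a map $P_1 \in \mathcal{U} \cap \mathcal{C}(T)$ satisfying \condIII, together with the attached data $p$, $N$, $\omega$, $\omega'$, a quadratic-like restriction $P_1^p : V' \to V$ hybrid equivalent to $z+z^2$, and, for each repelling periodic point $\alpha \in K(P_1^p;V',V)$, a double sequence $P_{n,m}\xrightarrow{m\to\infty} P_n \xrightarrow{n\to\infty} P_1$ in $\mathcal{U}\cap\mathcal{C}(T)$ realizing the required parabolic-implosion and geometric-limit picture. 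Put $f_1 = s(P_1)\in\mathcal{C}(\g)$.

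Next I would transport this picture through $s$. Since $s$ is continuous and $\chi\circ s=\mathrm{id}$, applying $s$ gives double sequences $s(P_{n,m})\xrightarrow{m\to\infty} s(P_n)\xrightarrow{n\to\infty} f_1$ in $\mathcal{C}(\g)\subset\Lambda$ with $\chi(s(P_\bullet))=P_\bullet$. As in the proof of the Main Theorem, form over a neighbourhood of $f_1$ in $\Lambda$ the AFPL2MP
\[
 \h = (f_\mu^{s} : U_\mu'' \to U_\mu,\ x(f_\mu),\ y(f_\mu))_\mu,
\]
where $x,y$ are the marked points carried by the critical orbits through $\omega,\omega'$ of the renormalization $g_{f_\mu}$; although the $f_\mu$ are transcendental, the restrictions $f_\mu^{s}:U_\mu''\to U_\mu$ are genuine polynomial-like maps of finite degree, so $\h$ is a legitimate AFPL2MP and Theorem~\ref{thm-conti-mult} applies verbatim. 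The straightening map $\chi_\h$ is continuous along these sequences: by continuity of $s$ and continuity of the quadratic-like straightening in the target family ($\chi\circ s=\mathrm{id}$ together with Theorem~\ref{thm-quad-conti} and Lemma~\ref{lem-conti-DH}), $\chi_\h(s(P_{n,m}))\to\chi_\h(s(P_n))$ and $\chi_\h(s(P_n))\to\chi_\h(s(P_1))$. Theorem~\ref{thm-conti-mult} then yields, for every repelling periodic $\alpha\in K(f_1^{p};V',V)$,
\[
 |\mult_{f_1^{p}}(\alpha)| = |\mult_{\tilde P_1}(\psi(\alpha))|,
\]
where $\psi$ is the hybrid conjugacy between $f_1^{p}:V'\to V$ and the corresponding quadratic-like restriction $\tilde P_1$ of a power of the renormalization (a polynomial). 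As this holds for all such $\alpha$, $\psi$ preserves multipliers.

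Finally I would conclude by rigidity and intertwining. Both $f_1^{p}:V'\to V$ and $\tilde P_1$ are tame: the only non-repelling cycle is the parabolic one and the free critical point lies in its basin, so they are NCP. Hence by Theorem~\ref{thm-sppu} the multiplier-preserving hybrid conjugacy $\psi$ upgrades to an analytic conjugacy between the two polynomial-like maps. Applying Theorem~\ref{thm-semiconj} to the transcendental entire map $f_1$ and the polynomial $\tilde P_1$ — which now carry analytically conjugate polynomial-like restrictions — produces a common model $g$ with polynomial-like restriction analytically conjugate to both, together with maps $\varphi_1,\varphi_2$ satisfying $\varphi_i\circ g = f_i\circ\varphi_i$; since $\tilde P_1$ (hence $P_1$) is a polynomial, $g$ and $\varphi_1$ are polynomials, while $\varphi_2$ is transcendental entire, matching $f_1$. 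This is exactly the asserted conclusion $P_1\circ\varphi_1=\varphi_1\circ g$ and $f_1\circ\varphi_2=\varphi_2\circ g$.

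The step I expect to be the main obstacle is the continuity verification in the second paragraph: we have only a continuous \emph{section} $s$, not an inverse of $\chi$, so one must check that all perturbed parameters $s(P_{n,m}),s(P_n)$ genuinely lie in $\mathcal{C}(\g)$ inside the domain of $s$, and that the quadratic-like family whose straightening controls $\chi_\h$ is precisely the one pinned down by $s$ — i.e.\ that $g_{s(P)}$ really is hybrid conjugate to $P$ for the markings used to define $\chi$. Keeping track of the several periods attached to the fibers $K_{f_1}(w_0)$ and $K_{f_1}(w_1)$, and passing to the iterate for which the parabolic point becomes a fixed point so that Theorem~\ref{thm-conti-mult} applies literally, is routine bookkeeping identical to that in the proof of the Main Theorem.
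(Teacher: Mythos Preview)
Your proposal is correct and follows essentially the same approach as the paper: the paper states that the proof is the same as that of Theorem~\ref{thm-rational} (i.e., run the Main Theorem argument with $s$ playing the role of $\chi_{\lambda_0}^{-1}$), with the sole difference that after applying Theorem~\ref{thm-semiconj} one cannot derive a degree contradiction in the transcendental case, so the intertwining relation itself becomes the conclusion. Your identification of the key steps (finding $P_1$ satisfying \condIII\ in $\mathcal{U}\cap\mathcal{C}(T)$, transporting via the continuous section $s$, applying Theorem~\ref{thm-conti-mult} and then Theorems~\ref{thm-sppu} and \ref{thm-semiconj}, and noting that $g,\varphi_1$ are polynomial while $\varphi_2$ is transcendental) matches the paper exactly.
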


The proof is the same as Theorem~\ref{thm-rational}.
The only difference is that we cannot get a contradiction
after applying Theorem~\ref{thm-semiconj},
because the degree of a transcendental entire map is infinite 
and we cannot exclude the case in the conclusion.
Note that it follows that $g$ and $\varphi_1$ are polynomials by
comparing the growth at the infinity (see \cite{Inou-semiconj}).


\bibliographystyle{amsalpha}
\bibliography{ref}

\end{document}